\documentclass[11pt]{amsart}
\usepackage{amssymb,mathrsfs}
\newtheorem{theorem}{Theorem}[section] 
\newtheorem{lemma}[theorem]{Lemma} 
\newtheorem{proposition}[theorem]{Proposition} 
\newtheorem{corollary}[theorem]{Corollary} 
\theoremstyle{definition}

\begin{document}

\title{A metric on $S^2 \times S^2$ with positive sectional curvature}
\author{Simon Brendle and Pei-Ken Hung}
\address{Columbia University \\ 2990 Broadway \\ New York NY 10027 \\ USA}
\address{University of Illinois \\ 1409 W. Green Street \\ Urbana IL 61801 \\ USA}
\thanks{The first author was supported by the National Science Foundation under grant DMS-2403981 and by the Simons Foundation. He acknowledges the hospitality of T\"ubingen University, where part of this work was carried out. The second author was partially supported by the INSURE Program at the University of Illinois Urbana-Champaign. A number of mathematicians have provided valuable comments on an earlier version of this paper. We thank all of them.}
\begin{abstract}
We construct a metric on $S^2 \times S^2$ with positive sectional curvature. Starting from the standard metric on $S^2 \times S^2$, we first perform a Cheeger deformation. The resulting metric has nonnegative sectional curvature. We refer to it as a Cheeger-M\"uter metric. We then consider a suitable third order perturbation of this Cheeger-M\"uter metric and show that the perturbed metrics have positive sectional curvature.

The proof requires various calculations, some of which have been carried out with the aid of \textsc{Mathematica}. The \textsc{Mathematica} code is attached to this submission.
\end{abstract}
\maketitle 

\section{Introduction} 

A central topic in differential geometry is to understand the interplay between curvature and topology of Riemannian manifolds. A natural question is which manifolds admit metrics of positive sectional curvature. The three-dimensional case is well understood by the famous work of Hamilton \cite{Hamilton}. Hamilton showed that a three-manifold that admits a metric of positive sectional curvature is diffeomorphic to a quotient of $S^3$ by standard isometries. Hsiang and Kleiner \cite{Hsiang-Kleiner} studied four-manifolds with positive sectional curvature and continuous symmetries. They showed that an orientable four-manifold with positive sectional curvature that admits a non-trivial Killing vector field must be homeomorphic to $S^4$ or $\mathbb{CP}^2$. In particular, a metric on $S^2 \times S^2$ with positive sectional curvature cannot admit a non-trivial Killing vector field.

The following is the main result of this paper.

\begin{theorem}
\label{main.theorem}
There exists a metric on $S^2 \times S^2$ with positive sectional curvature. 
\end{theorem}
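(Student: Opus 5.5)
The plan follows the strategy announced in the abstract. Write $S^2 \times S^2 = (SO(3)\times SO(3))/(SO(2)\times SO(2))$ with the product of round metrics $g_0$. Let $G = SO(3)$ act diagonally by isometries, and perform a Cheeger deformation. Concretely, consider $(S^2\times S^2)\times G$ with metric $g_0 + t^{-1} Q$, where $Q$ is a bi-invariant metric on $G$. Take the quotient by the free $G$-action $h\cdot(p,k) = (hp, hk)$. By O'Neill's formula the quotient metric $g_t$, the Cheeger--M\"uter metric, has nonnegative sectional curvature. The first step is to compute explicitly the set of zero-curvature planes of $g_t$. A plane spanned by horizontal lifts $X,Y$ has zero curvature iff three things hold. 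First, its projections to the two factors give zero-curvature planes for $g_0$; this forces the plane to be mixed, $X=(x_1,x_2)$, $Y=(y_1,y_2)$, with $x_1\wedge y_1=0$ and $x_2 \wedge y_2 = 0$. Second, the $Q$-component $[\,\cdot,\cdot\,]$ terms vanish. Third, the O'Neill $A$-tensor vanishes. I expect the outcome to be that, on an open dense set, the deformation kills most flat planes. What remains is a thin set of flat planes, namely planes of the form $\mathrm{span}\{(u,0),(0,v)\}$ at points where the $G$-orbit data satisfy a degeneracy condition, for instance near the diagonal/antidiagonal where the orbits are lower dimensional. This set should be parametrized explicitly, as a smooth family $\mathcal{Z}$ inside the Grassmann bundle.

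Next I will perturb $g_t$ to $g_s = g_t + s\,h$ and expand the sectional curvature as a function of the plane $\sigma$ near $\mathcal{Z}$. Since $K_{g_t}\ge 0$ vanishes on $\mathcal{Z}$, the first variation $\dot K(\sigma)$ restricted to $\mathcal{Z}$ must be $\ge 0$. This alone cannot work globally: by the Hsiang--Kleiner remark we must destroy all symmetry, and by Weinstein/Synge-type obstructions the first-order terms tend to integrate to zero. The paper's ``third order'' suggests the following structure. Choose $h$ so that $\dot K$ vanishes on $\mathcal{Z}$ but the second variation is semidefinite, and arrange a third-order term that is positive where the lower-order ones degenerate. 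Concretely, I would take $g_s = g_t + s\,h_1 + s^2 h_2 + s^3 h_3$ with $h_i$ symmetric tensors built from $G$-invariant functions times non-invariant spherical harmonics (e.g.\ degree-one coordinate functions $x_i,y_j$ on the factors). Then I would expand
\[
K_{g_s}(\sigma') = K_{g_t}(\sigma') + s\,K_1(\sigma') + s^2 K_2(\sigma') + s^3 K_3(\sigma') + O(s^4)
\]
for $\sigma'$ in a neighborhood of $\mathcal{Z}$ scaled appropriately in $s$. I would then verify positivity by completing squares. The quadratic form $K_{g_t}$ is nondegenerate transversally to $\mathcal{Z}$, and it absorbs the mixed terms $s K_1$ via an inequality $K_{g_t}+sK_1 \ge -Cs^2 (\ldots)$. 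What remains on $\mathcal{Z}$ is a polynomial identity requiring $K_3|_{\mathcal{Z}} - (\text{correction from }K_1,K_2) >0$.

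Away from a neighborhood of $\mathcal{Z}$ positivity is automatic for small $s$ by compactness of the Grassmann bundle. So the proof reduces to a local analysis near $\mathcal{Z}$, uniform in $s$. The main obstacle is this step. One must choose $h_1,h_2,h_3$ so that the effective cubic coefficient is strictly positive on all of $\mathcal{Z}$, including the singular orbits of $G$, where $\mathcal{Z}$ may fail to be a manifold and the transversal Hessian degenerates. My first attempt would be a symbolic computation (as the authors indicate, with \textsc{Mathematica}) of the expansion coefficients in adapted orthonormal frames. I would reduce the positivity of the cubic term to a finite-dimensional polynomial inequality on $\mathcal{Z}$, and check it by an explicit sum-of-squares decomposition. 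Near the singular orbits I would handle the problem by a separate rescaled blow-up argument.
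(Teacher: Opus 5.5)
Your outline follows the paper's architecture: Cheeger deformation, a cubic perturbation, localization near the flat planes, and compactness elsewhere. However, it misdescribes the flat set and, more seriously, supplies no mechanism for the decisive third-order step.

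\textbf{The flat set.} The flat planes of $g_{\mathrm{CM},t}$ are not confined to degenerate orbits. You have their form right, but every point of $M_{\mathrm{nondeg}}$ carries exactly one. It is the plane $\mathrm{span}\{(b,0),(0,b)\}^\perp$, $b\perp p_1,p_2$, tangent to the totally geodesic flat torus $\Gamma\times\Gamma$ through that point, and each point of $\Delta_\pm$ carries a circle of such planes. Thus $\mathcal{Z}$ is a compact smooth $4$-manifold projecting onto all of $M$. The transversal nondegeneracy $u^{(0)}\ge\sigma\,d(\cdot,\mathcal{Z})^2$ holds everywhere (Corollary \ref{lower.bound.for.sectional.curvature.of.Mueter.metrics.2}, from Ziller's Proposition 1.3).

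So the trouble you anticipate at singular orbits (a non-manifold $\mathcal{Z}$, a degenerate Hessian) does not arise. Only the frame $m_i$ degenerates there. The paper handles this not by a blow-up but by working with the intrinsic coefficients $U^{(2)},U^{(3)}$ of $\inf_w u(x,w,s)$ (Lemma \ref{asymptotic.expansion.of.U}), which are smooth on all of $\mathcal{Z}$. The parametrized quantities satisfy $V^{(2)}=\Upsilon U^{(2)}$ always, and $V^{(3)}=\Upsilon U^{(3)}$ wherever $V^{(2)}=dV^{(2)}=0$ (Proposition \ref{invariance}). Bounds proved on the dense set $\mathcal{Z}_{\mathrm{nondeg}}$ then pass to the closure (Theorem \ref{positivity.manifold.version}).

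\textbf{The second-order condition.} After minimizing over the transversal variable, the $s^2$ coefficient is $U^{(2)}=u^{(2)}-\frac12(u^{(1)}_w)^T(u^{(0)}_{ww})^{-1}u^{(1)}_w$, and it must itself be nonnegative on $\mathcal{Z}$. Your completing-squares step jumps from absorbing the first-order term directly to a cubic condition and skips this. In the paper, $h^{(1)}$ is chosen so that $u^{(1)}|_{\mathcal{Z}}=0$. This is forced, since on each flat torus the first variation is a sum of second derivatives along parallel fields and integrates to zero. Then $h^{(2)}$ is tuned so that $V^{(2)}\ge\delta(1-\langle q_3,e_z\rangle^2)$, which vanishes only on the tangent planes of one torus $\Sigma$.

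\textbf{The missing third-order mechanism.} The cubic coefficient matters only on $\Sigma$, and it is never checked pointwise. Adding $h^{(3)}=6\chi g$ shifts it by $-\Delta_\Sigma\chi$, so solving a Poisson equation on $\Sigma$ makes it equal to its average $\mu$. Because the sign of $s$ is at your disposal, one only needs $\mu\neq 0$. The paper obtains this from the nonzero $\lambda_c\lambda_d^2$ coefficient $\pi^2/(18\sqrt{3})$ of $\int_\Sigma V^{(3)}$.

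Your plan instead asks for the effective cubic coefficient to be strictly positive on all of $\mathcal{Z}$, certified by a sum-of-squares decomposition. This asks for more than is needed. It also treats a differential expression in $h_1,h_2,h_3$ along $\mathcal{Z}$ as a finite-dimensional polynomial inequality. What is missing is:
\begin{itemize}
\item the reduction of the cubic condition to the zero set of $U^{(2)}$, with the quantitative bounds $U^{(2)}\ge\kappa\rho$ and $U^{(3)}\ge\kappa$ on $\{\rho=0\}$;
\item the absorption of the cubic coefficient up to a Laplacian on $\Sigma$;
\item the free choice of the sign of $s$.
\end{itemize}
Without these, nothing in your argument produces the required sign, so it does not close.
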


In a fundamental paper, Cheeger \cite{Cheeger} introduced new tools for constructing metrics with nonnegative sectional curvature via group actions. In particular, for each $t > 0$, Cheeger's construction gives a metric $g_{\text{\rm CM},t}$ on $S^2 \times S^2$ with nonnegative sectional curvature (see also \cite{Bourguignon}). The resulting metrics were studied further by M\"uter \cite{Mueter}. We refer to them as Cheeger-M\"uter metrics. 

To fix notation, we put $M = S^2 \times S^2$. We denote by 
\[\Delta_+ = \{(p_1,p_2) \in S^2 \times S^2: p_1 = p_2\}\] 
the diagonal and by 
\[\Delta_- = \{(p_1,p_2) \in S^2 \times S^2: p_1 = -p_2\}\] 
the anti-diagonal. Moreover, we define 
\[M_{\text{\rm nondeg}} = M \setminus (\Delta_+ \cup \Delta_-) = \{(p_1,p_2) \in S^2 \times S^2: p_1 \neq \pm p_2\}.\] 
The Cheeger-M\"uter metric $g_{\text{\rm CM},t}$ has the important property that, at each point in $M_{\text{\rm nondeg}}$, there is exactly one two-plane with zero sectional curvature, and all other two-planes have strictly positive sectional curvature.

In the next step, we consider the metric $g = \frac{1}{2} \, g_{\text{\rm CM},1}$. We construct a third order perturbation of the metric of the form 
\[\tilde{g}_s = g + s \, h^{(1)} + s^2 \, h^{(2)} + s^3 \, h^{(3)},\] 
where $h^{(1)}, h^{(2)}, h^{(3)}$ are suitably chosen symmetric $(0,2)$-tensors on $M$ and $s$ is a real number close to $0$. 

Let us fix a point in $M_{\text{\rm nondeg}}$. For each $s$, we consider the minimum sectional curvature of the metric $\tilde{g}_s$ at the given point, where the minimum is taken over all two-planes at the given point. We show that the first variation of the minimum sectional curvature is equal to $0$ (see Proposition \ref{first.order.change.in.sectional.curvature} below). Moreover, we show that the second variation of the minimum sectional curvature is nonnegative everywhere and is strictly positive away from a special 2D torus (see Proposition \ref{second.order.change.in.sectional.curvature}). Finally, we show that, along this special torus, the third variation of the minimum sectional curvature is equal to a non-zero constant $\mu$ (see Proposition \ref{third.order.change.in.sectional.curvature.pointwise.version}). 

We now distinguish two cases, depending on the sign of $\mu$. If $\mu$ is positive, then the minimum sectional curvature of the metric $\tilde{g}_s$ at any given point in $M_{\text{\rm nondeg}}$ is strictly positive if $s > 0$ is sufficiently close to $0$. Similarly, if $\mu$ is negative, then the minimum sectional curvature of the metric $\tilde{g}_s$ at any given point in $M_{\text{\rm nondeg}}$ is strictly positive if $s < 0$ is sufficiently close to $0$. In order to analyze the behavior near $\Delta_+$ and $\Delta_-$, we argue by continuity, exploiting the fact that $M_{\text{\rm nondeg}}$ is a dense subset of $M$. In this step, we use Theorem \ref{positivity.manifold.version}.

Finally, we note that perturbations of product metrics were studied in the work of Bourguignon, Deschamps, and Sentenac \cite{Bourguignon-Deschamps-Sentenac}.

\section{An abstract framework}\label{framework}

Let $m$ and $n$ be positive integers. For each $r>0$, we denote by $B_r^m$ the open ball of radius $r$ in $\mathbb{R}^m$. Similarly, we denote by $B_r^n$ the open ball of radius $r$ in $\mathbb{R}^n$. 

Let $r_0$ and $s_0$ be positive real numbers, and let $u: \bar{B}_{r_0}^m \times \bar{B}_{r_0}^n \times [-s_0,s_0] \to \mathbb{R}$ be a smooth function. We assume that $u^{(0)},u^{(1)},u^{(2)},u^{(3)}: \bar{B}_{r_0}^m \times \bar{B}_{r_0}^n \to \mathbb{R}$ are smooth functions such that 
\[u(x,w,s) = u^{(0)}(x,w) + s \, u^{(1)}(x,w) + s^2 \, u^{(2)}(x,w) + s^3 \, u^{(3)}(x,w) + O(s^4)\] 
for $x \in \bar{B}_{r_0}^m$, $w \in \bar{B}_{r_0}^n$, and $s \in [-s_0,s_0]$. We assume that the function $u^{(0)}$ satisfies $u^{(0)}(x,0) = 0$ and $u^{(0)}(x,w) \geq \sigma \, ((w^1)^2+\hdots+(w^n)^2)$ for all $x \in \bar{B}_{r_0}^m$ and $w \in \bar{B}_{r_0}^n$, where $\sigma$ is a positive constant. In particular, for each point $x \in \bar{B}_{r_0}^m$, we have $u^{(0)}_{w^\alpha}(x,0) = 0$ and the matrix $(u^{(0)}_{w^\alpha w^\beta}(x,0))_{1 \leq \alpha,\beta \leq n}$ is positive definite. Finally, we assume that the function $u^{(1)}$ satisfies $u^{(1)}(x,0) = 0$ for all $x \in \bar{B}_{r_0}^m$.

For $x \in \bar{B}_{r_0}^m$ and $s \in [-s_0,s_0]$, we define 
\[U(x,s) = \inf_{w \in \bar{B}_{r_0}^n} u(x,w,s).\] 
The function $U$ is smooth if $|s|$ is sufficiently small. 

For each point $x \in \bar{B}_{r_0}^m$, we define $\bar{w}(x) \in \mathbb{R}^n$ by 
\begin{equation} 
\label{definition.of.bar.w}
\sum_{\beta=1}^n u^{(0)}_{w^\alpha w^\beta}(x,0) \, \bar{w}^\beta(x) = -u^{(1)}_{w^\alpha}(x,0). 
\end{equation}
Moreover, for each point $x \in \bar{B}_{r_0}^m$, we define 
\[U^{(2)}(x) = u^{(2)}(x,0) + \frac{1}{2} \sum_{\alpha=1}^n u^{(1)}_{w^\alpha}(x,0) \, \bar{w}^\alpha(x)\] 
and 
\begin{align*} 
U^{(3)}(x) 
&= u^{(3)}(x,0) + \sum_{\alpha=1}^n u^{(2)}_{w^\alpha}(x,0) \, \bar{w}^\alpha(x) + \frac{1}{2} \sum_{\alpha,\beta=1}^n u^{(1)}_{w^\alpha w^\beta}(x,0) \, \bar{w}^\alpha(x) \, \bar{w}^\beta(x) \\ 
&+ \frac{1}{6} \sum_{\alpha,\beta,\gamma=1}^n u^{(0)}_{w^\alpha w^\beta w^\gamma}(x,0) \, \bar{w}^\alpha(x) \, \bar{w}^\beta(x) \, \bar{w}^\gamma(x). 
\end{align*} 

\begin{lemma}
\label{asymptotic.expansion.of.U}
The function $U$ satisfies the asymptotic expansion 
\[U(x,s) = s^2 \, U^{(2)}(x) + s^3 \, U^{(3)}(x) + O(s^4)\] 
for $|s|$ sufficiently small.
\end{lemma}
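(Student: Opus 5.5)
The plan is to locate the minimizer of $w \mapsto u(x,w,s)$ via the implicit function theorem and then Taylor expand $u$ along it.

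\emph{Localizing the minimizer.} Since $u^{(0)}(x,w) \geq \sigma |w|^2$ and $u = u^{(0)} + O(s)$ uniformly, any minimizer must lie in a small ball around $0$. Indeed, $u(x,w,s) \geq \sigma|w|^2 - C|s|$, while $u(x,0,s) = u^{(0)}(x,0) + s\,u^{(1)}(x,0) + O(s^2) = O(s^2)$. Hence every minimizer $w$ satisfies $|w|^2 \leq C|s|/\sigma$, so $|w| \leq C|s|^{1/2}$. In particular, for small $|s|$ the minimizer is interior.

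\emph{Critical point as a function of $s$.} Consider the equation $u_{w^\alpha}(x,w,s) = 0$. At $(w,s)=(0,0)$ it holds, and the Jacobian $u^{(0)}_{w^\alpha w^\beta}(x,0)$ is positive definite, with a lower bound uniform in $x$ (it is at least $2\sigma$). The implicit function theorem, applied uniformly in $x \in \bar B^m_{r_0}$, gives a smooth solution $w(x,s)$ with $w(x,0)=0$ for $|s|\leq s_1$, and this is the unique critical point in a fixed neighborhood $|w|\leq \rho$. Since $u_{ww}$ stays positive definite on this neighborhood for small $s$, the function is strictly convex there. Combined with the localization above, this shows that $U(x,s) = u(x,w(x,s),s)$, which also gives the smoothness of $U$.

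Differentiating $u^{(0)}_{w^\alpha}(x,w) + s\,u^{(1)}_{w^\alpha}(x,w)+\dots = 0$ at $s=0$ yields $\partial_s w(x,0) = \bar w(x)$ by \eqref{definition.of.bar.w}. So $w(x,s) = s\bar w + s^2 \hat w + O(s^3)$ for some smooth $\hat w$.

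\emph{Expansion.} Substitute into the Taylor expansion of each $u^{(k)}$ around $w=0$:
\[ u^{(0)}(x,w) = \tfrac12 u^{(0)}_{\alpha\beta} w^\alpha w^\beta + \tfrac16 u^{(0)}_{\alpha\beta\gamma}w^\alpha w^\beta w^\gamma + O(|w|^4), \]
using that $u^{(0)}(x,0)=0$ and $u^{(0)}_w(x,0)=0$. Similarly
\[ u^{(1)}(x,w) = u^{(1)}_\alpha w^\alpha + \tfrac12 u^{(1)}_{\alpha\beta} w^\alpha w^\beta + O(|w|^3), \]
using $u^{(1)}(x,0)=0$, and $u^{(2)} = u^{(2)}(x,0) + u^{(2)}_\alpha w^\alpha + O(|w|^2)$, $u^{(3)} = u^{(3)}(x,0)+O(|w|)$. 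Collecting powers of $s$:

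The $s^0$ and $s^1$ terms vanish.

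The $s^2$ coefficient is $\tfrac12 u^{(0)}_{\alpha\beta}\bar w^\alpha\bar w^\beta + u^{(1)}_\alpha \bar w^\alpha + u^{(2)}(x,0)$. By \eqref{definition.of.bar.w}, the first term equals $-\tfrac12 u^{(1)}_\alpha\bar w^\alpha$, which gives $U^{(2)}$.

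The $s^3$ coefficient contains the terms $u^{(0)}_{\alpha\beta}\bar w^\alpha \hat w^\beta + u^{(1)}_\alpha \hat w^\alpha$, which cancel by \eqref{definition.of.bar.w}. This cancellation is the envelope-theorem phenomenon: the first-order change of the critical point does not contribute. What remains is $\tfrac16 u^{(0)}_{\alpha\beta\gamma}\bar w\bar w\bar w + \tfrac12 u^{(1)}_{\alpha\beta}\bar w\bar w + u^{(2)}_\alpha\bar w^\alpha + u^{(3)}(x,0) = U^{(3)}$.

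The remainders are $O(s^4)$ uniformly in $x$ because all derivatives are bounded on the compact set.

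\emph{Main obstacle.} The only delicate point is justifying that the global infimum over $\bar B^n_{r_0}$ equals the value at the implicit-function critical point, uniformly in $x$. The coercivity bound $\sigma|w|^2$ together with local strict convexity handles this, so I expect it to be routine. The rest is bookkeeping, with the one substantive step being the cancellation of the $\hat w$ terms.
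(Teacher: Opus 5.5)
Your proposal is correct and follows essentially the same route as the paper: you obtain the critical point $w^*(x,s) = s\,\bar w(x) + s^2\,\hat w(x) + O(s^3)$ from the implicit function theorem, identify $U(x,s)$ with $u(x,w^*(x,s),s)$, and expand, with the $\hat w$ terms cancelling by \eqref{definition.of.bar.w}. Your coercivity-plus-local-convexity argument fills in a step the paper only asserts, namely that for small $|s|$ the infimum over $\bar{B}^n_{r_0}$ is attained at this critical point. You also correctly note that the explicit formula for $\hat w$ written in the paper is not actually needed, since those terms cancel.
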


\textbf{Proof.}
Throughout the proof, we assume that $|s|$ is sufficiently small. By the implicit function theorem, we can find a smooth function $w^*(x,s)$ such that 
\[u_{w^\alpha}(x,w,s) \big |_{w=w^*(x,s)} = 0\] 
and 
\[w^*(x,s) = s \, \bar{w}(x) + s^2 \, \hat{w}(x) + O(s^3),\] 
where $\hat{w}(x) \in \mathbb{R}^n$ is defined by 
\begin{align*} 
\sum_{\beta=1}^n u^{(0)}_{w^\alpha w^\beta}(x,0) \, \hat{w}^\beta(x) 
&= -u^{(2)}_{w^\alpha}(x,0) - \sum_{\beta=1}^n u^{(1)}_{w^\alpha w^\beta}(x,0) \, \bar{w}^\beta(x) \\ 
&- \frac{1}{2} \sum_{\beta,\gamma=1}^n u^{(0)}_{w^\alpha w^\beta w^\gamma}(x,0) \, \bar{w}^\beta(x) \, \bar{w}^\gamma(x). 
\end{align*} 
With this understood, 
\[U(x,s) = \inf_{w \in \bar{B}_{r_0}^n} u(x,w,s) = u(x,w^*(x,s),s)\] 
if $|s|$ is sufficiently small. Moreover, 
\begin{align*} 
&u(x,w^*(x,s),s) \\ 
&= s^2 \, u^{(2)}(x,0) + s^2 \sum_{\alpha=1}^n u^{(1)}_{w^\alpha}(x,0) \, \bar{w}^\alpha(x) + \frac{1}{2} \, s^2 \sum_{\alpha,\beta=1}^n u^{(0)}_{w^\alpha w^\beta}(x,0) \, \bar{w}^\alpha(x) \, \bar{w}^\beta(x) \\ 
&+ s^3 \, u^{(3)}(x,0) + s^3 \sum_{\alpha=1}^n u^{(2)}_{w^\alpha}(x,0) \, \bar{w}^\alpha(x) + s^3 \sum_{\alpha=1}^n u^{(1)}_{w^\alpha}(x,0) \, \hat{w}^\alpha(x) \\ 
&+ \frac{1}{2} \, s^3 \sum_{\alpha,\beta=1}^n u^{(1)}_{w^\alpha w^\beta}(x,0) \, \bar{w}^\alpha(x) \, \bar{w}^\beta(x) + s^3 \sum_{\alpha,\beta=1}^n u^{(0)}_{w^\alpha w^\beta}(x,0) \, \bar{w}^\alpha(x) \, \hat{w}^\beta(x) \\ 
&+ \frac{1}{6} \, s^3 \sum_{\alpha,\beta,\gamma=1}^n u^{(0)}_{w^\alpha w^\beta w^\gamma}(x,0) \, \bar{w}^\alpha(x) \, \bar{w}^\beta(x) \, \bar{w}^\gamma(x) + O(s^4). 
\end{align*}
Using (\ref{definition.of.bar.w}), we obtain 
\begin{align*} 
&u(x,w^*(x,s),s) \\ 
&= s^2 \, u^{(2)}(x,0) + \frac{1}{2} \, s^2 \sum_{\alpha=1}^n u^{(1)}_{w^\alpha}(x,0) \, \bar{w}^\alpha(x) \\ 
&+ s^3 \, u^{(3)}(x,0) + s^3 \sum_{\alpha=1}^n u^{(2)}_{w^\alpha}(x,0) \, \bar{w}^\alpha(x) + \frac{1}{2} \, s^3 \sum_{\alpha,\beta=1}^n u^{(1)}_{w^\alpha w^\beta}(x,0) \, \bar{w}^\alpha(x) \, \bar{w}^\beta(x) \\ 
&+ \frac{1}{6} \, s^3 \sum_{\alpha,\beta,\gamma=1}^n u^{(0)}_{w^\alpha w^\beta w^\gamma}(x,0) \, \bar{w}^\alpha(x) \, \bar{w}^\beta(x) \, \bar{w}^\gamma(x) + O(s^4). 
\end{align*}
Putting these facts together, the assertion follows. This completes the proof of Lemma \ref{asymptotic.expansion.of.U}. \\

\begin{lemma}
\label{derivative.of.U2}
We have 
\begin{align*}
U^{(2)}_{x^i}(x) 
&= u^{(2)}_{x^i}(x,0) + \sum_{\alpha=1}^n u^{(1)}_{x^i w^\alpha}(x,0) \, \bar{w}^\alpha(x) \\ 
&+ \frac{1}{2} \sum_{\alpha,\beta=1}^n u^{(0)}_{x^i w^\alpha w^\beta}(x,0) \, \bar{w}^\alpha(x) \, \bar{w}^\beta(x). 
\end{align*}
\end{lemma}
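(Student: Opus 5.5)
We differentiate the defining formula for $U^{(2)}$ directly and then use the equation (\ref{definition.of.bar.w}) for $\bar{w}$, together with its $x^i$-derivative, to eliminate the derivatives of $\bar{w}$. The structure is the usual envelope-type argument: $\frac{1}{2} \sum_\alpha u^{(1)}_{w^\alpha}(x,0) \, \bar{w}^\alpha(x)$ is the critical value of a quadratic form in $w$, so the derivative of $\bar{w}$ should drop out.

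First we rewrite $U^{(2)}$ in a form whose derivative is easier to handle. By (\ref{definition.of.bar.w}),
\[U^{(2)}(x) = u^{(2)}(x,0) + \sum_{\alpha=1}^n u^{(1)}_{w^\alpha}(x,0) \, \bar{w}^\alpha(x) + \frac{1}{2} \sum_{\alpha,\beta=1}^n u^{(0)}_{w^\alpha w^\beta}(x,0) \, \bar{w}^\alpha(x) \, \bar{w}^\beta(x).\]
Indeed, the quadratic term equals $-\frac{1}{2} \sum_\alpha u^{(1)}_{w^\alpha}(x,0) \, \bar{w}^\alpha(x)$. Thus $U^{(2)}(x) = u^{(2)}(x,0) + Q(x,\bar{w}(x))$, where
\[Q(x,w) = \sum_\alpha u^{(1)}_{w^\alpha}(x,0) \, w^\alpha + \frac{1}{2} \sum_{\alpha,\beta} u^{(0)}_{w^\alpha w^\beta}(x,0) \, w^\alpha w^\beta.\]
The defining equation (\ref{definition.of.bar.w}) says exactly that $\partial_{w^\alpha} Q(x,w)|_{w=\bar{w}(x)} = 0$. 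Here $\bar{w}$ is smooth in $x$ because the Hessian matrix is positive definite and depends smoothly on $x$.

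Next we differentiate with the chain rule:
\[U^{(2)}_{x^i}(x) = u^{(2)}_{x^i}(x,0) + (\partial_{x^i} Q)(x,\bar{w}(x)) + \sum_\alpha (\partial_{w^\alpha} Q)(x,\bar{w}(x)) \, \bar{w}^\alpha_{x^i}(x).\]
The last sum vanishes by criticality. The partial derivative $\partial_{x^i} Q$ at fixed $w$ is
\[\sum_\alpha u^{(1)}_{x^i w^\alpha}(x,0) \, \bar{w}^\alpha(x) + \frac{1}{2} \sum_{\alpha,\beta} u^{(0)}_{x^i w^\alpha w^\beta}(x,0) \, \bar{w}^\alpha(x) \, \bar{w}^\beta(x),\]
which is precisely the asserted formula.

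There is no real obstacle. The only point requiring care is the initial rewriting, which adds and subtracts the quadratic term so that the critical-point structure becomes visible. Differentiating the original expression naively would produce $\bar{w}_{x^i}$ terms, and these must be cancelled using the $x^i$-derivative of (\ref{definition.of.bar.w}); the rewriting makes that cancellation automatic.
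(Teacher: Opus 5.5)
Your proof is correct, but it organizes the computation differently from the paper. The paper differentiates $U^{(2)}(x)=u^{(2)}(x,0)+\frac{1}{2}\sum_\alpha u^{(1)}_{w^\alpha}(x,0)\,\bar w^\alpha(x)$ as written. It converts the resulting term $\frac{1}{2}\sum_\alpha u^{(1)}_{w^\alpha}(x,0)\,\partial_{x^i}\bar w^\alpha(x)$ into $-\frac{1}{2}\sum_{\alpha,\beta}u^{(0)}_{w^\alpha w^\beta}(x,0)\,\bar w^\beta(x)\,\partial_{x^i}\bar w^\alpha(x)$ using (\ref{definition.of.bar.w}), and then eliminates $\partial_{x^i}\bar w$ with the differentiated system (\ref{derivative.of.bar.w}).

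You instead first add and subtract the quadratic term. This exhibits $U^{(2)}(x)-u^{(2)}(x,0)$ as the value of the strictly convex quadratic $Q(x,\cdot)$ at its minimizer $\bar w(x)$. The $\partial_{x^i}\bar w$ terms then vanish by criticality, so (\ref{derivative.of.bar.w}) is never needed; you only use that $\bar w$ is differentiable. The two computations perform the same cancellation, since (\ref{derivative.of.bar.w}) is just the $x^i$-derivative of your criticality condition.

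Your version is shorter and explains the form of the answer. The expression $u^{(2)}(x,0)+Q(x,\bar w(x))$ is exactly the unsimplified $s^2$-coefficient in the proof of Lemma \ref{asymptotic.expansion.of.U}. So the lemma is the envelope theorem for $U(x,s)=\inf_w u(x,w,s)$, read off at order $s^2$. The paper's version, by contrast, is a mechanical check that needs no reformulation.
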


\textbf{Proof.}
Differentiating the identity (\ref{definition.of.bar.w}) with respect to $x^i$ gives 
\begin{equation} 
\label{derivative.of.bar.w}
\sum_{\beta=1}^n u^{(0)}_{w^\alpha w^\beta}(x,0) \, \partial_{x^i} \bar{w}^\beta(x) = -u^{(1)}_{x^i w^\alpha}(x,0) - \sum_{\beta=1}^n u^{(0)}_{x^i w^\alpha w^\beta}(x,0) \, \bar{w}^\beta(x). 
\end{equation}
Using (\ref{definition.of.bar.w}) and (\ref{derivative.of.bar.w}), we obtain 
\begin{align*}
U^{(2)}_{x^i}(x) 
&= u^{(2)}_{x^i}(x,0) + \frac{1}{2} \sum_{\alpha=1}^n u^{(1)}_{x^i w^\alpha}(x,0) \, \bar{w}^\alpha(x) \\ 
&+ \frac{1}{2} \sum_{\alpha=1}^n u^{(1)}_{w^\alpha}(x,0) \, \partial_{x^i} \bar{w}^\alpha(x) \\ 
&= u^{(2)}_{x^i}(x,0) + \frac{1}{2} \sum_{\alpha=1}^n u^{(1)}_{x^i w^\alpha}(x,0) \, \bar{w}^\alpha(x) \\ 
&- \frac{1}{2} \sum_{\alpha,\beta=1}^n u^{(0)}_{w^\alpha w^\beta}(x,0) \, \bar{w}^\beta(x) \, \partial_{x^i} \bar{w}^\alpha(x) \\ 
&= u^{(2)}_{x^i}(x,0) + \sum_{\alpha=1}^n u^{(1)}_{x^i w^\alpha}(x,0) \, \bar{w}^\alpha(x) \\ 
&+ \frac{1}{2} \sum_{\alpha,\beta=1}^n u^{(0)}_{x^i w^\alpha w^\beta}(x,0) \, \bar{w}^\alpha(x) \, \bar{w}^\beta(x). 
\end{align*}
This completes the proof of Lemma \ref{derivative.of.U2}. \\

In the next step, we fix an open set $\Omega \subset B_{r_0}^m$ and a positive real number $r_1 \leq r_0$. Suppose that $\Upsilon: \Omega \times B_{r_1}^n \to \mathbb{R}$ is a positive smooth function. Suppose further that $\varphi: \Omega \times B_{r_1}^n \to B_{r_0}^m$ and $\psi: \Omega \times B_{r_1}^n \to B_{r_0}^n$ are smooth maps such that $\varphi(y,0) = y$ and $\psi(y,0) = 0$ for each $y \in \Omega$. Moreover, we assume that the matrix $(\psi^\beta_{z^\alpha}(y,0))_{1 \leq \alpha,\beta \leq n}$ is invertible for each point $y \in \Omega$. 

We define smooth functions $v^{(0)},v^{(1)},v^{(2)},v^{(3)}: \Omega \times B_{r_1}^n \to \mathbb{R}$ by 
\begin{align*} 
v^{(0)}(y,z) &= \Upsilon(y,z) \, u^{(0)}(\varphi(y,z),\psi(y,z)), \\ 
v^{(1)}(y,z) &= \Upsilon(y,z) \, u^{(1)}(\varphi(y,z),\psi(y,z)), \\ 
v^{(2)}(y,z) &= \Upsilon(y,z) \, u^{(2)}(\varphi(y,z),\psi(y,z)), \\ 
v^{(3)}(y,z) &= \Upsilon(y,z) \, u^{(3)}(\varphi(y,z),\psi(y,z)) 
\end{align*} 
for $y \in \Omega$ and $z \in B_{r_1}^n$. 

\begin{lemma}
\label{v0}
The function $v^{(0)}$ satisfies $v^{(0)}(y,0) = 0$, $v^{(0)}_{z^\alpha}(y,0) = 0$, and 
\[v^{(0)}_{z^\alpha z^\beta}(y,0) = \sum_{\gamma,\delta=1}^n \Upsilon(y,0) \, u^{(0)}_{w^\gamma w^\delta}(y,0) \, \psi^\gamma_{z^\alpha}(y,0) \, \psi^\delta_{z^\beta}(y,0)\] 
for each point $y \in \Omega$. Moreover, for each point $y \in \Omega$, we have 
\begin{align*} 
v^{(0)}_{z^\alpha z^\beta z^\gamma}(y,0) 
&= \sum_{\delta,\zeta,\eta=1}^n \Upsilon(y,0) \, u^{(0)}_{w^\delta w^\zeta w^\eta}(y,0) \, \psi^\delta_{z^\alpha}(y,0) \, \psi^\zeta_{z^\beta}(y,0) \, \psi^\eta_{z^\gamma}(y,0) \\ 
&+ T_{\alpha\beta\gamma} + T_{\beta\gamma\alpha} + T_{\gamma\alpha\beta}, 
\end{align*}
where 
\begin{align*} 
T_{\alpha\beta\gamma} 
&= \sum_{\delta,\zeta=1}^n \Upsilon(y,0) \, u^{(0)}_{w^\delta w^\zeta}(y,0) \, \psi^\delta_{z^\gamma}(y,0) \, \psi^\zeta_{z^\alpha z^\beta}(y,0) \\ 
&+ \sum_{i=1}^m \sum_{\delta,\zeta=1}^n \Upsilon(y,0) \, u^{(0)}_{x^i w^\delta w^\zeta}(y,0) \, \varphi^i_{z^\gamma}(y,0) \, \psi^\delta_{z^\alpha}(y,0) \, \psi^\zeta_{z^\beta}(y,0) \\ 
&+ \sum_{\delta,\zeta=1}^n \Upsilon_{z^\gamma}(y,0) \, u^{(0)}_{w^\delta w^\zeta}(y,0) \, \psi^\delta_{z^\alpha}(y,0) \, \psi^\zeta_{z^\beta}(y,0). 
\end{align*}
\end{lemma}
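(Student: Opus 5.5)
This is a direct chain-rule computation. The plan is to write $v^{(0)} = \Upsilon \cdot F$, where $F(y,z) = u^{(0)}(\varphi(y,z),\psi(y,z))$, and to differentiate in $z$ up to third order at $z=0$. The inputs are the facts about $u^{(0)}$ at points $(x,0)$, which hold for every $x \in \bar{B}_{r_0}^m$:
\begin{itemize}
\item $u^{(0)}(x,0)=0$ and $u^{(0)}_{w^\alpha}(x,0)=0$;
\item differentiating these identities in $x$ gives $u^{(0)}_{x^i}(x,0)=0$, $u^{(0)}_{x^i x^j}(x,0)=0$, $u^{(0)}_{x^i w^\alpha}(x,0)=0$, $u^{(0)}_{x^i x^j w^\alpha}(x,0)=0$, and $u^{(0)}_{x^ix^jx^k}(x,0)=0$.
\end{itemize}
Since $\varphi(y,0)=y$ and $\psi(y,0)=0$, every derivative of $u^{(0)}$ in the chain rule is evaluated at $(y,0)$. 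The only surviving derivatives are therefore $u^{(0)}_{w^\delta w^\zeta}$, $u^{(0)}_{w^\delta w^\zeta w^\eta}$ and $u^{(0)}_{x^i w^\delta w^\zeta}$; every other derivative of order $\le 3$ vanishes there.

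First, $v^{(0)}(y,0)=\Upsilon(y,0)\,u^{(0)}(y,0)=0$. Next, $F_{z^\alpha}=u^{(0)}_{x^i}\varphi^i_{z^\alpha}+u^{(0)}_{w^\delta}\psi^\delta_{z^\alpha}$, which vanishes at $z=0$, so $v^{(0)}_{z^\alpha}(y,0)=0$ by the product rule.

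For the second derivatives, $F_{z^\alpha z^\beta}$ consists of terms in which second derivatives of $u^{(0)}$ multiply first derivatives of $\varphi,\psi$, plus terms in which first derivatives of $u^{(0)}$ multiply second derivatives of $\varphi,\psi$. At $z=0$ only the $ww$ term survives:
\[F_{z^\alpha z^\beta}(y,0)=u^{(0)}_{w^\gamma w^\delta}(y,0)\,\psi^\gamma_{z^\alpha}\psi^\delta_{z^\beta}.\]
Because $F$ and $F_z$ vanish at $z=0$, the product rule gives $v^{(0)}_{z^\alpha z^\beta}(y,0)=\Upsilon\,F_{z^\alpha z^\beta}$, which is the claimed formula.

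For the third derivative, the product rule gives
\[v^{(0)}_{z^\alpha z^\beta z^\gamma} = \Upsilon F_{z^\alpha z^\beta z^\gamma} + \Upsilon_{z^\gamma}F_{z^\alpha z^\beta} + \Upsilon_{z^\alpha}F_{z^\beta z^\gamma} + \Upsilon_{z^\beta}F_{z^\gamma z^\alpha},\]
plus terms containing $F$ or $F_z$, which vanish at $z=0$. The three $\Upsilon_z$ terms are exactly the last lines of $T_{\alpha\beta\gamma}$, $T_{\beta\gamma\alpha}$ and $T_{\gamma\alpha\beta}$. For $F_{z^\alpha z^\beta z^\gamma}(y,0)$, apply Faà di Bruno with $X=(\varphi,\psi)$. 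The third derivative of $u^{(0)}$ contracted with three first derivatives of $X$ keeps only
\begin{itemize}
\item the $www$ term, which is the main term in the statement;
\item the $xww$ terms. In these, the $x$-slot can be attached to $\varphi_{z^\gamma}$, $\varphi_{z^\alpha}$ or $\varphi_{z^\beta}$, giving the middle lines of the three cyclic $T$'s.
\end{itemize}
The second derivative of $u^{(0)}$ contracted with one first and one second derivative of $X$ appears in three groupings, $\{\alpha\beta\},\gamma$ and cyclic. Only the $ww$ component survives, since $u^{(0)}_{xx}=u^{(0)}_{xw}=0$ at $w=0$. This gives $u^{(0)}_{w^\delta w^\zeta}\psi^\delta_{z^\gamma}\psi^\zeta_{z^\alpha z^\beta}$ and its cyclic versions, which are the first lines of the $T$'s, using the symmetry of $u^{(0)}_{ww}$. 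The term involving a third derivative of $X$ contracted with a first derivative of $u^{(0)}$ vanishes.

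The main obstacle is bookkeeping rather than anything conceptual: one has to check that the combinatorial multiplicities in Faà di Bruno match exactly one copy of each term in the cyclic sum $T_{\alpha\beta\gamma}+T_{\beta\gamma\alpha}+T_{\gamma\alpha\beta}$. The $xww$ term in particular arises once for each choice of which of the three slots carries $\varphi$.
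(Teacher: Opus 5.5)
Your proposal is correct and takes the same route as the paper, whose proof is a one-line appeal to the chain rule using $u^{(0)}(x,0)=0$ and $u^{(0)}_{w^\alpha}(x,0)=0$ for all $x$. You additionally make explicit the vanishing of the $xx$, $xw$, $xxx$ and $xxw$ derivatives at $w=0$ that follows from these identities, and you verify that the product-rule and Fa\`a di Bruno terms match the cyclic sum $T_{\alpha\beta\gamma}+T_{\beta\gamma\alpha}+T_{\gamma\alpha\beta}$ exactly once each.
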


\textbf{Proof.} 
This follows from the chain rule, keeping in mind that $u^{(0)}(x,0) = 0$ and $u^{(0)}_{w^\alpha}(x,0) = 0$ for all $x$. \\

\begin{lemma}
\label{v1}
The function $v^{(1)}$ satisfies $v^{(1)}(y,0) = 0$, 
\[v^{(1)}_{z^\alpha}(y,0) = \sum_{\beta=1}^n \Upsilon(y,0) \, u^{(1)}_{w^\beta}(y,0) \, \psi^\beta_{z^\alpha}(y,0)\] 
and 
\begin{align*} 
v^{(1)}_{z^\alpha z^\beta}(y,0) 
&= \sum_{\gamma,\delta=1}^n \Upsilon(y,0) \, u^{(1)}_{w^\gamma w^\delta}(y,0) \, \psi^\gamma_{z^\alpha}(y,0) \, \psi^\delta_{z^\beta}(y,0) \\ 
&+ \sum_{\gamma=1}^n \Upsilon(y,0) \, u^{(1)}_{w^\gamma}(y,0) \, \psi^\gamma_{z^\alpha z^\beta}(y,0) + S_{\alpha\beta} + S_{\beta\alpha} 
\end{align*}
for each point $y \in \Omega$, where 
\begin{align*} 
S_{\alpha\beta} 
&= \sum_{i=1}^m \sum_{\gamma=1}^n \Upsilon(y,0) \, u^{(1)}_{x^i w^\gamma}(y,0) \, \varphi^i_{z^\alpha}(y,0) \, \psi^\gamma_{z^\beta}(y,0) \\ 
&+ \sum_{\gamma=1}^n \Upsilon_{z^\alpha}(y,0) \, u^{(1)}_{w^\gamma}(y,0) \, \psi^\gamma_{z^\beta}(y,0). 
\end{align*}
\end{lemma}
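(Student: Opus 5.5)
The plan is to apply the chain rule to $v^{(1)}(y,z) = \Upsilon(y,z)\, u^{(1)}(\varphi(y,z),\psi(y,z))$ and evaluate at $z=0$, in the same way as Lemma \ref{v0}. Two facts do most of the work. First, $\varphi(y,0)=y$ and $\psi(y,0)=0$, so every derivative of $u^{(1)}$ is evaluated at $(y,0)$. Second, $u^{(1)}(x,0)=0$ for all $x \in \bar{B}_{r_0}^m$. Differentiating the second fact in $x$ gives $u^{(1)}_{x^i}(x,0)=0$ and $u^{(1)}_{x^i x^j}(x,0)=0$ for all $x$. No such vanishing holds for the pure $w$-derivatives $u^{(1)}_{w^\gamma}(x,0)$, which is why these terms survive.

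For the zeroth order statement, note that $v^{(1)}(y,0)=\Upsilon(y,0)\,u^{(1)}(y,0)=0$. For the first derivative I would write
\[v^{(1)}_{z^\alpha} = \Upsilon_{z^\alpha}\, u^{(1)} + \Upsilon \Big(\sum_i u^{(1)}_{x^i}\,\varphi^i_{z^\alpha} + \sum_\beta u^{(1)}_{w^\beta}\,\psi^\beta_{z^\alpha}\Big).\]
At $z=0$ the terms containing $u^{(1)}$ and $u^{(1)}_{x^i}$ vanish. Only $\Upsilon(y,0)\sum_\beta u^{(1)}_{w^\beta}(y,0)\,\psi^\beta_{z^\alpha}(y,0)$ remains, which is the claimed formula.

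For the second derivative I would differentiate the displayed expression once more in $z^\beta$ and sort the terms into three groups.

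\textbf{Group 1: terms with no derivative on $\Upsilon$.} These consist of $u^{(1)}_{x^ix^j}\varphi^i_{z^\alpha}\varphi^j_{z^\beta}$ and $u^{(1)}_{x^i}\varphi^i_{z^\alpha z^\beta}$, which vanish at $z=0$. They also include the two mixed terms $u^{(1)}_{x^i w^\gamma}\varphi^i_{z^\alpha}\psi^\gamma_{z^\beta}$ and $u^{(1)}_{x^i w^\gamma}\varphi^i_{z^\beta}\psi^\gamma_{z^\alpha}$. Finally there are the terms $u^{(1)}_{w^\gamma w^\delta}\psi^\gamma_{z^\alpha}\psi^\delta_{z^\beta}$ and $u^{(1)}_{w^\gamma}\psi^\gamma_{z^\alpha z^\beta}$.

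\textbf{Group 2: terms with one derivative on $\Upsilon$.} These are $\Upsilon_{z^\alpha}$ times the first derivative of $u^{(1)}\circ(\varphi,\psi)$ in $z^\beta$, plus the same expression with $\alpha$ and $\beta$ interchanged. At $z=0$ they reduce to $\Upsilon_{z^\alpha}\sum_\gamma u^{(1)}_{w^\gamma}\psi^\gamma_{z^\beta}$ and its $\alpha\leftrightarrow\beta$ swap.

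\textbf{Group 3: the term $\Upsilon_{z^\alpha z^\beta}\,u^{(1)}$.} This vanishes at $z=0$.

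Collecting the surviving terms gives the stated formula. Each mixed term and each $\Upsilon$-derivative term goes into $S_{\alpha\beta}$ or $S_{\beta\alpha}$, according to which index is attached to $\varphi$ or to $\Upsilon$.

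The main obstacle is bookkeeping rather than any conceptual difficulty. The point to get right is that the symmetrization $S_{\alpha\beta}+S_{\beta\alpha}$ exactly accounts for the two mixed $x$–$w$ terms and the two $\Upsilon$-derivative terms. I would check this by listing all nine terms of the product-plus-chain expansion explicitly before evaluating at $z=0$.
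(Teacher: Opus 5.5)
Your proposal is correct and follows the same approach as the paper, whose proof is just the chain rule combined with $u^{(1)}(x,0)=0$ for all $x$. As you note, this kills $u^{(1)}$, $u^{(1)}_{x^i}$ and $u^{(1)}_{x^i x^j}$ at $(y,0)$ and leaves exactly the terms you collect into $S_{\alpha\beta}+S_{\beta\alpha}$ (the full expansion has eleven terms rather than nine, which changes nothing).
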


\textbf{Proof.} 
This follows from the chain rule, keeping in mind that $u^{(1)}(x,0) = 0$ for all $x$. \\

\begin{lemma}
\label{v2}
The function $v^{(2)}$ satisfies 
\[v^{(2)}(y,0) = \Upsilon(y,0) \, u^{(2)}(y,0)\] 
and 
\begin{align*} 
v^{(2)}_{z^\alpha}(y,0) 
&= \sum_{\beta=1}^n \Upsilon(y,0) \, u^{(2)}_{w^\beta}(y,0) \, \psi^\beta_{z^\alpha}(y,0) \\ 
&+ \sum_{i=1}^m \Upsilon(y,0) \, u^{(2)}_{x^i}(y,0) \, \varphi^i_{z^\alpha}(y,0) \\ 
&+ \Upsilon_{z^\alpha}(y,0) \, u^{(2)}(y,0) 
\end{align*}
for each point $y \in \Omega$.
\end{lemma}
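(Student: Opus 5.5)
The plan is to differentiate the defining formula $v^{(2)}(y,z) = \Upsilon(y,z) \, u^{(2)}(\varphi(y,z),\psi(y,z))$ directly with the chain rule and then evaluate at $z = 0$. Unlike Lemma \ref{v0} and Lemma \ref{v1}, no vanishing property of $u^{(2)}$ along $w = 0$ is available or needed. The statement involves only the value and the first $z$-derivatives, so a single application of the product rule and the chain rule should suffice.

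First, I set $z = 0$ and use the normalizations $\varphi(y,0) = y$ and $\psi(y,0) = 0$. This gives $u^{(2)}(\varphi(y,0),\psi(y,0)) = u^{(2)}(y,0)$, and hence $v^{(2)}(y,0) = \Upsilon(y,0) \, u^{(2)}(y,0)$.

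Next, I differentiate with respect to $z^\alpha$. The product rule yields $\Upsilon_{z^\alpha} \, u^{(2)}(\varphi,\psi) + \Upsilon \, \partial_{z^\alpha}[u^{(2)}(\varphi,\psi)]$. The chain rule then gives
\[\partial_{z^\alpha}[u^{(2)}(\varphi,\psi)] = \sum_{i=1}^m u^{(2)}_{x^i}(\varphi,\psi) \, \varphi^i_{z^\alpha} + \sum_{\beta=1}^n u^{(2)}_{w^\beta}(\varphi,\psi) \, \psi^\beta_{z^\alpha}.\]
Evaluating at $z = 0$, with the same normalizations, turns every argument $(\varphi,\psi)$ into $(y,0)$. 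This produces exactly the three terms in the stated formula for $v^{(2)}_{z^\alpha}(y,0)$.

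There is no real obstacle here. The only point requiring care is to keep the $\varphi$-dependence: the terms $u^{(2)}_{x^i}(y,0) \, \varphi^i_{z^\alpha}(y,0)$ and $\Upsilon_{z^\alpha}(y,0) \, u^{(2)}(y,0)$ survive because $u^{(2)}(\cdot,0)$ is not assumed to vanish, in contrast with the earlier lemmas.
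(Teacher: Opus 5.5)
Your proposal is correct and matches the paper's proof, which simply says the lemma follows from the chain rule. You apply the product and chain rules to $\Upsilon(y,z)\,u^{(2)}(\varphi(y,z),\psi(y,z))$ and evaluate at $z=0$ using $\varphi(y,0)=y$ and $\psi(y,0)=0$, which is exactly what is needed.
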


\textbf{Proof.}
This follows from the chain rule. \\

It follows from Lemma \ref{v0} that the matrix $(v^{(0)}_{z^\alpha z^\beta}(y,0))_{1 \leq \alpha,\beta \leq n}$ is positive definite for each point $y \in \Omega$. For each point $y \in \Omega$, we define $\bar{z}(y) \in \mathbb{R}^n$ by 
\begin{equation} 
\label{definition.of.bar.z}
\sum_{\beta=1}^n v^{(0)}_{z^\alpha z^\beta}(y,0) \, \bar{z}^\beta(y) = -v^{(1)}_{z^\alpha}(y,0) 
\end{equation}
Clearly, $\bar{z}$ is a smooth function defined on $\Omega$. For each point $y \in \Omega$, we define 
\[V^{(2)}(y) = v^{(2)}(y,0) + \frac{1}{2} \sum_{\alpha=1}^n v^{(1)}_{z^\alpha}(y,0) \, \bar{z}^\alpha(y)\] 
and 
\begin{align*} 
V^{(3)}(y) 
&= v^{(3)}(y,0) + \sum_{\alpha=1}^n v^{(2)}_{z^\alpha}(y,0) \, \bar{z}^\alpha(y) + \frac{1}{2} \sum_{\alpha,\beta=1}^n v^{(1)}_{z^\alpha z^\beta}(y,0) \, \bar{z}^\alpha(y) \, \bar{z}^\beta(y) \\ 
&+ \frac{1}{6} \sum_{\alpha,\beta,\gamma=1}^n v^{(0)}_{z^\alpha z^\beta z^\gamma}(y,0) \, \bar{z}^\alpha(y) \, \bar{z}^\beta(y) \, \bar{z}^\gamma(y). 
\end{align*}
Note that $V^{(2)}$ and $V^{(3)}$ are smooth functions defined on $\Omega$.

\begin{lemma}
\label{relation.between.bar.w.and.bar.z}
The function $\bar{z}$ is related to the function $\bar{w}$ by the formula 
\[\sum_{\beta=1}^n \psi^\alpha_{z^\beta}(y,0) \, \bar{z}^\beta(y) = \bar{w}^\alpha(y)\] 
for each point $y \in \Omega$.
\end{lemma}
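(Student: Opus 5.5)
The plan is to substitute the formulas of Lemma \ref{v0} and Lemma \ref{v1} into the defining equation (\ref{definition.of.bar.z}) for $\bar{z}$. Then we compare the result with the defining equation (\ref{definition.of.bar.w}) for $\bar{w}$, evaluated at the point $x = \varphi(y,0) = y$. Throughout, abbreviate $A_{\gamma\delta} = u^{(0)}_{w^\gamma w^\delta}(y,0)$, $P^\gamma_\alpha = \psi^\gamma_{z^\alpha}(y,0)$ and $\Upsilon = \Upsilon(y,0) > 0$.

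By Lemma \ref{v0}, $v^{(0)}_{z^\alpha z^\beta}(y,0) = \Upsilon \sum_{\gamma,\delta} P^\gamma_\alpha A_{\gamma\delta} P^\delta_\beta$. By Lemma \ref{v1}, $v^{(1)}_{z^\alpha}(y,0) = \Upsilon \sum_\gamma u^{(1)}_{w^\gamma}(y,0) P^\gamma_\alpha$. Define $\tilde{w}^\delta = \sum_\beta P^\delta_\beta \bar{z}^\beta(y)$. Then (\ref{definition.of.bar.z}) reads
\[\Upsilon \sum_\gamma P^\gamma_\alpha \Big( \sum_\delta A_{\gamma\delta} \tilde{w}^\delta + u^{(1)}_{w^\gamma}(y,0) \Big) = 0 \quad \text{for all } \alpha.\]

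Since $\Upsilon > 0$ and the matrix $P$ is invertible by hypothesis, we can cancel $\Upsilon$ and $P^T$. This gives $\sum_\delta A_{\gamma\delta} \tilde{w}^\delta = -u^{(1)}_{w^\gamma}(y,0)$ for every $\gamma$. This is exactly equation (\ref{definition.of.bar.w}) at $x = y$. The matrix $A$ is positive definite, so the solution of that equation is unique. Hence $\tilde{w} = \bar{w}(y)$, which is the claim.

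There is no real obstacle here; the argument is a direct comparison of linear systems. The only points needing care are the index placement, namely that $P$ enters as $P^T A P$, and the use of invertibility of $P$ to cancel $P^T$. Both are immediate from the hypotheses.
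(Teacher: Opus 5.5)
Your proposal is correct and follows the paper's proof: substitute Lemma \ref{v0} and Lemma \ref{v1} into (\ref{definition.of.bar.z}), then use invertibility of $(\psi^\gamma_{z^\alpha}(y,0))$ and of $(u^{(0)}_{w^\gamma w^\delta}(y,0))$ to identify $\sum_\beta \psi^\alpha_{z^\beta}(y,0)\,\bar z^\beta(y)$ with $\bar w^\alpha(y)$. The only difference is ordering: the paper first rewrites $-u^{(1)}_{w^\gamma}$ via (\ref{definition.of.bar.w}) and then cancels, while you cancel first and then invoke uniqueness of the solution of (\ref{definition.of.bar.w}).
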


\textbf{Proof.} 
For each point $y \in \Omega$, we compute 
\begin{align*} 
&\sum_{\beta,\gamma,\delta=1}^n \Upsilon(y,0) \, u^{(0)}_{w^\gamma w^\delta}(y,0) \, \psi^\gamma_{z^\alpha}(y,0) \, \psi^\delta_{z^\beta}(y,0) \, \bar{z}^\beta(y) \\ 
&= \sum_{\beta=1}^n v^{(0)}_{z^\alpha z^\beta}(y,0) \, \bar{z}^\beta(y) \\ 
&= -v^{(1)}_{z^\alpha}(y,0) \\ 
&= -\sum_{\gamma=1}^n \Upsilon(y,0) \, u^{(1)}_{w^\gamma}(y,0) \, \psi^\gamma_{z^\alpha}(y,0) \\ 
&= \sum_{\gamma,\delta=1}^n \Upsilon(y,0) \, u^{(0)}_{w^\gamma w^\delta}(y,0) \, \psi^\gamma_{z^\alpha}(y,0) \, \bar{w}^\delta(y). 
\end{align*} 
The first equality follows from Lemma \ref{v0}. The second equality follows from (\ref{definition.of.bar.z}). The third equality follows from Lemma \ref{v1}. The fourth equality follows from (\ref{definition.of.bar.w}). 

For each point $y \in \Omega$, the matrix $(\psi^\gamma_{z^\alpha}(y,0))_{1 \leq \alpha,\gamma \leq n}$ and the matrix $(u^{(0)}_{w^\gamma w^\delta}(y,0))_{1 \leq \gamma,\delta \leq n}$ are invertible. Thus, we conclude that 
\[\sum_{\beta=1}^n \psi^\delta_{z^\beta}(y,0) \, \bar{z}^\beta(y) = \bar{w}^\delta(y)\] 
for each point $y \in \Omega$. This completes the proof of Lemma \ref{relation.between.bar.w.and.bar.z}. \\

\begin{lemma}
\label{relation.between.U2.and.V2}
We have $V^{(2)}(y) = \Upsilon(y,0) \, U^{(2)}(y)$ for each point $y \in \Omega$. 
\end{lemma}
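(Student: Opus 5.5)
The plan is to compare the two defining formulas term by term. By definition,
\[V^{(2)}(y) = v^{(2)}(y,0) + \frac{1}{2} \sum_{\alpha=1}^n v^{(1)}_{z^\alpha}(y,0) \, \bar{z}^\alpha(y).\]
I will evaluate each term at $z=0$ using Lemma \ref{v2} and Lemma \ref{v1}, and then convert the result into $w$-quantities using Lemma \ref{relation.between.bar.w.and.bar.z}. Throughout, I use $\varphi(y,0)=y$ and $\psi(y,0)=0$, so that all derivatives of $u^{(k)}$ are evaluated at $(y,0)$.

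For the first term, Lemma \ref{v2} gives $v^{(2)}(y,0) = \Upsilon(y,0) \, u^{(2)}(y,0)$. For the second term, Lemma \ref{v1} gives
\[\sum_{\alpha=1}^n v^{(1)}_{z^\alpha}(y,0) \, \bar{z}^\alpha(y) = \sum_{\alpha,\beta=1}^n \Upsilon(y,0) \, u^{(1)}_{w^\beta}(y,0) \, \psi^\beta_{z^\alpha}(y,0) \, \bar{z}^\alpha(y).\]
Summing over $\alpha$ first and applying Lemma \ref{relation.between.bar.w.and.bar.z}, namely $\sum_\alpha \psi^\beta_{z^\alpha}(y,0) \, \bar{z}^\alpha(y) = \bar{w}^\beta(y)$, this becomes
\[\Upsilon(y,0) \sum_{\beta=1}^n u^{(1)}_{w^\beta}(y,0) \, \bar{w}^\beta(y).\]

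Adding the two contributions gives
\[V^{(2)}(y) = \Upsilon(y,0) \Big( u^{(2)}(y,0) + \frac{1}{2} \sum_{\beta=1}^n u^{(1)}_{w^\beta}(y,0) \, \bar{w}^\beta(y) \Big),\]
which is $\Upsilon(y,0) \, U^{(2)}(y)$ by the definition of $U^{(2)}$.

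There is no real obstacle here. The only point requiring care is the index bookkeeping: one must contract $\psi^\beta_{z^\alpha}$ against $\bar{z}^\alpha$ over the $z$-index $\alpha$, which is the index that Lemma \ref{relation.between.bar.w.and.bar.z} sums over.
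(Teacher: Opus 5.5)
Your proposal is correct and is essentially the paper's proof. Lemma \ref{v1} combined with Lemma \ref{relation.between.bar.w.and.bar.z} converts $\sum_\alpha v^{(1)}_{z^\alpha}(y,0)\,\bar z^\alpha(y)$ into $\Upsilon(y,0)\sum_\beta u^{(1)}_{w^\beta}(y,0)\,\bar w^\beta(y)$, and Lemma \ref{v2} handles $v^{(2)}(y,0)$; the paper uses Lemma \ref{v2} only implicitly, and you cite it explicitly.
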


\textbf{Proof.} 
Using Lemma \ref{v1} and Lemma \ref{relation.between.bar.w.and.bar.z}, we obtain 
\[\sum_{\alpha=1}^n v^{(1)}_{z^\alpha}(y,0) \, \bar{z}^\alpha(y) = \sum_{\beta=1}^n \Upsilon(y,0) \, u^{(1)}_{w^\beta}(y,0) \, \bar{w}^\beta(y)\] 
for each point $y \in \Omega$. Using the definitions of $U^{(2)}$ and $V^{(2)}$, we conclude that $V^{(2)}(y) = \Upsilon(y,0) \, U^{(2)}(y)$ for each point $y \in \Omega$. This completes the proof of Lemma \ref{relation.between.U2.and.V2}. \\

\begin{lemma}
\label{relation.between.U3.and.V3}
We have 
\begin{align*} 
V^{(3)}(y) 
&= \Upsilon(y,0) \, U^{(3)}(y) + \sum_{i=1}^m \sum_{\gamma=1}^n \Upsilon(y,0) \, U^{(2)}_{x^i}(y) \, \varphi^i_{z^\gamma}(y,0) \, \bar{z}^\gamma(y) \\ 
&+ \sum_{\gamma=1}^n \Upsilon_{z^\gamma}(y,0) \, U^{(2)}(y)  \, \bar{z}^\gamma(y) 
\end{align*}
for each point $y \in \Omega$. 
\end{lemma}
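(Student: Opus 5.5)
Substitute the formulas of Lemmas \ref{v0}, \ref{v1}, \ref{v2} into the four terms of $V^{(3)}(y)$, then use Lemma \ref{relation.between.bar.w.and.bar.z} to turn every contraction $\sum_\beta \psi^\alpha_{z^\beta}(y,0)\,\bar z^\beta(y)$ into $\bar w^\alpha(y)$. To shorten the bookkeeping, write $\Phi^i = \sum_\gamma \varphi^i_{z^\gamma}(y,0)\,\bar z^\gamma(y)$, $\Psi^\delta = \sum_{\alpha,\beta}\psi^\delta_{z^\alpha z^\beta}(y,0)\,\bar z^\alpha \bar z^\beta$, and $\Lambda = \sum_\gamma \Upsilon_{z^\gamma}(y,0)\,\bar z^\gamma$. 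The aim is to show that $V^{(3)}$ equals $\Upsilon U^{(3)}$, plus $\Upsilon\sum_i \Phi^i$ times the right-hand side of Lemma \ref{derivative.of.U2}, plus $\Lambda U^{(2)}$.

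The four terms of $V^{(3)}$ contract as follows.
\begin{itemize}
\item $v^{(3)}(y,0)=\Upsilon u^{(3)}(y,0)$.
\item By Lemma \ref{v2}, $\sum v^{(2)}_{z^\alpha}\bar z^\alpha = \Upsilon\sum u^{(2)}_{w^\beta}\bar w^\beta + \Upsilon\sum_i u^{(2)}_{x^i}\Phi^i + \Lambda u^{(2)}$.
\item By Lemma \ref{v1}, $\frac12\sum v^{(1)}_{z^\alpha z^\beta}\bar z^\alpha\bar z^\beta = \frac12\Upsilon\sum u^{(1)}_{w^\gamma w^\delta}\bar w^\gamma\bar w^\delta + \frac12\Upsilon\sum u^{(1)}_{w^\gamma}\Psi^\gamma + \Upsilon\sum_i u^{(1)}_{x^i w^\gamma}\Phi^i\bar w^\gamma + \Lambda\sum u^{(1)}_{w^\gamma}\bar w^\gamma$. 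The factor $1/2$ cancels against the symmetrization $S_{\alpha\beta}+S_{\beta\alpha}$.
\item By Lemma \ref{v0}, the cubic term gives $\frac16\Upsilon\sum u^{(0)}_{www}\bar w\bar w\bar w$ plus $\frac12\sum T_{\alpha\beta\gamma}\bar z^\alpha\bar z^\beta\bar z^\gamma$, since the three cyclic $T$'s contribute equally. The latter expands to $\frac12\Upsilon\sum u^{(0)}_{w^\delta w^\zeta}\bar w^\delta\Psi^\zeta + \frac12\Upsilon\sum_i u^{(0)}_{x^iw^\delta w^\zeta}\Phi^i\bar w^\delta\bar w^\zeta + \frac12\Lambda\sum u^{(0)}_{w^\delta w^\zeta}\bar w^\delta\bar w^\zeta$.
\end{itemize}

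Next apply the defining equation (\ref{definition.of.bar.w}), which says $\sum_\delta u^{(0)}_{w^\delta w^\zeta}\bar w^\delta = -u^{(1)}_{w^\zeta}$, to simplify the $u^{(0)}$-Hessian terms. First, the $\Psi$ term from Lemma \ref{v0} becomes $-\frac12\Upsilon\sum u^{(1)}_{w^\zeta}\Psi^\zeta$. This exactly cancels the $\Psi$ term from Lemma \ref{v1}, so all second derivatives of $\psi$ drop out. Second, the last $\Lambda$ term becomes $-\frac12\Lambda\sum u^{(1)}_{w}\bar w$. Combined with $\Lambda u^{(2)}$ and $\Lambda\sum u^{(1)}_w\bar w$, the total $\Lambda$ coefficient is $u^{(2)}+\frac12\sum u^{(1)}_w\bar w = U^{(2)}$, which gives the $\Upsilon_{z^\gamma}U^{(2)}\bar z^\gamma$ term.

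Now sort what remains. The $\Phi^i$ terms are $\Upsilon\Phi^i\,\bigl(u^{(2)}_{x^i} + \sum u^{(1)}_{x^iw^\gamma}\bar w^\gamma + \frac12\sum u^{(0)}_{x^iw^\delta w^\zeta}\bar w^\delta\bar w^\zeta\bigr)$. By Lemma \ref{derivative.of.U2} this bracket equals $U^{(2)}_{x^i}(y)$, which yields the middle term of the claim. The leftover terms are $\Upsilon\bigl(u^{(3)}+\sum u^{(2)}_w\bar w+\frac12\sum u^{(1)}_{ww}\bar w\bar w+\frac16\sum u^{(0)}_{www}\bar w\bar w\bar w\bigr) = \Upsilon U^{(3)}$. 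Throughout, all $u$-derivatives are evaluated at $(y,0)$, using $\varphi(y,0)=y$ and $\psi(y,0)=0$.

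The main risk is the combinatorial factors. I need to check that the cyclic sum $T_{\alpha\beta\gamma}+T_{\beta\gamma\alpha}+T_{\gamma\alpha\beta}$, after contraction and multiplication by $\frac16$, gives exactly $\frac12$ of one contracted $T$. I also need the precise cancellation of the $\psi_{zz}$ terms and the correct coefficient $1$ on $\Lambda\sum u^{(1)}_w\bar w$ from $S_{\alpha\beta}+S_{\beta\alpha}$. I would verify these coefficients first.
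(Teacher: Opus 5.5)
Your proposal is correct and follows essentially the same route as the paper. Like the paper, you contract the formulas of Lemmas \ref{v0}, \ref{v1}, \ref{v2} against $\bar z$ and convert them via Lemma \ref{relation.between.bar.w.and.bar.z}. You then use (\ref{definition.of.bar.w}) to cancel the $\psi_{zz}$ terms and to collapse the $\Upsilon_z$ terms to $U^{(2)}$, and finally recognize $U^{(2)}_{x^i}$ via Lemma \ref{derivative.of.U2}. The coefficients you flagged all check out: the three cyclic $T$'s contract to the same value against the symmetric tensor $\bar z^\alpha \bar z^\beta \bar z^\gamma$, giving the factor $\frac{1}{2}$, and $\frac{1}{2}(S_{\alpha\beta}+S_{\beta\alpha})$ contracts to exactly one copy of $S_{\alpha\beta}\bar z^\alpha \bar z^\beta$.
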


\textbf{Proof.} 
Using Lemma \ref{v0} and Lemma \ref{relation.between.bar.w.and.bar.z}, we obtain 
\begin{align*}
&\sum_{\alpha,\beta,\gamma=1}^n v^{(0)}_{z^\alpha z^\beta z^\gamma}(y,0) \, \bar{z}^\alpha(y) \, \bar{z}^\beta(y) \, \bar{z}^\gamma(y) \\ 
&= \sum_{\delta,\zeta,\eta=1}^n \Upsilon(y,0) \, u^{(0)}_{w^\delta w^\zeta w^\eta}(y,0) \, \bar{w}^\delta(y) \, \bar{w}^\zeta(y) \, \bar{w}^\eta(y) \\ 
&+ 3 \sum_{\alpha,\beta,\delta,\zeta=1}^n \Upsilon(y,0) \, u^{(0)}_{w^\delta w^\zeta}(y,0) \, \psi^\zeta_{z^\alpha z^\beta}(y,0) \, \bar{z}^\alpha(y) \, \bar{z}^\beta(y) \, \bar{w}^\delta(y) \\ 
&+ 3 \sum_{i=1}^m \sum_{\gamma,\delta,\zeta=1}^n \Upsilon(y,0) \, u^{(0)}_{x^i w^\delta w^\zeta}(y,0) \, \varphi^i_{z^\gamma}(y,0) \, \bar{w}^\delta(y) \, \bar{w}^\zeta(y) \, \bar{z}^\gamma(y) \\ 
&+ 3 \sum_{\gamma,\delta,\zeta=1}^n \Upsilon_{z^\gamma}(y,0) \, u^{(0)}_{w^\delta w^\zeta}(y,0) \, \bar{w}^\delta(y) \, \bar{w}^\zeta(y) \, \bar{z}^\gamma(y) 
\end{align*} 
for each point $y \in \Omega$. Moreover, Lemma \ref{v1} and Lemma \ref{relation.between.bar.w.and.bar.z} imply that  
\begin{align*} 
&\sum_{\alpha,\beta=1}^n v^{(1)}_{z^\alpha z^\beta}(y,0) \, \bar{z}^\alpha(y) \, \bar{z}^\beta(y) \\ 
&= \sum_{\gamma,\delta=1}^n \Upsilon(y,0) \, u^{(1)}_{w^\gamma w^\delta}(y,0) \, \bar{w}^\gamma(y) \, \bar{w}^\delta(y) \\ 
&+ \sum_{\alpha,\beta,\gamma=1}^n \Upsilon(y,0) \, u^{(1)}_{w^\gamma}(y,0) \, \psi^\gamma_{z^\alpha z^\beta}(y,0) \, \bar{z}^\alpha(y) \, \bar{z}^\beta(y) \\
&+ 2 \sum_{i=1}^m \sum_{\alpha,\gamma=1}^n \Upsilon(y,0) \, u^{(1)}_{x^i w^\gamma}(y,0) \, \varphi^i_{z^\alpha}(y,0) \, \bar{z}^\alpha(y) \, \bar{w}^\gamma(y) \\ 
&+ 2 \sum_{\alpha,\gamma=1}^n \Upsilon_{z^\alpha}(y,0) \, u^{(1)}_{w^\gamma}(y,0) \, \bar{z}^\alpha(y) \, \bar{w}^\gamma(y) 
\end{align*}
for each point $y \in \Omega$. Finally, using Lemma \ref{v2} and Lemma \ref{relation.between.bar.w.and.bar.z}, we find 
\begin{align*} 
&\sum_{\alpha=1}^n v^{(2)}_{z^\alpha}(y,0) \, \bar{z}^\alpha(y) \\ 
&= \sum_{\beta=1}^n \Upsilon(y,0) \, u^{(2)}_{w^\beta}(y,0) \, \bar{w}^\beta(y) \\ 
&+ \sum_{i=1}^m \sum_{\alpha=1}^n \Upsilon(y,0) \, u^{(2)}_{x^i}(y,0) \, \varphi^i_{z^\alpha}(y,0) \, \bar{z}^\alpha(y) \\ 
&+ \sum_{\alpha=1}^n \Upsilon_{z^\alpha}(y,0) \, u^{(2)}(y,0)  \, \bar{z}^\alpha(y) 
\end{align*}
for each point $y \in \Omega$. Using the definitions of $U^{(3)}$ and $V^{(3)}$ together with the identity (\ref{definition.of.bar.w}), we conclude that 
\begin{align*} 
V^{(3)}(y) 
&= \Upsilon(y,0) \, U^{(3)}(y) + \sum_{i=1}^m \sum_{\gamma=1}^n \Upsilon(y,0) \, u^{(2)}_{x^i}(y,0) \, \varphi^i_{z^\gamma}(y,0) \, \bar{z}^\gamma(y) \\ 
&+ \sum_{i=1}^m \sum_{\alpha,\gamma=1}^n \Upsilon(y,0) \, u^{(1)}_{x^i w^\alpha}(y,0) \, \varphi^i_{z^\gamma}(y,0) \, \bar{w}^\alpha(y) \, \bar{z}^\gamma(y) \\ 
&+ \frac{1}{2} \sum_{i=1}^m \sum_{\alpha,\beta,\gamma=1}^n \Upsilon(y,0) \, u^{(0)}_{x^i w^\alpha w^\beta}(y,0) \, \varphi^i_{z^\gamma}(y,0) \, \bar{w}^\alpha(y) \, \bar{w}^\beta(y) \, \bar{z}^\gamma(y) \\ 
&+ \sum_{\gamma=1}^n \Upsilon_{z^\gamma}(y,0) \, u^{(2)}(y,0)  \, \bar{z}^\gamma(y) \\ 
&+ \frac{1}{2} \sum_{\alpha,\gamma=1}^n \Upsilon_{z^\gamma}(y,0) \, u^{(1)}_{w^\alpha}(y,0) \, \bar{w}^\alpha(y) \, \bar{z}^\gamma(y)
\end{align*} 
for each point $y \in \Omega$. Using Lemma \ref{derivative.of.U2}, it follows that 
\begin{align*} 
V^{(3)}(y) 
&= \Upsilon(y,0) \, U^{(3)}(y) + \sum_{i=1}^m \sum_{\gamma=1}^n \Upsilon(y,0) \, U^{(2)}_{x^i}(y) \, \varphi^i_{z^\gamma}(y,0) \, \bar{z}^\gamma(y) \\ 
&+ \sum_{\gamma=1}^n \Upsilon_{z^\gamma}(y,0) \, U^{(2)}(y)  \, \bar{z}^\gamma(y) 
\end{align*}
for each point $y \in \Omega$. This completes the proof of Lemma \ref{relation.between.U3.and.V3}. \\

\begin{proposition} 
\label{invariance}
Let $\Lambda$ be a subset of $\Omega$ with the property that $V^{(2)} = 0$ and $dV^{(2)} = 0$ at each point in $\Lambda$. Then $V^{(3)}(y) = \Upsilon(y,0) \, U^{(3)}(y)$ for each point $y \in \Lambda$. In particular, the function $\Upsilon(\cdot,0)^{-1} \, V^{(3)}|_\Lambda$ is the restriction of a smooth function on $\bar{B}_{r_0}^m$.
\end{proposition}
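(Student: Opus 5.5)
The plan is to read off the statement directly from Lemma \ref{relation.between.U3.and.V3}, once the hypotheses on $V^{(2)}$ have been converted into information about $U^{(2)}$. By Lemma \ref{relation.between.U2.and.V2}, we have $V^{(2)}(y) = \Upsilon(y,0) \, U^{(2)}(y)$ on $\Omega$. Since $\Upsilon$ is positive, $V^{(2)}(y) = 0$ immediately gives $U^{(2)}(y) = 0$ for each $y \in \Lambda$.

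Next, I would differentiate the identity $V^{(2)} = \Upsilon(\cdot,0) \, U^{(2)}$ in $y$. This identity holds on the open set $\Omega$ and $U^{(2)}$ is smooth on $\bar{B}_{r_0}^m$, so we get
\[V^{(2)}_{y^i}(y) = \partial_{y^i}\Upsilon(y,0) \, U^{(2)}(y) + \Upsilon(y,0) \, U^{(2)}_{x^i}(y).\]
At a point $y \in \Lambda$, the first term vanishes because $U^{(2)}(y)=0$. The left side vanishes by the hypothesis $dV^{(2)} = 0$. Positivity of $\Upsilon$ then gives $U^{(2)}_{x^i}(y) = 0$ for all $i$. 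The one point needing care is that $\Lambda$ itself need not be open. The differentiation is therefore done on $\Omega$, and only afterwards is the result evaluated at points of $\Lambda$.

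Substituting $U^{(2)}(y) = 0$ and $U^{(2)}_{x^i}(y) = 0$ into the formula of Lemma \ref{relation.between.U3.and.V3}, both correction terms drop out. This leaves $V^{(3)}(y) = \Upsilon(y,0) \, U^{(3)}(y)$ for $y \in \Lambda$. Consequently $\Upsilon(\cdot,0)^{-1} V^{(3)}|_\Lambda = U^{(3)}|_\Lambda$. Here $U^{(3)}$ is defined by an explicit formula on all of $\bar{B}_{r_0}^m$ and is smooth there, because $\bar{w}$ is smooth, being obtained by inverting the smoothly varying positive definite matrix $(u^{(0)}_{w^\alpha w^\beta}(x,0))$. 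This gives the final assertion.

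There is no real obstacle: all the substantive computation is already contained in Lemma \ref{relation.between.U3.and.V3}. The only step that needs care is the derivative argument in the second paragraph.
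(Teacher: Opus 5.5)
Your proposal is correct and follows the paper's proof: the paper also uses Lemma \ref{relation.between.U2.and.V2} to get $U^{(2)} = 0$ and $dU^{(2)} = 0$ on $\Lambda$, and then reads the claim off Lemma \ref{relation.between.U3.and.V3}. You have just written out the product-rule step on the open set $\Omega$ and the smoothness of $U^{(3)}$ on $\bar{B}_{r_0}^m$, both of which the paper leaves implicit.
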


\textbf{Proof.} 
It follows from Lemma \ref{relation.between.U2.and.V2} that $U^{(2)} = 0$ and $dU^{(2)} = 0$ at each point $y \in \Lambda$. The assertion follows now from Lemma \ref{relation.between.U3.and.V3}. \\

\begin{corollary} 
\label{positivity}
Let $\rho: \bar{B}_{r_0}^m \to \mathbb{R}$ be a nonnegative continuous function. We assume that $\kappa$ is a positive real number with the following properties: 
\begin{itemize} 
\item For each point $y \in \Omega$, we have $V^{(2)}(y) \geq \kappa \, \Upsilon(y,0) \, \rho(y)$. 
\item For each point $y \in \Omega \cap \{\rho = 0\}$, we have $V^{(2)}(y) = 0$ and $V^{(3)}(y) \geq \kappa \, \Upsilon(y,0)$.
\end{itemize} 
Let $A \subset \bar{B}_{r_0}^m$ be a compact set with the property that $A$ is contained in the closure of $\Omega$ and $A \cap \{\rho = 0\}$ is contained in the closure of $\Omega \cap \{\rho = 0\}$. Then $\inf_{y \in A} U(y,s) > 0$ if $s \in (0,s_0]$ is sufficiently small.
\end{corollary}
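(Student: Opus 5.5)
Since $U$ is defined on all of $\bar B^m_{r_0}$ and does not depend on the choice of $\Upsilon,\varphi,\psi$, the plan is to transfer the hypotheses on $V^{(2)},V^{(3)}$ to statements about $U^{(2)},U^{(3)}$ on $\bar B^m_{r_0}$. We then combine these with the expansion of Lemma \ref{asymptotic.expansion.of.U} and conclude by a compactness argument on $A$.

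\emph{Step 1: transfer to $U^{(2)}$.} By Lemma \ref{relation.between.U2.and.V2} and $\Upsilon>0$, the first hypothesis gives $U^{(2)}(y)\ge\kappa\rho(y)$ on $\Omega$. Both sides are continuous, so this extends to the closure of $\Omega$, hence to $A$.

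\emph{Step 2: transfer to $U^{(3)}$.} Consider $\Lambda=\Omega\cap\{\rho=0\}$. On $\Lambda$ we have $V^{(2)}=0$, so $U^{(2)}=0$ there. Moreover $U^{(2)}\ge\kappa\rho\ge0$ on the open set $\Omega$, so every point of $\Lambda$ is an interior minimum of $U^{(2)}|_\Omega$. This gives $dU^{(2)}=0$ on $\Lambda$, and therefore $dV^{(2)}=0$ on $\Lambda$ by Lemma \ref{relation.between.U2.and.V2}, using $V^{(2)}=\Upsilon(\cdot,0)U^{(2)}$ and $U^{(2)}=0$. Proposition \ref{invariance} now yields $U^{(3)}=\Upsilon(\cdot,0)^{-1}V^{(3)}\ge\kappa$ on $\Lambda$. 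By continuity of $U^{(3)}$ on $\bar B^m_{r_0}$, the bound $U^{(3)}\ge\kappa$ holds on the closure of $\Lambda$, which contains $A\cap\{\rho=0\}$ by assumption.

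\emph{Step 3: compactness.} Let $K=A\cap\{\rho=0\}$, which is compact, and let $W=\{U^{(3)}>\kappa/2\}$, an open set containing $K$. On the compact set $A\setminus W$ the function $\rho$ is positive, so $\rho\ge\delta>0$ there and $U^{(2)}\ge\kappa\delta$. Lemma \ref{asymptotic.expansion.of.U} gives
\[U(y,s)\ge s^2U^{(2)}(y)+s^3U^{(3)}(y)-Cs^4.\]
We need the $O(s^4)$ term to be uniform in $x$. This follows because the implicit function construction is uniform on the compact set $\bar B^m_{r_0}$; this uniformity is the point requiring some care.
\begin{itemize}
\item On $A\setminus W$ we get $U\ge s^2\kappa\delta-Cs^3>0$ for small $s>0$, using a uniform bound $|U^{(3)}|\le C$.
\item On $A\cap W$ we have $U^{(2)}\ge0$ and $U^{(3)}>\kappa/2$, so $U\ge s^3\kappa/2-Cs^4>0$ for small $s>0$.
\end{itemize}
Both lower bounds are uniform in $y$, so $\inf_A U(\cdot,s)>0$ for $s\in(0,s_0]$ small.

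The main obstacle is Step 2. Proposition \ref{invariance} requires $dV^{(2)}=0$, which is not assumed, so it has to be derived from the minimum argument, and this works only because $\Omega$ is open. The second delicate point is passing from $\Lambda$ to its closure. That passage is legitimate only because $U^{(3)}$ is globally smooth, whereas $V^{(3)}$ is known only on $\Omega$, and this is exactly where Proposition \ref{invariance} is used.
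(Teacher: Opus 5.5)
Your proposal is correct and follows essentially the same route as the paper. You transfer the bound to $U^{(2)}$ via Lemma \ref{relation.between.U2.and.V2}, then obtain $dV^{(2)}=0$ on $\Omega\cap\{\rho=0\}$ from the interior-minimum argument, so Proposition \ref{invariance} gives $U^{(3)}\geq\kappa$ there and hence on $A\cap\{\rho=0\}$ by continuity. You then split $A$ into a region where the $s^3$ term dominates and one where the $s^2$ term dominates. The only cosmetic difference is the split: you use the open set $\{U^{(3)}>\kappa/2\}$ and a bound $\rho\geq\delta$ on its complement, whereas the paper chooses $\varepsilon$ with $U^{(3)}\geq\kappa/2$ on $A\cap\{\rho\leq\varepsilon\}$; these are equivalent compactness arguments.
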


\textbf{Proof.} 
Using Lemma \ref{relation.between.U2.and.V2}, we obtain $V^{(2)}(y) = \Upsilon(y,0) \, U^{(2)}(y)$ for each point $y \in \Omega$. Since $V^{(2)}(y) \geq \kappa \, \Upsilon(y,0) \, \rho(y)$ for each point $y \in \Omega$, it follows that $U^{(2)}(y) \geq \kappa \, \rho(y)$ for each point $y \in \Omega$. Since $U^{(2)}$ and $\rho$ are continuous functions on $\bar{B}_{r_0}^m$ and $A$ is contained in the closure of $\Omega$, we conclude that $U^{(2)}(y) \geq \kappa \, \rho(y)$ for each point $y \in A$.

By assumption, $V^{(2)}$ is a nonnegative smooth function on $\Omega$, and $V^{(2)} = 0$ at each point in $\Omega \cap \{\rho = 0\}$. Consequently, $dV^{(2)} = 0$ at each point in $\Omega \cap \{\rho = 0\}$. Using Proposition \ref{invariance}, we obtain $V^{(3)}(y) = \Upsilon(y,0) \, U^{(3)}(y)$ for each point $y \in \Omega \cap \{\rho = 0\}$. Since $V^{(3)}(y) \geq \kappa \, \Upsilon(y,0)$ for each point $y \in \Omega \cap \{\rho = 0\}$, it follows that $U^{(3)}(y) \geq \kappa$ for each point $y \in \Omega \cap \{\rho = 0\}$. Since $U^{(3)}$ is a continuous function on $\bar{B}_{r_0}^m$ and $A \cap \{\rho = 0\}$ is contained in the closure of $\Omega \cap \{\rho = 0\}$, we conclude that $U^{(3)}(y) \geq \kappa$ for each point $y \in A \cap \{\rho=0\}$. 

By continuity, we can find a positive real number $\varepsilon$ such that $U^{(3)}(y) \geq \frac{1}{2} \, \kappa$ for all $y \in A \cap \{0 \leq \rho \leq \varepsilon\}$. Using Lemma \ref{asymptotic.expansion.of.U}, we obtain 
\begin{align*} 
U(y,s) 
&\geq s^2 \, U^{(2)}(y) + s^3 \, U^{(3)}(y) - C \, s^4 \\ 
&\geq s^3 \, U^{(3)}(y) - C \, s^4 \\ 
&\geq \frac{1}{2} \, s^3 \, \kappa - C \, s^4 
\end{align*}
for all $y \in A \cap \{0 \leq \rho \leq \varepsilon\}$ and all $s \in (0,s_0]$. Moreover, Lemma \ref{asymptotic.expansion.of.U} gives 
\begin{align*} 
U(y,s) 
&\geq s^2 \, U^{(2)}(y) - C \, s^3 \\ 
&\geq s^2 \, \kappa \, \rho(y) - C \, s^3 \\ 
&\geq s^2 \, \kappa \, \varepsilon - C \, s^3 
\end{align*}
for all $y \in A \setminus \{0 \leq \rho \leq \varepsilon\}$ and all $s \in (0,s_0]$. Thus, 
\[\inf_{y \in A} U(y,s) \geq \min \Big \{ \frac{1}{2} \, s^3 \, \kappa - C \, s^4,s^2 \, \kappa \, \varepsilon - C \, s^3 \Big \}\] 
for all $s \in (0,s_0]$. This completes the proof of Corollary \ref{positivity}. \\

In the remainder of this section, we state two results that apply in the manifold setting. 

\begin{theorem} 
\label{smooth.extension}
Let $\mathcal{N}$ be a compact Riemannian manifold of dimension $m+n$, and let $\mathcal{Z}$ be a compact submanifold of $\mathcal{N}$ of dimension $m$. Let $s_0$ be a positive real number, and let $u: \mathcal{N} \times [-s_0,s_0] \to \mathbb{R}$ be a smooth function. We assume that 
\[u(\cdot,s) = u^{(0)} + s \, u^{(1)} + s^2 \, u^{(2)} + s^3 \, u^{(3)} + O(s^4),\]
where $u^{(0)},u^{(1)},u^{(2)},u^{(3)}: \mathcal{N} \to \mathbb{R}$ are smooth functions. We assume that $u^{(0)}|_{\mathcal{Z}} = 0$ and $u^{(0)} \geq \sigma \, d(\cdot,\mathcal{Z})^2$ at each point in $\mathcal{N}$, where $\sigma$ is a positive constant. Moreover, we assume that $u^{(1)}|_{\mathcal{Z}} = 0$.

Let $\Omega$ be an open subset of $\mathcal{Z}$ and let $r$ be a positive real number. Suppose that $\Upsilon: \Omega \times B_r^n \to \mathbb{R}$ is a positive smooth function. Moreover, suppose that $\Phi: \Omega \times B_r^n \to \mathcal{N}$ is a smooth map with the property that $\Phi(y,0) = y$ for each point $y \in \Omega$, and the differential $D\Phi_{(y,0)}: T_y \mathcal{Z} \times \mathbb{R}^n \to T_y \mathcal{N}$ is invertible for each point $y \in \Omega$. We define smooth functions $v^{(0)},v^{(1)},v^{(2)},v^{(3)}: \Omega \times B_r^n \to \mathbb{R}$ by 
\[v^{(0)} = \Upsilon \, (u^{(0)} \circ \Phi),\ v^{(1)} = \Upsilon \, (u^{(1)} \circ \Phi),\ v^{(2)} = \Upsilon \, (u^{(2)} \circ \Phi),\ v^{(3)} = \Upsilon \, (u^{(3)} \circ \Phi).\] 
Let $V^{(2)}: \Omega \to \mathbb{R}$ and $V^{(3)}: \Omega \to \mathbb{R}$ be defined as above. Let $\Lambda$ be a subset of $\Omega$ with the property that $V^{(2)} = 0$ and $dV^{(2)} = 0$ at each point in $\Lambda$. Then the function $\Upsilon(\cdot,0)^{-1} \, V^{(3)}|_\Lambda$ is the restriction of a smooth function on $\mathcal{Z}$.
\end{theorem}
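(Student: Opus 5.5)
The plan is to reduce Theorem \ref{smooth.extension} to the local statement Proposition \ref{invariance} by working in tubular-neighborhood coordinates around $\mathcal{Z}$. The candidate smooth function on $\mathcal{Z}$ will be defined intrinsically, and the local formula will then show that it agrees with $\Upsilon(\cdot,0)^{-1} V^{(3)}$ on $\Lambda$.

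\textbf{Step 1: intrinsic coordinates.} Let $\nu\mathcal{Z}$ be the normal bundle of $\mathcal{Z}$ in $\mathcal{N}$, and let $E: \nu\mathcal{Z} \to \mathcal{N}$ be the normal exponential map, which is a diffeomorphism from a neighborhood of the zero section onto a tubular neighborhood of $\mathcal{Z}$. Set $\tilde u = u \circ E$ on a small disk bundle. The hypotheses give:
\begin{itemize}
\item $u^{(0)} \circ E$ vanishes on the zero section;
\item $u^{(0)} \circ E \geq \sigma |w|^2$, since $d(E(y,w),\mathcal{Z}) = |w|$ for small $w$;
\item $u^{(1)} \circ E$ vanishes on the zero section.
\end{itemize}
Hence the fibrewise Hessian of $u^{(0)}\circ E$ at the zero section is positive definite. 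We then define $\bar w$, $U^{(2)}$ and $U^{(3)}$ fibrewise by the formulas preceding Lemma \ref{asymptotic.expansion.of.U}. These formulas only use derivatives in the fibre direction, so $\bar w$ is a smooth section of $\nu\mathcal{Z}$, and $U^{(2)}$, $U^{(3)}$ are smooth functions on $\mathcal{Z}$. Locally we choose coordinates $x$ on a chart of $\mathcal{Z}$ and an orthonormal frame of $\nu\mathcal{Z}$. This puts us exactly in the setting of Section \ref{framework}, with $\bar B^m_{r_0}$ a coordinate ball. We check that changing the frame by a smooth $x$-dependent rotation leaves $U^{(2)}$ and $U^{(3)}$ unchanged. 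This follows from tensoriality: in each term of $U^{(2)}$ and $U^{(3)}$ all fibre indices are contracted against $\bar w$, and $\bar w$ transforms as a vector. So $U^{(3)}$ is globally well defined and smooth on $\mathcal{Z}$.

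\textbf{Step 2: local comparison.} Fix $y_0 \in \Omega$. Near $(y_0,0)$ we write $E^{-1}\circ\Phi(y,z) = (\varphi(y,z),\psi(y,z))$ in the chart and frame, shrinking $\Omega$ and $r$ locally. Since $\Phi(y,0)=y$, we get $\varphi(y,0)=y$ and $\psi(y,0)=0$. Because $D\Phi_{(y,0)}$ is invertible and $\Phi(\cdot,0)$ already maps onto $\mathcal{Z}$ tangentially, the normal part $\psi_z(y,0)$ must be invertible. Then $v^{(k)} = \Upsilon\,(u^{(k)}\circ E)\circ(\varphi,\psi)$, which is exactly the setup of Lemmas \ref{v0}--\ref{relation.between.U3.and.V3}. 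Proposition \ref{invariance} now gives $V^{(3)} = \Upsilon(\cdot,0)\,U^{(3)}$ on $\Lambda$ near $y_0$. Since $y_0$ was arbitrary and $U^{(3)}$ is global, $\Upsilon(\cdot,0)^{-1}V^{(3)}|_\Lambda = U^{(3)}|_\Lambda$, which is the restriction of a smooth function on $\mathcal{Z}$.

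\textbf{Main obstacle.} The delicate point is Step 1, namely checking that $U^{(3)}$ is independent of the local trivialization. The term $\frac16 u^{(0)}_{www}\bar w\bar w\bar w$ involves third fibre derivatives, and a frame change also acts on the base coordinates. However, the frame change is linear in each fibre, and $u^{(0)}$ has vanishing first fibre derivative on $\mathcal{Z}$, so the fibrewise Taylor coefficients transform as tensors. The key fact is that all quantities are derivatives purely in $w$ at fixed $x$ along the linear fibres of $\nu\mathcal{Z}$. Alternatively, this check can be avoided by applying Lemma \ref{relation.between.U3.and.V3} itself to a change of trivialization, taking $\Upsilon=1$ and $\varphi(y,z)=y$. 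In that case the correction terms vanish identically, and Lemma \ref{relation.between.U3.and.V3} directly yields invariance of $U^{(3)}$ on all of $\mathcal{Z}$.
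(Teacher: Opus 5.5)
Your proposal is correct and follows the paper's route. The paper's proof simply covers $\mathcal{Z}$ by finitely many small balls and applies Proposition \ref{invariance} on each ball, which is your Step 2. Your Step 1 builds a single global $U^{(3)}$ from the normal exponential map and checks its frame-independence via Lemma \ref{relation.between.U3.and.V3} with $\Upsilon=1$, $\varphi(y,z)=y$; this makes explicit the gluing that the paper leaves implicit. A partition of unity subordinate to the cover would also suffice, because each local $U^{(3)}$ already agrees with $\Upsilon(\cdot,0)^{-1}V^{(3)}$ on $\Lambda$.
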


\textbf{Proof.} We cover $\mathcal{Z}$ by finitely many small balls and apply Proposition \ref{invariance} on each ball. \\

\begin{theorem} 
\label{positivity.manifold.version}
Let $\mathcal{N}$ be a compact Riemannian manifold of dimension $m+n$, and let $\mathcal{Z}$ be a compact submanifold of $\mathcal{N}$ of dimension $m$. Let $s_0$ be a positive real number, and let $u: \mathcal{N} \times [-s_0,s_0] \to \mathbb{R}$ be a smooth function. We assume that 
\[u(\cdot,s) = u^{(0)} + s \, u^{(1)} + s^2 \, u^{(2)} + s^3 \, u^{(3)} + O(s^4),\]
where $u^{(0)},u^{(1)},u^{(2)},u^{(3)}: \mathcal{N} \to \mathbb{R}$ are smooth functions. We assume that $u^{(0)}|_{\mathcal{Z}} = 0$ and $u^{(0)} \geq \sigma \, d(\cdot,\mathcal{Z})^2$ at each point in $\mathcal{N}$, where $\sigma$ is a positive constant. Moreover, we assume that $u^{(1)}|_{\mathcal{Z}} = 0$.

Let $\Omega$ be an open subset of $\mathcal{Z}$ and let $r$ be a positive real number. Suppose that $\Upsilon: \Omega \times B_r^n \to \mathbb{R}$ is a positive smooth function. Moreover, suppose that $\Phi: \Omega \times B_r^n \to \mathcal{N}$ is a smooth map with the property that $\Phi(y,0) = y$ for each point $y \in \Omega$, and the differential $D\Phi_{(y,0)}: T_y \mathcal{Z} \times \mathbb{R}^n \to T_y \mathcal{N}$ is invertible for each point $y \in \Omega$. We define smooth functions $v^{(0)},v^{(1)},v^{(2)},v^{(3)}: \Omega \times B_r^n \to \mathbb{R}$ by 
\[v^{(0)} = \Upsilon \, (u^{(0)} \circ \Phi),\ v^{(1)} = \Upsilon \, (u^{(1)} \circ \Phi),\ v^{(2)} = \Upsilon \, (u^{(2)} \circ \Phi),\ v^{(3)} = \Upsilon \, (u^{(3)} \circ \Phi).\] 
Let $V^{(2)}: \Omega \to \mathbb{R}$ and $V^{(3)}: \Omega \to \mathbb{R}$ be defined as above. Let $\rho: \mathcal{Z} \to \mathbb{R}$ be a nonnegative continuous function. We assume that $\kappa$ is a positive real number with the following properties:
\begin{itemize} 
\item For each point $y \in \Omega$, we have $V^{(2)}(y) \geq \kappa \, \Upsilon(y,0) \, \rho(y)$. 
\item For each point $y \in \Omega \cap \{\rho = 0\}$, we have $V^{(2)}(y) = 0$ and $V^{(3)}(y) \geq \kappa \, \Upsilon(y,0)$.
\end{itemize} 
Finally, we assume that $\Omega$ is dense in $\mathcal{Z}$ and $\Omega \cap \{\rho = 0\}$ is dense in $\{\rho = 0\}$. Then $\inf_{\mathcal{N}} u(\cdot,s) > 0$ if $s \in (0,s_0]$ is sufficiently small.
\end{theorem}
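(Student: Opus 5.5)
We reduce Theorem \ref{positivity.manifold.version} to the local Corollary \ref{positivity}, and use compactness to handle points far from $\mathcal{Z}$.

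\emph{Step 1: points far from $\mathcal{Z}$.} Fix a small $\delta>0$, smaller than the normal injectivity radius of $\mathcal{Z}$. On the compact set $\{d(\cdot,\mathcal{Z})\geq \delta\}$ we have $u^{(0)}\geq \sigma\delta^2$. Since the $O(s^4)$ error and $u^{(1)},u^{(2)},u^{(3)}$ are uniformly bounded on the compact manifold, we get $u(\cdot,s)\geq \sigma\delta^2 - C|s| >0$ there for $|s|$ small.

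\emph{Step 2: reduction near $\mathcal{Z}$ to local coordinates.} Cover $\mathcal{Z}$ by finitely many small closed coordinate balls $A_k$. Near each one, choose coordinates $(x,w)\in \bar B^m_{r_0}\times\bar B^n_{r_0}$ on $\mathcal{N}$ with $\mathcal{Z}=\{w=0\}$, for instance via the normal exponential map with a trivialized normal bundle. In these coordinates $d(\cdot,\mathcal{Z})^2$ is comparable to $|w|^2$. Hence the hypotheses of Section \ref{framework} hold: $u^{(0)}(x,0)=0$, $u^{(0)}\geq \sigma'|w|^2$, and $u^{(1)}(x,0)=0$. Shrinking $\delta$ if necessary, the tube $\{d(\cdot,\mathcal{Z})<\delta\}$ lies in the union of these charts over the $A_k$. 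Every point there lies on a fiber $\{x\}\times\bar B^n_{r_0}$ with $x\in A_k$, so
\[u(x,w,s)\geq \inf_{w'\in \bar B^n_{r_0}} u(x,w',s)=U(x,s).\]
It therefore suffices to show $\inf_{A_k}U(\cdot,s)>0$ for each $k$ and all small $s>0$.

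\emph{Step 3: transferring $\Upsilon,\Phi$ to the chart.} In the chart, write $\Phi=(\varphi,\psi)$ on $\Omega_k=\Omega\cap$ (chart), shrinking $r$ so that $\Phi$ lands in the chart. Then $\varphi(y,0)=y$ and $\psi(y,0)=0$. Since $D\Phi_{(y,0)}$ restricted to $T_y\mathcal{Z}$ is the inclusion, invertibility of $D\Phi_{(y,0)}$ forces the block $(\psi^\beta_{z^\alpha}(y,0))$ to be invertible. The functions $v^{(i)}$, $V^{(2)}$, $V^{(3)}$ are therefore exactly those of Section \ref{framework}, and the two bulleted hypotheses are those of Corollary \ref{positivity} on $\Omega_k$.

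The density requirements of Corollary \ref{positivity} follow from the density assumptions. We need $A_k\subset\overline{\Omega_k}$, which holds because $\Omega$ is dense and $A_k$ is the closure of an open piece of $\mathcal{Z}$. We also need $A_k\cap\{\rho=0\}\subset\overline{\Omega_k\cap\{\rho=0\}}$. Here a point of $\{\rho=0\}$ in the boundary of $A_k$ is still approximated by points of $\Omega\cap\{\rho=0\}$ lying inside the slightly larger open chart. So we apply Corollary \ref{positivity} with $\Omega_k$ taken as $\Omega$ intersected with the open chart domain, which contains $A_k$ in its closure.

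Corollary \ref{positivity} then gives $\inf_{A_k}U(\cdot,s)>0$ for small $s>0$. Taking the minimum over finitely many $k$ and combining with Step 1 finishes the proof.

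The main obstacle is the bookkeeping in Step 2. We must choose the charts so that $\bar B^n_{r_0}$-fibers over the $A_k$ cover a full tubular neighborhood, and so that $u^{(0)}\geq\sigma'|w|^2$ holds on the whole chart. This requires $r_0$ to be small relative to $\delta$, with $\delta$ chosen afterwards to fit inside the fibers. It is routine but needs some care.
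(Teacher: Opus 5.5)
Your proposal is correct and follows the same route as the paper. The paper's entire proof is the sentence ``cover $\mathcal{Z}$ by finitely many small balls and apply Corollary~\ref{positivity} on each ball,'' and your Steps 1--3 make explicit what that sentence leaves implicit: the estimate away from a tubular neighborhood of $\mathcal{Z}$, the normal-exponential charts with $u\geq U$ along fibers, the invertibility of $\psi_z$ from the block-triangular form of $D\Phi_{(y,0)}$, and the density bookkeeping for $A_k$. Two minor points: a uniform shrinking of $r$ on the noncompact $\Omega_k$ need not exist, but this is harmless because $V^{(2)}$, $V^{(3)}$ and all identities of Section~\ref{framework} only involve jets at $z=0$ pointwise; and the first clause of your closing remark reverses the order, since what you need (and what Step 2 uses) is $\delta<r_0$.
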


\textbf{Proof.} We cover $\mathcal{Z}$ by finitely many small balls and apply Corollary \ref{positivity} on each ball. \\

\section{The Cheeger-M\"uter metric}\label{sec:Mueter.metric}

Let $M = S^2 \times S^2$. This section computes the Cheeger-M\"uter metric on $M$, which serves as the background metric of our construction.

For a vector $\omega\in\mathbb{R}^3$, we denote by $K_\omega$ and $J_\omega$ the vector fields on $M$ given at the point $p=(p_1,p_2)$ by
\[
K_\omega = (\omega\wedge p_1, \omega\wedge p_2),\ J_\omega=(\omega\wedge p_1, -\omega\wedge p_2).
\]
More generally, if $\tilde{\omega}$ is an $\mathbb{R}^3$-valued function defined on an open subset $U$ of $M$, let $K_{\tilde{\omega}}$ denote the vector field on $U$ given at the point $p=(p_1,p_2)$ by
\[
K_{\tilde{\omega}}= (\tilde{\omega}(p)\wedge p_1,\tilde{\omega}(p)\wedge p_2).
\]
As above, we define $M_{\text{\rm nondeg}} = M \setminus (\Delta_+ \cup \Delta_-)$. We next explain how $M_{\text{\rm nondeg}}$ can be identified with $SO(3)\times (0,\frac{\pi}{2})$. It is convenient first to fix some notation for $SO(3)$.
We view $SO(3)$ as the group of orthogonal $3\times 3$ matrices with determinant $1$.
Each element $\mathbf{q}$ of $SO(3)$ can be written as $\mathbf{q}=(q_1,q_2,q_3)$, where $q_1,q_2,q_3$ form an orthonormal basis of $\mathbb{R}^3$ that satisfies $q_1\wedge q_2=q_3$.
For $i=1,2,3$, let $E_i$ be the left-invariant vector field on $SO(3)$ characterized by
\[
E_iq_j=\sum_{k=1}^3\varepsilon_{ijk}q_k,
\]
where $\varepsilon_{ijk}$ is the totally antisymmetric symbol with $\varepsilon_{123}=1$.
Let $\sigma^i$, $i=1,2,3$, be the left-invariant one-forms dual to $E_1, E_2,$ and $E_3$.

With this notation in place, the identification of $M_{\text{\rm nondeg}}$ with $SO(3)\times (0,\frac{\pi}{2})$ is given by $(p_1,p_2)\mapsto (q_1,q_2,q_3,\theta)$, where
\begin{equation}\label{identify.s2xs2.SO(3)}
q_1=\frac{p_1+p_2}{|p_1+p_2|},\ q_2=\frac{p_2-p_1}{|p_2-p_1|},\ q_3=\frac{p_1\wedge p_2}{|p_1\wedge p_2|},\ \theta=\frac{1}{2}\arccos(\langle p_1,p_2\rangle).
\end{equation}
Under this identification, $E_i$ and $\sigma^i$ can be viewed as vector fields and one-forms on $M_{\text{\rm nondeg}}$.
Explicitly,
\[
E_i=(q_i\wedge p_1, q_i\wedge p_2)=K_{q_i}, i=1,2,3,
\]
and
\[
\partial_\theta=(-q_3\wedge p_1,q_3\wedge p_2)=-J_{q_3}.
\]
Using $p_1=\cos\theta q_1-\sin\theta q_2$ and $p_2=\cos\theta q_1+\sin\theta q_2$, these vector fields take the form
\begin{align*}
E_1=& (-\sin\theta q_3, \sin\theta q_3),\\
E_2=&(-\cos\theta q_3, -\cos\theta q_3),\\
E_3=& (\sin\theta q_1+\cos\theta q_2, -\sin\theta q_1+\cos\theta q_2),\\
\partial_\theta=& (-\sin\theta q_1-\cos\theta q_2,-\sin\theta q_1+\cos\theta q_2).
\end{align*}
Let $g_{\text{\rm std}}$ be the standard product metric on $M$, with each factor having Gauss curvature one.
The above expressions give
\[
g_{\text{\rm std}}=2\sin^2\theta \, \sigma^1\otimes \sigma^1+2\cos^2\theta \, \sigma^2\otimes \sigma^2+2 \, \sigma^3\otimes \sigma^3+2 \, d\theta\otimes d\theta.\]
The Cheeger-M\"uter metric can now be computed in the coordinates given by \eqref{identify.s2xs2.SO(3)}.
Consider the diagonal action of $SO(3)$ on $M = S^2\times S^2$.
The Lie algebra $\mathfrak{so}(3)$ is identified with $\mathbb{R}^3$ so that, for $\omega\in \mathfrak{so}(3)\cong\mathbb{R}^3$,
the associated vector field with the diagonal action is given by $K_\omega$.
Let $Q$ be the bi-invariant metric on $SO(3)$ that corresponds to the standard inner product on $\mathbb{R}^3$ under the identification $\mathfrak{so}(3)\cong \mathbb{R}^3$.
Fix $t>0$ and equip $M \times SO(3)$ with the metric $g_{\text{\rm std}}+\frac{1}{t}Q$.
The Cheeger deformation of $g_{\text{\rm std}}$ with parameter $t$, denoted by $g_{\text{\rm CM},t}$, is characterized by the requirement that the map from $(M \times SO(3),g_{\text{\rm std}}+\frac{1}{t}Q)$ to $(M,g_{\text{\rm CM},t})$ given by $(p,\mathbf{q})\mapsto \mathbf{q}^{-1}\cdot p$ is a Riemannian submersion.
For $p\in M$, let $\mathcal{V}_p$ be the vertical subspace of $T_p M$ spanned by $K_\omega,\ \omega\in\mathbb{R}^3$,
 and let $\mathcal{H}_p$ denote the $g_{\text{\rm std}}$-orthogonal complement of $\mathcal{V}_p$.
Let $P_p:\mathbb{R}^3\to \mathbb{R}^3$ be the map such that
\[\langle P_p(\omega_1), \omega_2\rangle= g_{\text{\rm std}}(K_{\omega_1},K_{\omega_2})\ \text{\rm at}\ p.\]
This map $P_p$ can be viewed as an endomorphism on $\mathcal{V}_p$ through $P_p(K_\omega)=K_{P_p(\omega)}$.
Define the endomorphism $C_t$ of $T_p M$ by 
\[C_tX= (I+tP_p)^{-1}X^{\mathcal{V}}+X^{\mathcal{H}},\]
where $X=X^{\mathcal{V}}+X^{\mathcal{H}}$ is the vertical-horizontal decomposition.
The formula for the Cheeger deformation \cite{Ziller} gives
\[g_{\text{\rm CM},t}(X,Y)= g_{\text{\rm std}}(C_t X, Y)\ \text{\rm for all}\ X,Y\in T_p M.\]
Since $E_i=K_{q_i}$, the vectors $q_1,q_2,q_3\in\mathbb{R}^3$, and equivalently $E_1,E_2,E_3\in T_p M$ under the induced endomorphism of $\mathcal{V}_p$, are eigenvectors of $P_p$ with eigenvalues $2\sin^2\theta,\ 2\cos^2\theta, 2$, respectively.
It follows that
\[g_{\text{\rm CM},t}=\frac{2\sin^2\theta}{1+2t\sin^2\theta} \, \sigma^1\otimes \sigma^1+\frac{2\cos^2\theta}{1+2t\cos^2\theta} \, \sigma^2\otimes \sigma^2
+\frac{2}{1+2t} \, \sigma^3\otimes \sigma^3 + 2 \, d\theta\otimes d\theta.\]
Throughout the rest of the paper, $g=\frac{1}{2}g_{\text{\rm CM},1}$ is the background metric on $M$.
Explicitly, on $M_{\text{\rm nondeg}}$,
\[
g=\frac{\sin^2\theta}{2-\cos(2\theta)}\sigma^1\otimes \sigma^1
+\frac{\cos^2\theta}{2+\cos(2\theta)}\sigma^2\otimes \sigma^2
+\frac{1}{3}\sigma^3\otimes \sigma^3+d\theta\otimes d\theta.
\]
We define vector fields $m_i$, $1 \leq i \leq 4$, on $M_{\text{\rm nondeg}}$ by
\[
m_1=\frac{1}{\sin\theta} E_1,\ m_2=\frac{1}{\cos\theta} E_2,\ m_3=E_3,\ m_4=\partial_\theta.
\] 
Then 
\[|m_1|_g^2 = \frac{1}{2-\cos(2\theta)},\ |m_2|_g^2 = \frac{1}{2+\cos(2\theta)},\ |m_3|_g^2 = \frac{1}{3},\ |m_4|_g^2 = 1\]
on $M_{\text{\rm nondeg}}$. Writing $\omega=\sum_{i=1}^3 \langle q_i,\omega \rangle q_i$ and using $p_1=\cos\theta q_1-\sin\theta q_2$ and $p_2=\cos\theta q_1+\sin\theta q_2$, we have
\begin{align*}
\omega\wedge p_1=& \langle q_1,\omega \rangle (-\sin\theta q_3)+\langle q_2,\omega \rangle (-\cos\theta q_3)+\langle q_3,\omega \rangle (\sin\theta q_1+\cos\theta q_2),\\
\omega\wedge p_2=& \langle q_1,\omega \rangle (\sin\theta q_3)+\langle q_2,\omega \rangle(-\cos\theta q_3)+\langle q_3,\omega \rangle(-\sin\theta q_1+\cos\theta q_2).
\end{align*}
It follows that
\begin{equation}\label{K.omega.in.mi}
K_\omega=\langle q_1,\omega \rangle\sin\theta m_1+\langle q_2,\omega \rangle\cos\theta m_2+\langle q_3,\omega \rangle m_3,    
\end{equation}
and
\begin{equation}\label{J.omega.in.mi}
J_\omega= \langle q_2,\omega \rangle\cos\theta m_1+\langle q_1,\omega \rangle\sin\theta m_2-\langle q_3,\omega \rangle m_4. 
\end{equation}

In the remainder of this section, we introduce some notation that we will need later. We denote by $\mathcal{N}$ the set of all two-dimensional tangent planes. Note that $\mathcal{N}$ is a compact smooth manifold of dimension $8$. Given a point $p = (p_1,p_2) \in M$, we denote by $\mathcal{Z}_p$ the set of all two-planes $\pi \subset T_p M$ with the property that there exists a unit vector $b \in S^2$ such that $\langle p_1,b \rangle = \langle p_2,b \rangle = 0$ and $\pi = \text{\rm span}\{(b,0),(0,b)\}^\perp$, where the orthogonal complement is taken with respect to $g_{\text{\rm std}}$. If we define $\mathcal{Z} = \bigcup_{p \in M} \mathcal{Z}_p$, then $\mathcal{Z}$ is a compact submanifold of $\mathcal{N}$ of dimension $4$. Note that $\mathcal{Z}$ is the set of all two-planes that have zero sectional curvature with respect to the metric $g$.

Let $\mathcal{Z}_{\text{\rm nondeg}} = \bigcup_{p \in M_{\text{\rm nondeg}}} \mathcal{Z}_p$. It is easy to see that $\mathcal{Z}_{\text{\rm nondeg}}$ is an open and dense subset of $\mathcal{Z}$. \\

\section{Formulae for metric perturbations}

\label{sec:formula.metric.perturbation}

We consider a metric perturbation
\[g_s = g + s \, h^{(1)} + s^2 \, h^{(2)}.\] 
Let $R_{g_s}(X,Y,Z,W)$ be the Riemannian curvature tensor for $g_s$.
We denote by $R^{(0)},R^{(1)},R^{(2)},R^{(3)}$ the coefficients in the Taylor expansion of $R_{g_s}$:
\begin{align*}
R_{g_s}(X,Y,Z,W)=&R^{(0)}(X,Y,Z,W)+sR^{(1)}(X,Y,Z,W)\\
&+s^2R^{(2)}(X,Y,Z,W)+s^3R^{(3)}(X,Y,Z,W)+O(s^4). 
\end{align*}
Given a symmetric $(0,2)$-tensor $h$, we define a $(0,3)$-tensor $Th$ and a $(0,4)$-tensor $Lh$ by
\[(Th)_{ijk}=\frac{1}{2}\big( D_i h_{jk}+D_j h_{ki}-D_k h_{ij} \big),\]
and
\begin{align*} 
(Lh)_{ijkl}=&\frac{1}{2}\big(- (D^2_{i,k}h)_{jl}+(D^2_{i,l} h)_{jk}+(D^2_{j,k}h)_{il}-(D^2_{j,l}h)_{ik} \big)\\
&+\frac{1}{2}g^{pq}R_{ijkp}h_{ql}-\frac{1}{2}g^{pq}R_{ijlp}h_{kq}. 
\end{align*}
Given two symmetric $(0,2)$-tensors $h_{\mathrm{I}}$ and $h_{\mathrm{II}}$, we define a $(0,4)$-tensor $Q(h_{\mathrm{I}},h_{\mathrm{II}})$ by
\begin{align*}
Q(h_{\mathrm{I}},h_{\mathrm{II}})_{ijkl}=&\frac{1}{2}g^{pq}(Th_{\mathrm{I}})_{jkp}(Th_{\mathrm{II}})_{ilq}-\frac{1}{2}g^{pq}(Th_{\mathrm{I}})_{ikp}(Th_{\mathrm{II}})_{jlq}\\
+&\frac{1}{2}g^{pq}(Th_{\mathrm{II}})_{jkp}(Th_{\mathrm{I}})_{ilq}-\frac{1}{2}g^{pq}(Th_{\mathrm{II}})_{ikp}(Th_{\mathrm{I}})_{jlq}. 
\end{align*}
Note that $Q(h_{\mathrm{I}},h_{\mathrm{II}}) = Q(h_{\mathrm{II}},h_{\mathrm{I}})$. Given three symmetric $(0,2)$-tensors $h_{\mathrm{I}}$, $h_{\mathrm{II}}$ and $h_{\mathrm{III}}$, we define a $(0,4)$-tensor $C(h_{\mathrm{I}},h_{\mathrm{II}},h_{\mathrm{III}})$ by
\begin{align*}
C(h_{\mathrm{I}},h_{\mathrm{II}},h_{\mathrm{III}})_{ijkl}=&
-g^{pp'}g^{qq'}(h_{\mathrm{I}})_{p'q'}(Th_{\mathrm{II}})_{jkp}(Th_{\mathrm{III}})_{ilq}\\
&+g^{pp'}g^{qq'}(h_{\mathrm{I}})_{p'q'}(Th_{\mathrm{II}})_{ikp}(Th_{\mathrm{III}})_{jlq}.
\end{align*}
From Proposition~\ref{curvature.tensor.of.perturbed.metric}, we have the formulas for $R^{(1)}$, $R^{(2)}$, and $R^{(3)}$ below. 
\begin{lemma}
\label{curvature.tensor.expansion}
We have 
\begin{align*}
R^{(1)}=&Lh^{(1)},\\
R^{(2)}=&Lh^{(2)}+Q(h^{(1)},h^{(1)}),\\
R^{(3)}=&2Q(h^{(1)},h^{(2)})+C(h^{(1)},h^{(1)},h^{(1)}).
\end{align*}
\end{lemma}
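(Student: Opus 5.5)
The plan is to deduce the lemma from Proposition~\ref{curvature.tensor.of.perturbed.metric}, which I take to give the expansion of the curvature tensor of $g + k$ for a general small symmetric $(0,2)$-tensor $k$, namely
\[
R_{g+k} = R_g + Lk + Q(k,k) + C(k,k,k) + O(|k|^4),
\]
where the remainder is controlled by $k$ and finitely many of its covariant derivatives.

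Before using this, I would check that the normalizations of $L$, $Q$, $C$ fixed above agree with the proposition. Write $R_{g+k}$ in lowered form as second-derivative terms plus $(g+k)^{pq}$ contracted against products of Christoffel-type differences. The difference of the Levi-Civita connections of $g+k$ and $g$, lowered with $g+k$, is exactly $Tk$. Hence the lowered curvature is:
\begin{itemize}
\item linear in the second covariant derivatives of $k$ (with curvature terms coming from commuting derivatives), which gives $Lk$;
\item plus $(g+k)^{pq}$ times a quadratic expression in $Tk$.
\end{itemize}
Expanding $(g+k)^{-1} = g^{-1} - g^{-1} k g^{-1} + O(|k|^2)$, the $g^{-1}$ part reproduces $Q(k,k)$. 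This uses that the symmetrized definition of $Q$ gives $Q(k,k)_{ijkl} = g^{pq}(Tk)_{jkp}(Tk)_{ilq} - g^{pq}(Tk)_{ikp}(Tk)_{jlq}$. The $-g^{-1}kg^{-1}$ part reproduces $C(k,k,k)$, and everything else is of order four or higher in $k$.

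Next, I would record the algebraic structure:
\begin{itemize}
\item $L$ is linear.
\item $T$ is linear, so $Q$ is a symmetric bilinear form in its two arguments, as already noted after its definition.
\item $C$ is trilinear.
\end{itemize}

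Then substitute $k = s\,h^{(1)} + s^2 h^{(2)}$ and collect powers of $s$.
\begin{itemize}
\item The linear term gives $Lk = s\,Lh^{(1)} + s^2 Lh^{(2)}$.
\item Bilinearity and symmetry give $Q(k,k) = s^2 Q(h^{(1)},h^{(1)}) + 2s^3 Q(h^{(1)},h^{(2)}) + s^4 Q(h^{(2)},h^{(2)})$.
\item Trilinearity gives $C(k,k,k) = s^3 C(h^{(1)},h^{(1)},h^{(1)}) + O(s^4)$.
\end{itemize}
Since $k$ and all its derivatives are $O(s)$ uniformly on the compact manifold, the remainder $O(|k|^4)$ is $O(s^4)$. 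Matching coefficients with the definition of $R^{(1)}, R^{(2)}, R^{(3)}$ then gives the three stated identities. Note that no $Lh^{(3)}$ term appears at order $s^3$, because $g_s$ here has no $s^3$ term.

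The only step requiring care is the first one: confirming that the constants in the proposition (in particular the factor in front of $Q$ and the sign and factor of $C$) match the definitions of $Q$ and $C$ given here. I would verify this by expanding $(g+k)^{-1}$ explicitly as above. The rest is bookkeeping of powers of $s$.
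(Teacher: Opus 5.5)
Your proposal is correct and is essentially the paper's argument: apply Proposition~\ref{curvature.tensor.of.perturbed.metric} with $\bar g = g$ and perturbed metric $g_s = g + k$, $k = s\,h^{(1)} + s^2 h^{(2)}$. Then $\bar D g_s = \bar D k$, so the bracketed first-derivative terms are $2\,Tk$ and the curvature and second-derivative terms are exactly $Lk$; expanding $g_s^{-1} = g^{-1} - g^{-1}kg^{-1} + O(s^2)$ produces $Q(k,k)$ and $C(k,k,k)$, and collecting powers of $s$ finishes the proof. One minor correction: the proposition is an exact identity in which $g_s^{-1}$ appears, not itself an expansion. So the inverse-metric expansion you present as a normalization check is in fact the whole content of the deduction.
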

Let $m_i$, $1 \leq i\leq 4$, denote the vector fields on $M_{\text{\rm nondeg}}$ defined in Section~\ref{sec:Mueter.metric}.

Given a point $y \in M_{\text{\rm nondeg}}$ and a vector $z \in \mathbb{R}^4$, we consider the two-plane $\text{\rm span}\{m_3(z),m_4(z)\} \subset T_y M$, where 
\[ m_3(z)=m_3+z^1 m_1+z^3 m_2,\ m_4(z)=m_4+z^4 m_1+z^2 m_2. \] 
In the following, we assume that $s_0$ is sufficiently small so that $g_s$ is a Riemannian metric on $M$ for $|s|\leq s_0$. We define a function $v: M_{\text{\rm nondeg}}\times\mathbb{R}^4\times [-s_0,s_0]$ through
\[v(y,z,s)= R_{g_s}(m_3(z),m_4(z),m_3(z),m_4(z))\ \text{\rm at}\ y.\]
Let $v^{(k)}(y,z)$, $0\leq k\leq 3$, be the coefficients in the Taylor expansion of $v(y,z,s)$ with respect to $s$ as in Section~\ref{framework}.
Let $V^{(2)}(y)$ and $V^{(3)}(y)$ be defined as in Section~\ref{framework}.

For each fixed $y \in M_{\text{\rm nondeg}}$, the function 
\[z \mapsto v^{(k)}(y,z)=R^{(k)}(m_3(z),m_4(z),m_3(z),m_4(z))\]
is a polynomial in $z\in\mathbb{R}^4$ of degree at most $4$. It is then straightforward to compute $v^{(k)}(y,0)$, $(v^{(k)}_{z^\alpha}(y,0))_{1\leq \alpha\leq 4}$, and $(v^{(k)}_{z^\alpha z^\beta}(y,0))_{1\leq \alpha,\beta\leq 4}$. 
It is convenient to define projection operators $P_0$, $P_1$, and $P_2$ as follows.
Given a $(0,4)$-tensor $S$ on $M$, we define $S_{ijkl}=S(m_i,m_j,m_k,m_l)$ at each point on $M_{\text{\rm nondeg}}$. We define the projection operators
$$P_0S=S_{3434},$$
$$ P_1S= 2 \, (S_{1434},S_{3234},S_{2434},S_{3134}), $$
and
\[P_2S = 2 \begin{bmatrix} S_{1414} & S_{1234}+S_{1432} & S_{1424} & S_{1431}\\
S_{1234}+S_{1432} & S_{3232} & S_{2432} & S_{3132}\\
S_{1424} & S_{2432} & S_{2424} & S_{2134}+S_{2431}\\
S_{1431} & S_{3132} & S_{2134}+S_{2431} & S_{3131} \end{bmatrix}.\] 
Note that $P_0 S$ is a scalar function on $M_{\text{\rm nondeg}}$, $P_1 S$ is a function on $M_{\text{\rm nondeg}}$ taking values in $\mathbb{R}^4$, and $P_2 S$ is a function on $M_{\text{\rm nondeg}}$ taking values in $\mathbb{R}^{4 \times 4}$. With this understood,
\[v^{(k)}(y,0)=P_0R^{(k)},\]
\[(v^{(k)}_{z^\alpha}(y,0))_{1\leq \alpha\leq 4}=P_1R^{(k)},\]
and
\[(v^{(k)}_{z^\alpha z^\beta}(y,0))_{1\leq \alpha,\beta\leq 4}=P_2R^{(k)}\]
for each $k$. In particular, in view of Lemma~\ref{curvature.tensor.expansion}, we have
\[v^{(1)}(y,0)=P_0Lh^{(1)},\] 
\[v^{(2)}(y,0)=P_0Lh^{(2)}+P_0Q(h^{(1)},h^{(1)}),\] 
\[v^{(3)}(y,0)=2P_0Q(h^{(1)},h^{(2)})+P_0C(h^{(1)},h^{(1)},h^{(1)}),\]
\[(v^{(1)}_{z^\alpha}(y,0))_{1\leq \alpha\leq 4}=P_1Lh^{(1)},\]
\[ (v^{(2)}_{z^\alpha}(y,0))_{1\leq \alpha\leq 4}=P_1Lh^{(2)}+P_1Q(h^{(1)},h^{(1)}), \]
\[ (v^{(0)}_{z^\alpha z^\beta}(y,0))_{1\leq \alpha,\beta\leq 4}=P_2R^{(0)}, \]
\[ (v^{(1)}_{z^\alpha z^\beta}(y,0))_{1\leq \alpha,\beta\leq 4}=P_2Lh^{(1)}. \] 
For later convenience, we write $H = P_2R^{(0)}$. Given a symmetric $(0,2)$-tensor $h$, we write $r(h) = P_1Lh$, and we define 
\[z(h)=(z^1(h),z^2(h),z^3(h),z^4(h)) \]
by
\[ \sum_{\beta=1}^4 H_{\alpha\beta}z^\beta(h)=-r_\alpha(h).\]
Then
\[(v^{(1)}_{z^\alpha}(y,0))_{1\leq \alpha\leq 4}=r(h^{(1)}),\] 
\[ (v^{(2)}_{z^\alpha}(y,0))_{1\leq \alpha\leq 4}=r(h^{(2)})+P_1Q(h^{(1)},h^{(1)}),\]
and 
\[(\bar{z}^{\alpha}(y))_{1\leq \alpha\leq 4}=z(h^{(1)}).\]

\begin{lemma}\label{Vk.formula}
We have
\[ V^{(2)}=P_0Lh^{(2)}+P_0Q(h^{(1)},h^{(1)})+\frac{1}{2}r(h^{(1)}) \cdot z (h^{(1)})\] 
and 
\begin{align*}
V^{(3)}=&2P_0Q(h^{(1)},h^{(2)})+r(h^{(2)})\cdot z(h^{(1)})\\
&+P_0C(h^{(1)},h^{(1)},h^{(1)})+P_1Q(h^{(1)},h^{(1)})\cdot z(h^{(1)})\\
&+\frac{1}{2}  z(h^{(1)})^T\cdot P_2Lh^{(1)}\cdot z(h^{(1)})\\
&+\frac{1}{6}\sum_{\alpha,\beta,\gamma=1}^4 v^{(0)}_{z^\alpha z^\beta z^\gamma}(y,0) z^{\alpha}(h^{(1)})z^{\beta}(h^{(1)})z^{\gamma}(h^{(1)}).
\end{align*}
\end{lemma}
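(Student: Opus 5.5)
This is a direct substitution. I will take the abstract definitions of $V^{(2)}$ and $V^{(3)}$ from Section~\ref{framework} and replace each ingredient by its expression in terms of the projections $P_0,P_1,P_2$ and the operators $L,Q,C$. The identifications listed just before the statement do most of the work. In particular, the framework requires $v^{(0)}(y,0)=0$ and $v^{(1)}(y,0)=0$. These hold because $R^{(0)}$ vanishes on the plane $\text{span}\{m_3,m_4\}$, which is the zero-curvature plane of $\mathcal{Z}_y$ for $g$, and because we assume (as in the framework) that $h^{(1)}$ is chosen so that $P_0Lh^{(1)}=0$.

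For $V^{(2)}$, recall $V^{(2)}=v^{(2)}(y,0)+\frac12\sum_\alpha v^{(1)}_{z^\alpha}(y,0)\,\bar z^\alpha(y)$. I substitute $v^{(2)}(y,0)=P_0Lh^{(2)}+P_0Q(h^{(1)},h^{(1)})$, which comes from Lemma~\ref{curvature.tensor.expansion}. I also substitute $(v^{(1)}_{z^\alpha}(y,0))=r(h^{(1)})$ and $\bar z=z(h^{(1)})$. The second identification holds because (\ref{definition.of.bar.z}) reads $H\bar z=-r(h^{(1)})$ with $H=P_2R^{(0)}=(v^{(0)}_{z^\alpha z^\beta}(y,0))$, and this is exactly the defining equation of $z(h^{(1)})$. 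This gives the first formula.

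For $V^{(3)}$, I treat its four terms in turn.
\begin{itemize}
\item The first term is $v^{(3)}(y,0)=P_0R^{(3)}=2P_0Q(h^{(1)},h^{(2)})+P_0C(h^{(1)},h^{(1)},h^{(1)})$, again by Lemma~\ref{curvature.tensor.expansion}. Here I note that $g_s$ has no cubic term, so $R^{(3)}$ contains no $Lh^{(3)}$.
\item The second term is $\sum_\alpha v^{(2)}_{z^\alpha}\bar z^\alpha$. It becomes $\bigl(r(h^{(2)})+P_1Q(h^{(1)},h^{(1)})\bigr)\cdot z(h^{(1)})$, using $P_1R^{(2)}=P_1Lh^{(2)}+P_1Q(h^{(1)},h^{(1)})$.
\item The third term is $\frac12\sum v^{(1)}_{z^\alpha z^\beta}\bar z^\alpha\bar z^\beta$. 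It equals $\frac12 z(h^{(1)})^T P_2Lh^{(1)}z(h^{(1)})$.
\item The cubic term in $v^{(0)}$ is left unchanged, with $\bar z$ replaced by $z(h^{(1)})$.
\end{itemize}
Collecting these terms gives the stated expression.

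The only step that needs care is justifying the identifications $v^{(k)}(y,0)=P_0R^{(k)}$, $\nabla_z v^{(k)}=P_1R^{(k)}$ and $\nabla_z^2 v^{(k)}=P_2R^{(k)}$. To check them, I expand $R^{(k)}(m_3(z),m_4(z),m_3(z),m_4(z))$ multilinearly, using the symmetries of an algebraic curvature tensor. For example, the coefficient of $z^1$ is $2S_{1434}$, which comes from the two slots occupied by $m_3$. The mixed second derivatives require pairing the terms appropriately. For instance, $z^1z^2$ arises from $S(m_1,m_2,m_3,m_4)+S(m_1,m_4,m_3,m_2)$ together with their symmetric counterparts, and this gives $2(S_{1234}+S_{1432})$.

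These identifications use the curvature symmetries. The tensors $Lh$, $Q$ and $C$ produced by Proposition~\ref{curvature.tensor.of.perturbed.metric} possess these symmetries because they are Taylor coefficients of genuine curvature tensors. The identifications were asserted in the text before the lemma, so I would simply cite them.
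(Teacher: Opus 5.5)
Your proposal is correct and follows the paper's proof: the paper likewise substitutes $v^{(k)}(y,0)=P_0R^{(k)}$, $(v^{(k)}_{z^\alpha}(y,0))=P_1R^{(k)}$, $(v^{(k)}_{z^\alpha z^\beta}(y,0))=P_2R^{(k)}$, $\bar z=z(h^{(1)})$ and Lemma~\ref{curvature.tensor.expansion} term by term into the definitions of $V^{(2)}$ and $V^{(3)}$. One small remark: you do not need to assume $v^{(1)}(y,0)=P_0Lh^{(1)}=0$, because the lemma is a purely formal identity and only uses that $H$ is invertible, so that $z(h^{(1)})$ is defined.
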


\textbf{Proof.}
The quantity $V^{(2)}(y)$ is defined by
\[V^{(2)}(y) = v^{(2)}(y,0) + \frac{1}{2} \sum_{\alpha=1}^4 v^{(1)}_{z^\alpha}(y,0) \, \bar{z}^\alpha(y).\] 
From the above discussion,
\[v^{(2)}(y,0)=P_0Lh^{(2)}+P_0Q(h^{(1)},h^{(1)})\]
and
\[\frac{1}{2} \sum_{\alpha=1}^4 v^{(1)}_{z^\alpha}(y,0) \, \bar{z}^\alpha(y)=\frac{1}{2} r(h^{(1)})\cdot z(h^{(1)}).\]
Putting these identities together gives the formula for $V^{(2)}$.

The quantity $V^{(3)}(y)$ is defined by
\begin{align*} 
V^{(3)}(y) 
&= v^{(3)}(y,0) + \sum_{\alpha=1}^4 v^{(2)}_{z^\alpha}(y,0) \, \bar{z}^\alpha(y) + \frac{1}{2} \sum_{\alpha,\beta=1}^4 v^{(1)}_{z^\alpha z^\beta}(y,0) \, \bar{z}^\alpha(y) \, \bar{z}^\beta(y) \\ 
&+ \frac{1}{6} \sum_{\alpha,\beta,\gamma=1}^4 v^{(0)}_{z^\alpha z^\beta z^\gamma}(y,0) \, \bar{z}^\alpha(y) \, \bar{z}^\beta(y) \, \bar{z}^\gamma(y). 
\end{align*}
From the above discussion,
\[v^{(3)}(y,0)=2P_0Q(h^{(1)},h^{(2)})+P_0C(h^{(1)},h^{(1)},h^{(1)}),\]
\[\sum_{\alpha=1}^4 v^{(2)}_{z^\alpha}(y,0) \, \bar{z}^\alpha(y)= r(h^{(2)})\cdot z(h^{(1)})+P_1Q(h^{(1)},h^{(1)})\cdot z(h^{(1)}),\]
and
\[\frac{1}{2} \sum_{\alpha,\beta=1}^4 v^{(1)}_{z^\alpha z^\beta}(y,0) \, \bar{z}^\alpha(y) \, \bar{z}^\beta(y)= \frac{1}{2}  z(h^{(1)})^T\cdot P_2Lh^{(1)}\cdot z(h^{(1)}).\]
Putting these together yields the formula for $V^{(3)}$. This completes the proof of Lemma \ref{Vk.formula}. \\

\section{A second order deformation of the Cheeger-M\"uter metric}

\label{second.order.deformation}

In this section, we define a particular second order deformation of the metric $g = \frac{1}{2} \, g_{\text{\rm CM},1}$, where $g_{\text{\rm CM},1}$ denotes the Cheeger-M\"uter metric. We work primarily in the coordinates given by \eqref{identify.s2xs2.SO(3)} and must address whether the objects defined in these coordinates extend smoothly to all of $M$.

Note that $\langle p_1,p_2\rangle=\cos(2\theta)$.
Since $\cos(2n\theta)$ can be expressed as a polynomial of degree $n$ in $\cos(2\theta)$ for every $n\in\mathbb{N}$, any linear combination of $\cos(2n\theta)$ extends uniquely to a globally smooth function on $M$.
We abuse notation and denote this extension by the same expression.

Recall that $K_\omega$, $J_\omega$, and, more generally, $K_{\tilde{\omega}}$ are vector fields defined at the beginning of Section~\ref{sec:Mueter.metric}.
Let $K^\flat_\omega$, $J^\flat_\omega$, and $K^\flat_{\tilde{\omega}}$ be their associated one-forms with respect to the metric $g$:
\[
K^\flat_\omega(X)=g(K_\omega,X),\ J^\flat_\omega(X)=g(J_\omega,X),\ K^\flat_{\tilde{\omega}}(X)=g(K_{\tilde{\omega}},X).
\]
Let $e_x, e_y, e_z$ be the standard basis of $\mathbb{R}^3$.
For two one-forms $\alpha,\beta$, write $\alpha\odot \beta$ for the symmetric tensor product
\[
\alpha\odot\beta=\frac{1}{2}(\alpha\otimes \beta+\beta\otimes \alpha).
\]
With these preparations, we are ready to define $h^{(1)}$ and $h^{(2)}$.

We define
\[h^{(1)}_a=\frac{6}{5} (2- \cos(2\theta) ) K^\flat_{p_1+p_2}\odot K^\flat_{e_x}-2(1-\cos(2\theta))(2+\cos(2\theta)) K^\flat_{p_2-p_1}\odot J^\flat_{e_x},\] 
\[h^{(1)}_b=\frac{6}{5} (2+\cos(2\theta)) K^\flat_{p_2-p_1}\odot K^\flat_{e_y}-2(1+\cos(2\theta))(2-\cos(2\theta)) K^\flat_{p_1+p_2}\odot J^\flat_{e_y},\]
\[h^{(1)}_c= K^\flat_{e_z}\odot K^\flat_{e_z},\]
\[h^{(1)}_d= K^\flat_{e_x}\odot J^\flat_{e_z}.\]
From the above discussion, $h^{(1)}_a,\ h^{(1)}_b,\ h^{(1)}_c,\ h^{(1)}_d$ are globally smooth symmetric $(0,2)$-tensors on $M$.
The first order perturbation $h^{(1)}$ is defined by
\begin{equation}\label{def.h1}
h^{(1)}=h^{(1)}_a+h^{(1)}_b+\lambda_c h^{(1)}_c+\lambda_d h^{(1)}_d,
\end{equation}
where $\lambda_c$ and $\lambda_d$ are constants to be determined later.

The second order perturbation $h^{(2)}$ consists of ten components, which we define as follows. Let
\begin{align*} 
f_{aa} 
&= -\frac{2203841 \cos(2\theta )}{1920}+\frac{5579141 \cos(4\theta )}{153600}-\frac{214597 \cos(6\theta)}{103680} \\ 
&+\frac{48211 \cos(8 \theta )}{307200}-\frac{251 \cos(10\theta)}{19200}+\frac{25 \cos(12\theta)}{165888} \\ 
&-\frac{42025}{6} \sqrt{\frac{5}{3}} \, \sum_{n=1}^\infty \frac{(-4+\sqrt{15})^n\cos(2n\theta)}{n^2}. 
\end{align*}
We define
\[h^{(2)}_{aa}=9 f_{aa}\, K^\flat_{e_x}\odot K^\flat_{e_x}.\] 
Let
\begin{align*}
f_{bb} 
&= \frac{2203841 \cos(2\theta )}{1920}+\frac{5579141 \cos(4\theta )}{153600}+\frac{214597 \cos(6\theta)}{103680} \\
&+\frac{48211 \cos(8\theta)}{307200}+\frac{251 \cos(10\theta)}{19200}+\frac{25 \cos(12\theta)}{165888} \\
&-\frac{42025}{6} \sqrt{\frac{5}{3}} \, \sum_{n=1}^\infty \frac{(4-\sqrt{15})^n \cos(2n\theta)}{n^2}.
\end{align*}
We define
\[h^{(2)}_{bb}=9 f_{bb}\, K^\flat_{e_y}\odot K^\flat_{e_y}.\] 
We further define
\[h^{(2)}_{ab}= 0,\] 
\[h^{(2)}_{cc}= g(K_{e_z},K_{e_z}) K^\flat_{e_z}\odot K^\flat_{e_z},\]
\[h^{(2)}_{ac}=\frac{1}{20}\langle p_1+p_2,e_z  \rangle(-5-14\cos(2\theta)+3\cos(4\theta))K^\flat_{e_x}\odot K^\flat_{e_z},\]
\[h^{(2)}_{bc}=\frac{1}{20}\langle p_2-p_1,e_z \rangle(-5+14\cos(2\theta)+3\cos(4\theta))K^\flat_{e_y}\odot K^\flat_{e_z},\] 
\[h^{(2)}_{dd}=\frac{1}{12} g(J_{e_x},J_{e_x}) J^\flat_{e_z}\odot J^\flat_{e_z},\] 
\begin{align*}
h^{(2)}_{ad} 
&= \frac{9}{80}\langle p_2-p_1,e_x \rangle \Big ( -\frac{406}{45} -\frac{92}{45} \cos (2 \theta )+\frac{2}{5} \cos (4 \theta ) \Big ) K^\flat_{e_x}\odot K^\flat_{e_z} \\
&+\frac{9}{2}\langle p_2-p_1,e_z  \rangle \Big ( -\frac{823}{900}+ \frac{77}{450} \cos (2 \theta )+\frac{1}{100} \cos (4 \theta ) \Big ) K^\flat_{e_x}\odot K^\flat_{e_x},
\end{align*}
\begin{align*}
h^{(2)}_{bd}
&= \frac{9}{80}\langle p_1+p_2,e_x \rangle \Big ( -\frac{406}{45}+\frac{92}{45} \cos (2 \theta )+\frac{2}{5} \cos (4 \theta ) \Big ) K^\flat_{e_y}\odot K^\flat_{e_z}\\
&+\frac{9}{2}\langle p_1+p_2,e_z \rangle \Big ( -\frac{823}{900} -\frac{77}{450} \cos (2 \theta )+\frac{1}{100} \cos (4 \theta ) \Big ) K^\flat_{e_x}\odot K^\flat_{e_y}.
\end{align*}
We define a smooth function $G: (-1,1) \to \mathbb{R}$ by $G(0)=1$ and 
\[G(x) = \frac{\arctan(x)}{x} = \sum_{n=0}^\infty (-1)^n \, \frac{x^{2n}}{2n+1}\] 
for $x \in (-1,1) \setminus \{0\}$. Then the function 
\[G \Big ( \frac{\sin(2\theta)}{\sqrt{3}} \Big ) = \sum_{n=0}^\infty (-3)^{-n} \, \frac{\sin^{2n}(2\theta)}{2n+1} = \sum_{n=0}^\infty (-3)^{-n} \, \frac{(1-\cos^2(2\theta))^n}{2n+1}\] 
extends to a smooth function on $M$. We define
\begin{align*}
h^{(2)}_{cd} 
&=-\frac{1}{8} (\langle p_1+p_2,e_z  \rangle\langle p_2-p_1,e_x  \rangle+\langle p_1+p_2,e_x  \rangle\langle p_2-p_1,e_z  \rangle) \\ 
&\hspace{16mm} \cdot G \Big ( \frac{\sin(2\theta)}{\sqrt{3}} \Big ) K^\flat_{e_z}\odot K^\flat_{e_z}\\
&+\frac{\langle p_1+p_2,e_z  \rangle\langle p_2-p_1,e_z  \rangle}{2(4-\cos^2(2\theta))}K^\flat_{e_x}\odot K^\flat_{e_z}.
\end{align*}
From the above discussion, $h^{(2)}_{aa}$, $h^{(2)}_{bb}$, $h^{(2)}_{ab}$, $h^{(2)}_{cc}$, $h^{(2)}_{ac}$, $h^{(2)}_{bc}$, $h^{(2)}_{dd}$, $h^{(2)}_{ad}$, $h^{(2)}_{bd}$, and $h^{(2)}_{cd}$ are globally smooth symmetric $(0,2)$-tensors on $M$.
Finally, we set
\begin{align}\label{def.h2}
h^{(2)} 
&=h^{(2)}_{aa}+h^{(2)}_{bb}+2 h^{(2)}_{ab}+\lambda_c^2h^{(2)}_{cc}+2\lambda_c h^{(2)}_{ac}+2\lambda_c h^{(2)}_{bc} \notag \\ 
&+\lambda_d^2 h^{(2)}_{dd}+2\lambda_d h^{(2)}_{ad}+2\lambda_d h^{(2)}_{bd}+2\lambda_c\lambda_d h^{(2)}_{cd}.
\end{align}
After these preparations, we define 
\[g_s = g + s \, h^{(1)} + s^2 \, h^{(2)}.\] 
Given a point $y \in M_{\text{\rm nondeg}}$ and a vector $z \in \mathbb{R}^4$, we consider the two-plane $\text{\rm span}\{m_3(z),m_4(z)\} \subset T_y M$, where 
\[ m_3(z)=m_3+z^1 m_1+z^3 m_2,\ m_4(z)=m_4+z^4 m_1+z^2 m_2. \] 
We define a function $v: M_{\text{\rm nondeg}} \times \mathbb{R}^4 \times [-s_0,s_0] \to \mathbb{R}$ by 
\[v(y,z,s) = R_{g_s}(m_3(z),m_4(z),m_3(z),m_4(z))\] 
for $y \in M_{\text{\rm nondeg}}$, $z \in \mathbb{R}^4$, and $s \in [-s_0,s_0]$. We may write 
\begin{equation} 
\label{Taylor.expansion.of.v}
v(y,z,s) = v^{(0)}(y,z) + s \, v^{(1)}(y,z) + s^2 \, v^{(2)}(y,z) + s^3 \, v^{(3)}(y,z) + O(s^4), 
\end{equation}
where $v^{(0)},v^{(1)},v^{(2)},v^{(3)}: M_{\text{\rm nondeg}} \times \mathbb{R}^4 \to \mathbb{R}$ are smooth functions. Clearly, $v^{(0)}(y,0) = 0$ for all $y \in M_{\text{\rm nondeg}}$.

\begin{proposition}
\label{first.order.change.in.sectional.curvature}
We have $v^{(1)}(y,0) = 0$ for all $y \in M_{\text{\rm nondeg}}$.
\end{proposition}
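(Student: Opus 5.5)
The plan is a direct computation of $v^{(1)}(y,0)=P_0Lh^{(1)}=(Lh^{(1)})(m_3,m_4,m_3,m_4)$. Since $h\mapsto Lh$ is linear, it is enough to show that $P_0Lh^{(1)}_a$, $P_0Lh^{(1)}_b$, $P_0Lh^{(1)}_c$ and $P_0Lh^{(1)}_d$ each vanish on $M_{\text{\rm nondeg}}$. This makes the statement independent of $\lambda_c$ and $\lambda_d$, as it must be, since those constants are fixed only later.

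\emph{Step 1: the curvature terms in $Lh$ drop out.} Evaluating the zeroth-order terms of $Lh$ on $(m_3,m_4,m_3,m_4)$ gives terms of the form $R(m_3,m_4,m_3,p)\,h(p,m_4)$ and $R(m_3,m_4,m_4,p)\,h(m_3,p)$, with $p$ contracted. The metric $g$ has nonnegative sectional curvature, and $\text{\rm span}\{m_3,m_4\}$ is a zero-curvature plane; this is the $z=0$ plane of $v^{(0)}$, where $v^{(0)}(y,0)=0$. At a minimum of the sectional curvature the first variation vanishes, so $R(m_3,m_4)m_3=0$ and $R(m_3,m_4)m_4=0$. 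Hence all curvature terms vanish, and
\[
P_0Lh=-(D^2_{m_3,m_3}h)(m_4,m_4)-(D^2_{m_4,m_4}h)(m_3,m_3)+(D^2_{m_3,m_4}h)(m_3,m_4)+(D^2_{m_4,m_3}h)(m_3,m_4).
\]

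\emph{Step 2: compute the connection and the frame components.} On $M_{\text{\rm nondeg}}\cong SO(3)\times(0,\frac{\pi}{2})$, the metric $g$ is diagonal in the frame $E_1,E_2,E_3,\partial_\theta$. Its coefficients depend only on $\theta$, and $[E_i,E_j]=\pm E_k$ (structure constants $\varepsilon_{ijk}$, up to the sign convention). So the Koszul formula gives every $D_{m_i}m_j$ explicitly as combinations of the $m_k$ with coefficients that are functions of $\theta$.

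Next I express each $h^{(1)}_\bullet$ in the frame $m_i$. Using \eqref{K.omega.in.mi} and \eqref{J.omega.in.mi} together with $|m_i|_g^2$, the one-forms $K^\flat_\omega$ and $J^\flat_\omega$ become explicit combinations of the $m_i^\flat$. Their coefficients involve $\langle q_i,\omega\rangle$ and functions of $\theta$. Moreover, $p_1+p_2=2\cos\theta\,q_1$ and $p_2-p_1=2\sin\theta\,q_2$.

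Finally, I compute the derivatives of the coefficient functions $\langle q_i,e\rangle$ using $E_iq_j=\sum_k\varepsilon_{ijk}q_k$ and $\partial_\theta q_j=0$. Only derivatives along $m_3=E_3$ and $m_4=\partial_\theta$ are needed, and these act very simply: $E_3$ rotates $q_1,q_2$ into each other and fixes $q_3$, while $\partial_\theta$ acts only on the $\theta$-dependence.

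\emph{Step 3: assemble each piece.} For each of the four pieces I expand the second covariant derivatives in Step 1 through the frame identity $(D^2_{X,Y}h)=X(Y(h))-\dots$, and collect the result as a polynomial in $\langle q_i,e_x\rangle,\langle q_i,e_y\rangle,\langle q_i,e_z\rangle$ with $\theta$-dependent coefficients.

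For $h^{(1)}_c=K^\flat_{e_z}\odot K^\flat_{e_z}$ and $h^{(1)}_d=K^\flat_{e_x}\odot J^\flat_{e_z}$, I expect the contributions to cancel for structural reasons. Along the plane $\text{\rm span}\{m_3,m_4\}$, the relevant components pair $m_3$ with $m_3$, or $m_3$ with $m_4$ via the $\langle q_3,\cdot\rangle$ coefficients. The Killing property of $K_\omega$ for the diagonal action (which is isometric for $g$) should make the $D^2K^\flat$ terms reduce to curvature terms, and these are killed by Step 1.

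For $h^{(1)}_a$ and $h^{(1)}_b$, the two summands do not vanish separately. The coefficients $\frac65(2\mp\cos 2\theta)$ and $2(1\mp\cos2\theta)(2\pm\cos2\theta)$ are precisely tuned so that the $K\odot K$ term and the $K\odot J$ term cancel. I would verify this by reducing everything to a trigonometric identity in $\theta$, after factoring out a common factor such as $\langle q_1,e_x\rangle$ or $\langle q_2,e_x\rangle$; the computation can be checked symbolically. By the symmetry $\theta\mapsto\frac{\pi}{2}-\theta$ (exchanging $q_1\leftrightarrow q_2$ and the roles of $\cos2\theta\mapsto-\cos2\theta$), the $b$-computation should mirror the $a$-computation.

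The main obstacle is Step 3 for $h^{(1)}_a$ and $h^{(1)}_b$: there is genuine cancellation between the two summands, so the bookkeeping of Christoffel symbols and of the derivatives of the $\theta$-dependent coefficients must be exact. I would first carry out the computation in full for $h^{(1)}_a$, then obtain $h^{(1)}_b$ by the symmetry, and confirm both with the symbolic code.
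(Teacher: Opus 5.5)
Your Step 1 is correct and is a legitimate alternative to the paper's justification. You kill the curvature terms of $L$ by the first-variation argument at a zero-curvature plane of a nonnegatively curved metric, whereas the paper uses that each torus $\Gamma\times\Gamma$ is totally geodesic and flat. Your displayed formula is missing an overall factor $\frac12$, which is harmless.

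The gap is that Steps 2--3 never show why the remaining second-derivative terms vanish, and the mechanisms you predict are wrong. There is no cancellation between the $K\odot K$ and $K\odot J$ summands of $h^{(1)}_a$ or $h^{(1)}_b$: each summand contributes zero on its own, and the coefficients play no role in this proposition. Your Killing-field heuristic cannot handle $h^{(1)}_d$, because $J_{e_z}$ is not a Killing field of the Cheeger-deformed metric $g$. Even for $h^{(1)}_c$, the Killing identity together with Step 1 only removes the $D^2K^\flat_{e_z}$ terms. It leaves $P_0L(K^\flat_{e_z}\odot K^\flat_{e_z})=-3\,g(D_{m_3}K_{e_z},m_4)^2$, which still has to be shown to vanish.

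The missing idea is that the Koszul computation you plan in Step 2 gives $D_{m_i}m_j=0$ for all $i,j\in\{3,4\}$ on all of $M_{\text{\rm nondeg}}$. This holds because the frame $E_1,E_2,E_3,\partial_\theta$ is $g$-orthogonal, $g(E_3,E_3)=\frac13$ and $g(\partial_\theta,\partial_\theta)=1$ are constant, $[E_3,\partial_\theta]=0$, and $[E_3,E_k]\in\text{\rm span}\{E_1,E_2\}$. This is exactly the paper's statement that $m_3,m_4$ are parallel along the totally geodesic tori. Together with your Step 1, it follows that $(D^2_{X,Y}h)(Z,W)=X(Y(h(Z,W)))$ for $X,Y,Z,W\in\{m_3,m_4\}$. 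So $P_0Lh$ depends only on the three functions $h(m_3,m_3)$, $h(m_3,m_4)$, $h(m_4,m_4)$.

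By \eqref{K.omega.in.mi} and \eqref{J.omega.in.mi}, one has $K^\flat_\omega(m_3)=\frac13\langle q_3,\omega\rangle$, $K^\flat_\omega(m_4)=0$, $J^\flat_\omega(m_3)=0$, and $J^\flat_\omega(m_4)=-\langle q_3,\omega\rangle$. You already noted that $p_1+p_2=2\cos\theta\,q_1$ and $p_2-p_1=2\sin\theta\,q_2$, so both are orthogonal to $q_3$. Hence every summand of $h^{(1)}_a$ and $h^{(1)}_b$ vanishes identically on $\text{\rm span}\{m_3,m_4\}$. The only nonzero components of $h^{(1)}$ there are $h^{(1)}(m_3,m_3)=\frac{\lambda_c}{9}\langle q_3,e_z\rangle^2$ and $h^{(1)}(m_3,m_4)=-\frac{\lambda_d}{6}\langle q_3,e_x\rangle\langle q_3,e_z\rangle$. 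These depend only on $q_3$, which is annihilated by $m_3=E_3$ and $m_4=\partial_\theta$, as you observed ($E_3q_3=0$, $\partial_\theta q_3=0$). So every term vanishes, with no trigonometric identity or symbolic check required.
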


\textbf{Proof.} 
From the discussion in Section~\ref{sec:formula.metric.perturbation}, we have 
\[v^{(1)}(y,0)=P_0L h^{(1)}=(Lh^{(1)})(m_3,m_4,m_3,m_4).\] 
For each great circle $\Gamma$ in $S^2$, the torus $\Gamma \times \Gamma$ is totally geodesic and the vector fields $m_3$ and $m_4$ are tangential along $\Gamma \times \Gamma$. Moreover, $m_3$ and $m_4$ define parallel vector fields along $\Gamma \times \Gamma$. This implies
\begin{align*}
&(Lh^{(1)})(m_3,m_4,m_3,m_4) \\ 
&= -\frac{1}{2}m_3m_3h^{(1)}(m_4,m_4)-\frac{1}{2}m_4m_4h^{(1)}(m_3,m_3)+m_3m_4 h^{(1)}(m_3,m_4).
\end{align*}
Here $h^{(1)}(m_3,m_3)$, $h^{(1)}(m_4,m_4)$, and $h^{(1)}(m_3,m_4)$ are viewed as scalar functions, and $m_im_j$ denotes the corresponding iterated directional derivatives.

In view of \eqref{K.omega.in.mi} and \eqref{J.omega.in.mi}, we have
\[
K^\flat_\omega(m_3)=\frac{1}{3}\langle q_3,\omega \rangle,\ K^\flat_\omega(m_4)=0,\ J^\flat_\omega(m_3)=0,\ J^\flat_\omega(m_4)=-\langle q_3,\omega \rangle.
\]
In particular,
\[
K^\flat_{p_1+p_2}(m_3)=0,\ K^\flat_{p_1+p_2}(m_4)=0,\ K^\flat_{p_2-p_1}(m_3)=0,\ K^\flat_{p_2-p_1}(m_4)=0. 
\] 
This implies 
\[h^{(1)}_a(m_3,m_3) = h^{(1)}_b(m_3,m_3) = 0,\] 
\[h^{(1)}_a(m_4,m_4) = h^{(1)}_b(m_4,m_4) = 0,\] 
and 
\[h^{(1)}_a(m_3,m_4) = h^{(1)}_b(m_3,m_4) = 0.\]
Consequently, 
\[h^{(1)}(m_3,m_3) = \lambda_c \, K^\flat_{e_z}(m_3) \, K^\flat_{e_z}(m_3) + \lambda_d \, K^\flat_{e_x}(m_3) \, J^\flat_{e_z}(m_3) = \frac{\lambda_c}{9} \, \langle q_3,e_z \rangle^2,\] 
\[h^{(1)}(m_4,m_4) = \lambda_c \, K^\flat_{e_z}(m_4) \, K^\flat_{e_z}(m_4) + \lambda_d \, K^\flat_{e_x}(m_4) \, J^\flat_{e_z}(m_4) = 0,\] 
and 
\begin{align*} 
h^{(1)}(m_3,m_4) 
&= \lambda_c \, K^\flat_{e_z}(m_3) \, K^\flat_{e_z}(m_4) \\ 
&+ \frac{\lambda_d}{2} \, K^\flat_{e_x}(m_3) \, J^\flat_{e_z}(m_4) + \frac{\lambda_d}{2} \, J^\flat_{e_z}(m_3) \, K^\flat_{e_x}(m_4) \\ 
&= -\frac{\lambda_d}{6} \, \langle q_3,e_x \rangle \, \langle q_3,e_z \rangle. 
\end{align*}
Finally, the directional derivative of $q_3$ along $m_3$ vanishes, and the directional derivative of $q_3$ along $m_4$ vanishes as well. Putting these facts together, we conclude that $(Lh^{(1)})(m_3,m_4,m_3,m_4) = 0$. This finally gives $v^{(1)}(y,0)=0$. A separate verification of the identity $v^{(1)}(y,0)=0$ is provided in the companion \textsc{Mathematica} notebook. This completes the proof of Proposition \ref{first.order.change.in.sectional.curvature}. \\


In the following, we consider the functions $V^{(2)}$ and $V^{(3)}$ defined in Section~\ref{sec:formula.metric.perturbation}. Recall that $V^{(2)}$ and $V^{(3)}$ are smooth functions on $M_{\text{\rm nondeg}}$. 

We consider the 2D torus $\Sigma = \{p \in M: \langle p_1,e_z \rangle = \langle p_2,e_z\rangle = 0\}$, where $e_z$ denotes the vertical unit vector in $\mathbb{R}^3$.

\begin{proposition}
\label{second.order.change.in.sectional.curvature}
There exist positive constants $\lambda_0$ and $\delta$ such that
\[V^{(2)}(y) \geq \delta \, \Big ( 1 - \frac{\langle y_1 \wedge y_2,e_z \rangle^2}{|y_1 \wedge y_2|^2} \Big )\] 
for each point $y = (y_1,y_2) \in M_{\text{\rm nondeg}}$ provided $|\lambda_d|\leq \lambda_0$. Moreover, $V^{(2)}(y) = 0$ for each point $y \in \Sigma \cap M_{\text{\rm nondeg}}$. 
\end{proposition}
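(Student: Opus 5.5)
We compute $V^{(2)}$ explicitly from Lemma~\ref{Vk.formula} and check the resulting pointwise inequality. Throughout we use the frame $m_1,\dots,m_4$ and the coordinates $(q_1,q_2,q_3,\theta)$ of \eqref{identify.s2xs2.SO(3)}. The key observation is that $\frac{y_1\wedge y_2}{|y_1\wedge y_2|}=q_3$. Since $(q_1,q_2,q_3)$ is orthonormal, this gives
\[1-\frac{\langle y_1\wedge y_2,e_z\rangle^2}{|y_1\wedge y_2|^2}=\langle q_1,e_z\rangle^2+\langle q_2,e_z\rangle^2 .\]
Moreover, $\Sigma\cap M_{\text{\rm nondeg}}$ is exactly the set $\{q_3=\pm e_z\}$. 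So the claim is a lower bound for a function of $\theta\in(0,\frac{\pi}{2})$ and of the nine entries $\langle q_i,e_j\rangle$, where $e_j$ runs over $e_x,e_y,e_z$.

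\textbf{Step 1 (expressing each term in the frame).} Using \eqref{K.omega.in.mi} and \eqref{J.omega.in.mi}, we express every building block $K^\flat_\omega$, $J^\flat_\omega$, $K^\flat_{p_1\pm p_2}$ in the coframe dual to $m_i$. The coefficients are polynomials in $\langle q_i,e_j\rangle$ and trigonometric functions of $\theta$. Covariant derivatives are computed from the connection of the explicit metric $g$, using the structure equations $d\sigma^i=-\frac12\varepsilon_{ijk}\sigma^j\wedge\sigma^k$ together with $E_i\langle q_j,\omega\rangle=\varepsilon_{ijk}\langle q_k,\omega\rangle$.

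With these ingredients we compute, in the same frame:
\begin{itemize}
\item $H=P_2R^{(0)}$, which is positive definite on $M_{\text{\rm nondeg}}$ because there is exactly one flat plane there;
\item $r(h)=P_1Lh$ and $z(h)=-H^{-1}r(h)$;
\item $P_0Q(h^{(1)},h^{(1)})$ and $P_0Lh^{(2)}$.
\end{itemize}

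\textbf{Step 2 (splitting $V^{(2)}$ by coefficients).} The expression for $V^{(2)}$ in Lemma~\ref{Vk.formula} is quadratic in $h^{(1)}$ and linear in $h^{(2)}$. The definition \eqref{def.h2} is organised so that it matches the expansion of $h^{(1)}=h^{(1)}_a+h^{(1)}_b+\lambda_ch^{(1)}_c+\lambda_dh^{(1)}_d$. We therefore write
\[V^{(2)}=\sum_{\mathrm{I}\le \mathrm{II}} c_{\mathrm{I}}c_{\mathrm{II}}\,W_{\mathrm{I}\,\mathrm{II}},\qquad c_a=c_b=1,\ c_c=\lambda_c,\ c_d=\lambda_d .\]
Each $W_{\mathrm{I}\,\mathrm{II}}$ collects the bilinear terms $P_0Q(h^{(1)}_{\mathrm I},h^{(1)}_{\mathrm{II}})+\frac12 r(h^{(1)}_{\mathrm I})\cdot z(h^{(1)}_{\mathrm{II}})$ (symmetrised) together with the matching $P_0Lh^{(2)}_{\mathrm I\,\mathrm{II}}$.

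I expect the functions $f_{aa},f_{bb}$ to have been designed so that $P_0Lh^{(2)}_{aa}$ reduces, in the $\theta$-variable, to a second-order ODE whose solution cancels the singular or $\theta$-dependent parts of the $aa$- and $bb$-terms. The series $\sum_n (\mp4\pm\sqrt{15})^n\cos(2n\theta)/n^2$ is a dilogarithm-type solution of such an ODE, and we verify this by differentiating termwise. Similarly, $h^{(2)}_{ac},h^{(2)}_{bc},h^{(2)}_{cd}$ should cancel the cross terms, and the function $G$ in $h^{(2)}_{cd}$ arises as the solution of a first-order ODE in $\theta$.

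\textbf{Step 3 (vanishing on $\Sigma$ and the main bound).} Every remaining term should carry a factor from $\{\langle q_1,e_z\rangle,\langle q_2,e_z\rangle\}$ in quadratic form. Once this is shown, $V^{(2)}=0$ on $\Sigma$ follows immediately.

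For the lower bound we first set $\lambda_d=0$ and choose $\lambda_c$ suitably; it appears as the parameter in the $cc$ and cross terms. We then show that $W:=V^{(2)}|_{\lambda_d=0}$ is a positive semidefinite quadratic form in the pair $(\langle q_1,e_z\rangle,\langle q_2,e_z\rangle)$. Its coefficients may depend on the remaining $q$-entries and on $\theta$. The aim is the bound
\[W\ge 2\delta\big(\langle q_1,e_z\rangle^2+\langle q_2,e_z\rangle^2\big),\]
uniformly in $\theta\in(0,\frac{\pi}{2})$ and in $q\in SO(3)$. Next, the terms $\lambda_dW_{ad}+\lambda_dW_{bd}+\lambda_c\lambda_dW_{cd}+\lambda_d^2W_{dd}$ also vanish quadratically on $\Sigma$. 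They should therefore be bounded by $C|\lambda_d|\big(\langle q_1,e_z\rangle^2+\langle q_2,e_z\rangle^2\big)$ for $|\lambda_d|\le1$. Taking $\lambda_0=\min\{1,\delta/C\}$ then yields the proposition.

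\textbf{Main obstacle.} The hard step is the uniformity of these bounds as $\theta\to0$ or $\theta\to\frac{\pi}{2}$. At those endpoints the frame $m_1,m_2$ degenerates, and $H$ loses definiteness in some directions, so $z(h^{(1)})=-H^{-1}r(h^{(1)})$ can blow up. To handle this, I would first check that each $r(h^{(1)}_{\mathrm I})$ vanishes at the endpoints to the same order as the corresponding eigenvalue of $H$; this is what the weights $(2\mp\cos2\theta)$ and $(1\mp\cos2\theta)$ in $h^{(1)}_a,h^{(1)}_b$ appear designed to achieve. After that, I would reduce the positivity to explicit one-variable inequalities in $\cos(2\theta)$: positivity of the discriminant and trace of a $2\times2$ coefficient matrix. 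These are polynomial or rational inequalities together with the convergent series, and I would verify them on $[0,\frac{\pi}{2}]$ by exact expansion at the endpoints and interval estimates in between. This verification would be done with the Mathematica notebook, as the paper does for Proposition~\ref{first.order.change.in.sectional.curvature}.
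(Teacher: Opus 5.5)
This is essentially the paper's argument: $V^{(2)}$ is split into the ten pieces matching \eqref{def.h2}, each piece is evaluated by computer algebra, and the result is cleaner than you expect---$V^{(2)}_{aa}+V^{(2)}_{bb}=c\,(1-\langle q_3,e_z\rangle^2)$ with the $\theta$-independent constant $c=-\frac{3472117}{384}+\frac{42025}{6}\sqrt{5/3}\approx 0.369$, the pieces $V^{(2)}_{ab},V^{(2)}_{cc},V^{(2)}_{ac},V^{(2)}_{bc},V^{(2)}_{ad},V^{(2)}_{bd},V^{(2)}_{cd}$ vanish identically, and $|V^{(2)}_{dd}|\le C(|\langle q_1,e_z\rangle|+|\langle q_2,e_z\rangle|)\,|\langle q_3,e_x\rangle|\le 2C(1-\langle q_3,e_z\rangle^2)$, so one takes $\delta=c/2$ and $\lambda_0=\sqrt{\delta/(2C)}$ with no endpoint analysis, discriminant conditions, or interval estimates. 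The one thing to correct is your plan to tune $\lambda_c$: every $\lambda_c$-dependent piece vanishes, so the bound holds for all $\lambda_c$ with constants independent of $\lambda_c$, and you need this freedom later because $(\lambda_c,\lambda_d)$ must also lie in the open dense set of Proposition~\ref{third.order.change.in.sectional.curvature.integral.version}, which need not meet a fixed line $\{\lambda_c=\mathrm{const}\}$.
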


\textbf{Proof.} 
From Lemma~\ref{Vk.formula}, we have 
\[
V^{(2)}=P_0L h^{(2)}+P_0Q(h^{(1)},h^{(1)})+\frac{1}{2}r(h^{(1)})\cdot z(h^{(1)}).
\]
Here $P_0$, $L$, $Q$, $r$ and $z$ are operators defined in Section~\ref{sec:formula.metric.perturbation}.
All these operators are linear, except for $Q$, which is bilinear. 
Therefore, using \eqref{def.h1} and \eqref{def.h2}, we can expand $V^{(2)}$ into a sum of terms.
We define
\[V^{(2)}_{aa}=P_0L h^{(2)}_{aa}+P_0Q(h^{(1)}_a,h^{(1)}_a)+\frac{1}{2} r(h^{(1)}_a)\cdot z(h^{(1)}_a),\] 
\[V^{(2)}_{bb}=P_0L h^{(2)}_{bb}+P_0Q(h^{(1)}_b,h^{(1)}_b)+\frac{1}{2} r(h^{(1)}_b)\cdot z(h^{(1)}_b),\] 
\[V^{(2)}_{ab}=P_0L h^{(2)}_{ab}+P_0Q(h^{(1)}_a,h^{(1)}_b)+\frac{1}{2} r(h^{(1)}_a)\cdot z(h^{(1)}_b),\] 
\[V^{(2)}_{cc}=P_0L h^{(2)}_{cc}+P_0Q(h^{(1)}_c,h^{(1)}_c)+\frac{1}{2} r(h^{(1)}_c)\cdot z(h^{(1)}_c),\] 
\[V^{(2)}_{ac}=P_0L h^{(2)}_{ac}+P_0Q(h^{(1)}_a,h^{(1)}_c)+\frac{1}{2} r(h^{(1)}_a)\cdot z(h^{(1)}_c),\]
\[V^{(2)}_{bc}=P_0L h^{(2)}_{bc}+P_0Q(h^{(1)}_b,h^{(1)}_c)+\frac{1}{2} r(h^{(1)}_b)\cdot z(h^{(1)}_c),\]
\[V^{(2)}_{dd}=P_0L h^{(2)}_{dd}+P_0Q(h^{(1)}_d,h^{(1)}_d)+\frac{1}{2} r(h^{(1)}_d)\cdot z(h^{(1)}_d),\]
\[V^{(2)}_{ad}=P_0L h^{(2)}_{ad}+P_0Q(h^{(1)}_a,h^{(1)}_d)+\frac{1}{2} r(h^{(1)}_a)\cdot z(h^{(1)}_d),\]
\[V^{(2)}_{bd}=P_0L h^{(2)}_{bd}+P_0Q(h^{(1)}_b,h^{(1)}_d)+\frac{1}{2} r(h^{(1)}_b)\cdot z(h^{(1)}_d),\]
\[V^{(2)}_{cd}=P_0L h^{(2)}_{cd}+P_0Q(h^{(1)}_c,h^{(1)}_d)+\frac{1}{2} r(h^{(1)}_c)\cdot z(h^{(1)}_d).\]
Then we have
\begin{align*}
V^{(2)}= &V^{(2)}_{aa}+V^{(2)}_{bb}+2V^{(2)}_{ab}+\lambda_c^2 V^{(2)}_{cc}+2\lambda_c V^{(2)}_{ac}+2\lambda_c V^{(2)}_{bc}\\
&+\lambda_d^2 V^{(2)}_{dd}+2\lambda_d V^{(2)}_{ad}+2\lambda_d V^{(2)}_{bd}+2\lambda_c\lambda_d V^{(2)}_{cd}.    
\end{align*}
Verifications of the ten identities below, \eqref{V2aa.value}--\eqref{V2cd.value}, are provided in the companion \textsc{Mathematica} notebook:
\begin{equation}\label{V2aa.value}
V^{(2)}_{aa}= \Big ( -\frac{3472117}{384}+\frac{42025}{6}\sqrt{\frac{5}{3}} \Big ) \langle q_3,e_x\rangle^2,
\end{equation}
\begin{equation}\label{V2bb.value}
V^{(2)}_{bb}= \Big ( -\frac{3472117}{384}+\frac{42025}{6}\sqrt{\frac{5}{3}} \Big ) \langle q_3,e_y\rangle^2,
\end{equation}
\begin{equation}\label{V2ab.value}
V^{(2)}_{ab}=0,
\end{equation}
\begin{equation}\label{V2cc.value}
V^{(2)}_{cc}=0,  
\end{equation}
\begin{equation}\label{V2ac.value}
V^{(2)}_{ac}=0,  
\end{equation}
\begin{equation}\label{V2bc.value}
V^{(2)}_{bc}=0,  
\end{equation}
\begin{align}\label{V2dd.value}
V^{(2)}_{dd} 
&= \frac{3+4\cos(2\theta)-\cos(4\theta)}{24 (2+\cos(2\theta))^3}\langle q_1,e_z \rangle^2\langle q_3,e_x \rangle^2 \notag \\ 
&+\frac{3-4\cos(2\theta)-\cos(4\theta)}{24 (2-\cos(2\theta))^3}\langle q_2,e_z \rangle^2\langle q_3,e_x \rangle^2 \notag \\
&-\frac{11\cos(2\theta)+\cos(6\theta)}{3(7-\cos(4\theta))^2} \langle q_1,e_x \rangle\langle q_1,e_z\rangle \langle q_3,e_x \rangle\langle q_3,e_z \rangle \\
&+\frac{11\cos(2\theta)+\cos(6\theta)}{3(7-\cos(4\theta))^2} \langle q_2,e_x \rangle\langle q_2,e_z \rangle \langle q_3,e_x \rangle\langle q_3,e_z \rangle, \notag
\end{align}
\begin{equation}\label{V2ad.value}
V^{(2)}_{ad}=0,
\end{equation}
\begin{equation}\label{V2bd.value}
V^{(2)}_{bd}=0,
\end{equation}
\begin{equation}\label{V2cd.value}
V^{(2)}_{cd}=0.
\end{equation}
From the above identities, it is clear that $V^{(2)}(y)=0$ for all $y\in\Sigma\cap M_{\text{\rm nondeg}}$. Using the identities 
\[\langle q_3,e_x \rangle^2+\langle q_3,e_y \rangle^2 = 1-\langle q_3,e_z \rangle^2\] 
and 
\[\langle q_1,e_z \rangle^2+\langle q_2,e_z \rangle^2 = 1-\langle q_3,e_z \rangle^2,\] 
we obtain 
\[V^{(2)}_{aa} + V^{(2)}_{bb} = \Big ( -\frac{3472117}{384}+\frac{42025}{6}\sqrt{\frac{5}{3}} \Big ) (1-\langle q_3,e_z \rangle^2)\] 
and 
\[|V^{(2)}_{dd}| \leq C \, (|\langle q_1,e_z \rangle| + |\langle q_2,e_z \rangle|) \, |\langle q_3,e_x \rangle| \leq 2C (1-\langle q_3,e_z \rangle^2).\] 
Note that $-\frac{3472117}{384}+\frac{42025}{6}\sqrt{\frac{5}{3}}\approx 0.369$ is a positive real number. Let 
\[\delta=\frac{1}{2} \Big ( -\frac{3472117}{384}+\frac{42025}{6}\sqrt{\frac{5}{3}} \Big ).\]
Then 
\[V^{(2)} \geq \delta (1 - \langle q_3,e_z \rangle^2)\] 
for $|\lambda_d| \leq \sqrt{\frac{\delta}{2C}}$. Since $q_3 = \frac{y_1 \wedge y_2}{|y_1 \wedge y_2|}$, the assertion follows. This completes the proof of Proposition~\ref{second.order.change.in.sectional.curvature}. \\

\begin{proposition}
\label{V3.extends.smoothly}
The function $V^{(3)}|_{\Sigma \cap M_{\text{\rm nondeg}}}$ extends to a smooth function on $\Sigma$. 
\end{proposition}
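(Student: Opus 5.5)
The plan is to deduce the statement from Theorem~\ref{smooth.extension}, which is the manifold version of Proposition~\ref{invariance}. The main work is to match the data of Section~\ref{second.order.deformation} to the hypotheses of that theorem.

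\textbf{Setting up $\mathcal{N}$, $\mathcal{Z}$ and $u$.} Take $\mathcal{N}$ to be the Grassmann bundle of two-planes and $\mathcal{Z}$ the zero-curvature set of Section~\ref{sec:Mueter.metric}. On $\mathcal{N}\times[-s_0,s_0]$ define
\[u(\pi,s) = \frac{R_{g_s}(X,Y,X,Y)}{g(X,X)\,g(Y,Y)-g(X,Y)^2},\qquad \pi=\text{\rm span}\{X,Y\}.\]
The denominator is taken with respect to the fixed background metric $g$, so $u$ is well defined and smooth. Expanding $R_{g_s}$ with Lemma~\ref{curvature.tensor.expansion} gives smooth coefficients $u^{(0)},\dots,u^{(3)}$. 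Here $u^{(0)}$ is the $g$-sectional curvature, so it is nonnegative and vanishes exactly on $\mathcal{Z}$.

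\textbf{Setting up $\Omega$, $\Phi$ and $\Upsilon$.} Each $y\in M_{\text{\rm nondeg}}$ has exactly one zero-curvature plane, $\text{\rm span}\{m_3,m_4\}$, which is the $g_{\text{\rm std}}$-orthogonal complement of $(q_3,0),(0,q_3)$. This identifies $M_{\text{\rm nondeg}}$ with $\Omega:=\mathcal{Z}_{\text{\rm nondeg}}$, which is open and dense in $\mathcal{Z}$. Set
\[\Phi(y,z)=\text{\rm span}\{m_3(z),m_4(z)\},\qquad \Upsilon(y,z)=|m_3(z)\wedge m_4(z)|_g^2 .\]
Then $v^{(k)}=\Upsilon\,(u^{(k)}\circ\Phi)$ are exactly the functions of Section~\ref{sec:formula.metric.perturbation}. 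Moreover $D\Phi_{(y,0)}$ is invertible: the four $z$-coordinates parametrize $\text{\rm Hom}(\pi,\pi^\perp)$, which is the fibre direction of $\mathcal{N}$ transverse to $\mathcal{Z}$, while $T_y\mathcal{Z}$ accounts for the remaining directions.

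\textbf{Checking the hypotheses.} We need $u^{(1)}|_{\mathcal{Z}}=0$ and $u^{(0)}\geq\sigma\,d(\cdot,\mathcal{Z})^2$.
\begin{itemize}
\item For the first, Proposition~\ref{first.order.change.in.sectional.curvature} gives $v^{(1)}(y,0)=0$, hence $u^{(1)}=0$ on $\Omega$. Density of $\Omega$ and continuity then give $u^{(1)}|_{\mathcal{Z}}=0$.
\item For the second, we need the normal Hessian of $u^{(0)}$ along $\mathcal{Z}$ to be nondegenerate, uniformly, including at planes over $\Delta_\pm$. On $\Omega$ this is positive definiteness of $H=P_2R^{(0)}$. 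Near $\Delta_\pm$ one needs M\"uter's analysis of the Cheeger-M\"uter metric: zero curvature planes occur exactly along $\mathcal{Z}$, with nondegenerate transverse Hessian. Compactness of $\mathcal{N}$ then yields a single $\sigma>0$.
\end{itemize}
I expect this quadratic lower bound near the degenerate fibres over $\Delta_\pm$ to be the main obstacle. I would first try to verify it by direct computation of the second variation of $g$-sectional curvature at the planes of $\mathcal{Z}_p$ for $p\in\Delta_\pm$, using the explicit submersion description of $g_{\text{\rm CM},1}$.

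\textbf{Applying Theorem~\ref{smooth.extension}.} Let $\Lambda=\Sigma\cap M_{\text{\rm nondeg}}\subset\Omega$. By Proposition~\ref{second.order.change.in.sectional.curvature}, $V^{(2)}\geq 0$ on $\Omega$ and $V^{(2)}=0$ on $\Lambda$. So every point of $\Lambda$ is an interior minimum of $V^{(2)}$, and $dV^{(2)}=0$ there. Theorem~\ref{smooth.extension} then says that $\Upsilon(\cdot,0)^{-1}V^{(3)}|_\Lambda$ is the restriction of a smooth function $F$ on $\mathcal{Z}$. Since $\Upsilon(y,0)=|m_3|^2|m_4|^2=\frac13$ is constant, $V^{(3)}|_\Lambda=\frac13 F|_\Lambda$.

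\textbf{Passing to all of $\Sigma$.} On $\Sigma\cap M_{\text{\rm nondeg}}$ we have $q_3=\pm e_z$. Hence the distinguished plane there is
\[\pi(y)=\text{\rm span}\{(e_z,0),(0,e_z)\}^{\perp},\]
with complement taken in $g_{\text{\rm std}}$. This formula makes sense at every $y\in\Sigma$, including $\Sigma\cap\Delta_\pm$, because $e_z\perp y_1,y_2$ on $\Sigma$. It defines a smooth section $\Sigma\to\mathcal{Z}$. Therefore $\frac13\,F\circ\pi$ is a smooth function on $\Sigma$ that agrees with $V^{(3)}$ on $\Sigma\cap M_{\text{\rm nondeg}}$. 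This is the desired extension.
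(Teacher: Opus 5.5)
Your outline is the paper's proof: the same $u$ (curvature of $g_s$ divided by the $g$-area), the same $\Upsilon$ and $\Phi$, $\Omega=\mathcal{Z}_{\text{\rm nondeg}}$, $u^{(1)}|_{\mathcal{Z}}=0$ from Proposition~\ref{first.order.change.in.sectional.curvature} plus density, and then Theorem~\ref{smooth.extension}. Three of your steps are correct and make explicit what the paper compresses into ``the assertion follows'': the choice $\Lambda=\Sigma\cap M_{\text{\rm nondeg}}$, the interior-minimum argument for $dV^{(2)}=0$, and the passage to $\Sigma$ through the smooth section $y\mapsto T_y\Sigma=\text{\rm span}\{(e_z,0),(0,e_z)\}^\perp$.

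A small aside on the interior-minimum argument: it uses $V^{(2)}\geq 0$, hence $|\lambda_d|\leq\lambda_0$, which is the regime used later anyway. Alternatively, by \eqref{V2aa.value}--\eqref{V2cd.value}, every term of $V^{(2)}$ contains a product of two of $\langle q_3,e_x\rangle,\langle q_3,e_y\rangle,\langle q_1,e_z\rangle,\langle q_2,e_z\rangle$. All of these vanish on $\Lambda$, so $dV^{(2)}=0$ there for all $\lambda_c,\lambda_d$.

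\textbf{The gap.} What is missing is the one substantive hypothesis, $u^{(0)}\geq\sigma\,d(\cdot,\mathcal{Z})^2$ on all of $\mathcal{N}$. Your reduction to ``zero set equal to $\mathcal{Z}$, nondegenerate normal Hessian along $\mathcal{Z}$, compactness'' is valid. But you establish neither the nondegeneracy over $\Delta_\pm$ nor even the positive definiteness of $H=P_2R^{(0)}$ on $\Omega$. Appealing to M\"uter and proposing a second-variation computation at the degenerate fibres is a plan, not a proof; and at those fibres the frame $q_1,q_2,q_3$ and your $z$-coordinates break down.

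\textbf{How the paper fills it.} The paper supplies this bound as Corollary~\ref{lower.bound.for.sectional.curvature.of.Mueter.metrics.2}, and does so without any Hessian. Ziller's Proposition 1.3 gives
\[R_{g_{\text{\rm std}}}(\xi,\eta,\xi,\eta)+\tfrac{1}{4}\,Q([P(\xi_{\mathfrak{m}}),P(\eta_{\mathfrak{m}})],[P(\xi_{\mathfrak{m}}),P(\eta_{\mathfrak{m}})])\leq R_{g_{\text{\rm CM},1}}(C_1^{-1}\xi,C_1^{-1}\eta,C_1^{-1}\xi,C_1^{-1}\eta).\]
Suppose the right side is $<\varepsilon^2$ for $g_{\text{\rm std}}$-orthonormal $\xi,\eta$. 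Then $|\xi_1\wedge\eta_1|^2+|\xi_2\wedge\eta_2|^2\leq C\varepsilon^2$ and $|(p_1\wedge\xi_1+p_2\wedge\xi_2)\wedge(p_1\wedge\eta_1+p_2\wedge\eta_2)|\leq C\varepsilon$. Elementary linear algebra then produces a unit vector $b$ such that $\langle p_i,b\rangle$ and all components of $\xi,\eta$ along $(b,0),(0,b)$ are $O(\varepsilon)$. In other words, $d(\text{\rm span}\{\xi,\eta\},\mathcal{Z})\leq C\varepsilon$. Since $C_1$ maps each plane of $\mathcal{Z}$ to itself, a bound of the same form holds for $\text{\rm span}\{C_1^{-1}\xi,C_1^{-1}\eta\}$. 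This gives $d(\cdot,\mathcal{Z})\leq C\sqrt{u^{(0)}}$, uniformly over $M$ including $\Delta_\pm$. With that corollary replacing your second bullet, your proof is complete.
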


\textbf{Proof.}
We consider the function $u: \mathcal{N} \times [-s_0,s_0] \to \mathbb{R}$ which assigns to a two-plane $\pi = \text{\rm span}\{\xi,\eta\} \in \mathcal{N}$ and a real number $s$ the quantity 
\begin{equation} 
\label{definition.of.u}
\frac{R_{g_s}(\xi,\eta,\xi,\eta)}{|\xi|_g^2 \, |\eta|_g^2 - \langle \xi,\eta \rangle_g^2}. 
\end{equation}
Note that the numerator in (\ref{definition.of.u}) is computed with respect to the metric $g_s$, whereas the denominator in (\ref{definition.of.u}) is computed with respect to the Cheeger-M\"uter metric $g$. It is easy to see that the quantity (\ref{definition.of.u}) depends only on the two-plane $\pi$, but is independent of the choice of the basis $\{\xi,\eta\}$. 

We may write 
\begin{equation} 
\label{Taylor.expansion.of.u}
u(\cdot,s) = u^{(0)} + s \, u^{(1)} + s^2 \, u^{(2)} + s^3 \, u^{(3)} + O(s^4), 
\end{equation}
where $u^{(0)},u^{(1)},u^{(2)},u^{(3)}: \mathcal{N} \to \mathbb{R}$ are smooth functions. Here, $u^{(0)}$ assigns to a two-plane $\pi \in \mathcal{N}$ the sectional curvature of $\pi$ with respect to the Cheeger-M\"uter metric. Clearly, $u^{(0)}|_{\mathcal{Z}} = 0$. Moreover, it follows from Corollary \ref{lower.bound.for.sectional.curvature.of.Mueter.metrics.2} that $u^{(0)} \geq \sigma \, d(\cdot,\mathcal{Z})^2$ at each point in $\mathcal{N}$, where $\sigma$ is a positive constant. Proposition \ref{first.order.change.in.sectional.curvature} implies that $u^{(1)}|_{\mathcal{Z}_{\text{\rm nondeg}}} = 0$. Since $\mathcal{Z}_{\text{\rm nondeg}}$ is dense in $\mathcal{Z}$, it follows that $u^{(1)}|_{\mathcal{Z}} = 0$.

We next define a positive smooth function $\Upsilon: M_{\text{\rm nondeg}} \times \mathbb{R}^4 \to \mathbb{R}$ and a smooth map $\Phi: M_{\text{\rm nondeg}} \times \mathbb{R}^4 \to \mathcal{N}$ as follows. Given a point $y \in M_{\text{\rm nondeg}}$ and a vector $z \in \mathbb{R}^4$, we define 
\begin{equation} 
\label{definition.of.Upsilon}
\Upsilon(y,z) = |m_3(z)|_g^2 \, |m_4(z)|_g^2 - \langle m_3(z),m_4(z) \rangle_g^2 
\end{equation}
and 
\begin{equation} 
\label{definition.of.Phi}
\Phi(y,z) = \text{\rm span}\{m_3(z),m_4(z)\} \subset T_y M 
\end{equation}
for $y \in M_{\text{\rm nondeg}}$ and $z \in \mathbb{R}^4$, where 
\[ m_3(z)=m_3+z^1 m_1+z^3 m_2,\ m_4(z)=m_4+z^4 m_1+z^2 m_2. \] 
Using the identities $|m_3|_g^2 = \frac{1}{3}$, $|m_4|_g^2 = 1$, and $\langle m_3,m_4 \rangle_g = 0$, we obtain 
\[\Upsilon(y,0) = |m_3|_g^2 \, |m_4|_g^2 - \langle m_3,m_4 \rangle_g^2 = \frac{1}{3}\]
for each $y \in M_{\text{\rm nondeg}}$. Moreover, 
\[\Phi(y,0) = \text{\rm span}\{m_3,m_4\} \in \mathcal{Z}_y\] 
for each $y \in M_{\text{\rm nondeg}}$. We next observe that 
\[v(y,z,s) = \Upsilon(y,z) \, u(\Phi(y,z),s)\]
for $y \in M_{\text{\rm nondeg}}$, $z \in \mathbb{R}^4$, and $s \in [-s_0,s_0]$. This implies 
\begin{align*}
v^{(0)}(y,z) &= \Upsilon(y,z) \, u^{(0)}(\Phi(y,z)), \\ 
v^{(1)}(y,z) &= \Upsilon(y,z) \, u^{(1)}(\Phi(y,z)), \\ 
v^{(2)}(y,z) &= \Upsilon(y,z) \, u^{(2)}(\Phi(y,z)), \\ 
v^{(3)}(y,z) &= \Upsilon(y,z) \, u^{(3)}(\Phi(y,z)) 
\end{align*}
for $y \in M_{\text{\rm nondeg}}$ and $z \in \mathbb{R}^4$. 

After these preparations, we apply Theorem \ref{smooth.extension}. After identifying $\mathcal{Z}_{\rm nondeg}$ with $M_{\text{\rm nondeg}}$, we may view $\Upsilon$ as a positive smooth function on $\mathcal{Z}_{\text{\rm nondeg}} \times \mathbb{R}^4$, and we may view $\Phi$ as a smooth map from $\mathcal{Z}_{\text{\rm nondeg}} \times \mathbb{R}^4$ to $\mathcal{N}$. Moreover, we may view $V^{(2)}$ and $V^{(3)}$ as smooth functions on $\mathcal{Z}_{\text{\rm nondeg}}$. If we apply Theorem \ref{smooth.extension} with $\Omega = \mathcal{Z}_{\text{\rm nondeg}}$, the assertion follows. This completes the proof of Proposition \ref{V3.extends.smoothly}. \\

The verification of the following lemma is provided in the companion \textsc{Mathematica} notebook.

\begin{lemma}\label{za.zb.zc.vanish}
Let $z(\cdot)$ be the differential operator defined in Section~\ref{sec:formula.metric.perturbation}. 
Then $z(h^{(1)}_a)=z(h^{(1)}_b)=z(h^{(1)}_c)=0$ along $\Sigma$.
\end{lemma}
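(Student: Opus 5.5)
Since $H = P_2R^{(0)}$ is positive definite at every point of $M_{\text{\rm nondeg}}$ (Lemma \ref{v0}, using $u^{(0)} \geq \sigma\, d(\cdot,\mathcal{Z})^2$), the defining relation $H\, z(h) = -r(h)$ shows that $z(h)=0$ at a point if and only if $r(h) = P_1Lh = 0$ there. The plan is therefore to verify that
\[(Lh)_{1434} = (Lh)_{3234} = (Lh)_{2434} = (Lh)_{3134} = 0\]
along $\Sigma \cap M_{\text{\rm nondeg}}$ for $h \in \{h^{(1)}_a, h^{(1)}_b, h^{(1)}_c\}$. The statement is pointwise on $\Sigma\cap M_{\text{\rm nondeg}}$, which is where $z(\cdot)$ is defined.

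\textbf{Coordinate setup.} On $\Sigma$ we have $\langle p_1,e_z\rangle=\langle p_2,e_z\rangle=0$. Hence $q_1,q_2 \perp e_z$, so $q_3=\pm e_z$, and $q_1,q_2$ span the $xy$-plane. Write $q_1 = (\cos\phi,\sin\phi,0)$ and $q_2 = (-\sin\phi,\cos\phi,0)$, up to the sign convention.

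\textbf{Expressing the tensors in the frame.} Using \eqref{K.omega.in.mi}, \eqref{J.omega.in.mi} and the metric coefficients of $g$, compute the one-forms $K^\flat_\omega$ and $J^\flat_\omega$ in the coframe dual to $m_1,\dots,m_4$, for general $\omega$ and for the position-dependent vectors $\omega=p_1+p_2 = 2\cos\theta\, q_1$ and $\omega=p_2-p_1 = 2\sin\theta\, q_2$. This writes the components of $h^{(1)}_a$, $h^{(1)}_b$, $h^{(1)}_c$ as explicit functions of $(\mathbf{q},\theta)$, built from $\langle q_i,e_x\rangle$, $\langle q_i,e_y\rangle$, $\langle q_i,e_z\rangle$ and trigonometric functions of $\theta$.

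\textbf{Computing $Lh$.} Next compute $Lh$ in the frame $\{m_i\}$, using the Levi-Civita connection of the diagonal metric
\[g=\frac{\sin^2\theta}{2-\cos 2\theta}(\sigma^1)^2+\cdots,\]
whose structure equations are $d\sigma^i = -\tfrac12\sum\varepsilon_{ijk}\sigma^j\wedge\sigma^k$ (up to sign). The derivatives $E_i\langle q_j,\omega\rangle = \sum_k\varepsilon_{ijk}\langle q_k,\omega\rangle$ are all that is needed. Then restrict to $\Sigma$ by setting $\langle q_1,e_z\rangle=\langle q_2,e_z\rangle=0$ and $\langle q_3,e_x\rangle=\langle q_3,e_y\rangle=0$. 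The restriction is made only after all derivatives have been taken, including derivatives transverse to $\Sigma$, which appear through $m_1$ and $m_2$.

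\textbf{Case $h^{(1)}_c$.} Here $K^\flat_{e_z}$ has components proportional to $\langle q_1,e_z\rangle\sin\theta$, $\langle q_2,e_z\rangle\cos\theta$ and $\langle q_3,e_z\rangle$. On $\Sigma$ only the $m_3$-component survives. First derivatives along $m_1$ and $m_2$ produce $\langle q_3,e_z\rangle$ times other factors. The relevant components each contain an odd number of indices from $\{1,2\}$, so a parity/symmetry argument should give the vanishing. Specifically, use the reflection $e_z\mapsto -e_z$ combined with $q_3\mapsto -q_3$, or the isometry swapping $p_1,p_2$. I would try this structural argument first and fall back to direct computation.

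\textbf{Cases $h^{(1)}_a$ and $h^{(1)}_b$.} These involve $\langle q_3,e_x\rangle$ and $\langle q_3,e_y\rangle$, which vanish on $\Sigma$ but have nonzero derivatives along $m_1$ and $m_2$. These derivatives contribute to exactly the mixed components in $P_1$. The coefficients $\tfrac65(2\mp\cos2\theta)$ and $-2(1\mp\cos2\theta)(2\pm\cos2\theta)$ were evidently tuned so that the contributions cancel. Verifying this cancellation is the main obstacle. It requires careful bookkeeping of second covariant derivatives together with the curvature terms $R_{ijkp}h_{ql}$ in $L$. I would organize the computation by the $\theta$-dependence and check the cancellation coefficient by coefficient, as in the companion notebook. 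The case $h^{(1)}_b$ then follows from $h^{(1)}_a$ by the symmetry $\theta\mapsto\tfrac\pi2-\theta$, $p_1\mapsto -p_1$, which swaps the roles of $q_1,q_2$ and of $e_x,e_y$.
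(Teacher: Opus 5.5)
Your reduction is correct: $H=P_2R^{(0)}$ is invertible on $M_{\text{\rm nondeg}}$, so $z(h)=0$ if and only if $r(h)=P_1Lh=0$. For $h^{(1)}_a,h^{(1)}_b$, your fallback of computing $(Lh)_{1434},(Lh)_{3234},(Lh)_{2434},(Lh)_{3134}$ on $\Sigma$ is the paper's own route, since its proof is just the check in the companion \textsc{Mathematica} notebook.

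Where you go wrong is in locating the difficulty. The vanishing for $h^{(1)}_a,h^{(1)}_b$ is not a cancellation produced by tuning $\frac{6}{5}(2\mp\cos(2\theta))$ and $-2(1\mp\cos(2\theta))(2\pm\cos(2\theta))$. It holds for arbitrary coefficient functions of $\theta$, and each summand of $h^{(1)}_a$ separately has $z=0$ on $\Sigma$. The reflection you reserve for $h^{(1)}_c$ in fact settles all three tensors with no computation.

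To make this precise, let $\rho=\mathrm{diag}(1,1,-1)$ and $\hat\rho(p_1,p_2)=(\rho p_1,\rho p_2)$.
\begin{itemize}
\item Since $\rho=-\mathrm{diag}(-1,-1,1)$, $\hat\rho$ is a diagonal rotation composed with the diagonal antipodal map. Both are isometries of $g$; the antipodal map is a $g_{\text{\rm std}}$-isometry commuting with the $SO(3)$-action. Moreover, $\hat\rho$ fixes $\Sigma$ pointwise.
\item From $\rho(a\wedge b)=-\rho a\wedge\rho b$ you get $d\hat\rho(K_\omega)=-K_{\rho\omega}$, $d\hat\rho(J_\omega)=-J_{\rho\omega}$, and $d\hat\rho(K_{p_1+p_2})=-K_{p_1+p_2}$, $d\hat\rho(K_{p_2-p_1})=-K_{p_2-p_1}$.
\item Hence $\hat\rho^*$ reverses the signs of $K^\flat_{p_1+p_2},K^\flat_{p_2-p_1},K^\flat_{e_x},K^\flat_{e_y},J^\flat_{e_x},J^\flat_{e_y}$, and fixes $K^\flat_{e_z}$, $J^\flat_{e_z}$ and $\theta$.
\item Therefore $\hat\rho^*h^{(1)}_a=h^{(1)}_a$, $\hat\rho^*h^{(1)}_b=h^{(1)}_b$, and $\hat\rho^*h^{(1)}_c=h^{(1)}_c$. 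By contrast, $\hat\rho^*h^{(1)}_d=-h^{(1)}_d$, which is exactly why $z(h^{(1)}_d)$ survives and feeds the $\lambda_c\lambda_d^2$ coefficient.
\item At $y\in\Sigma\cap M_{\text{\rm nondeg}}$ we have $q_3=\pm e_z$. So $d\hat\rho$ sends $m_1=(-q_3,q_3)$ and $m_2=(-q_3,-q_3)$ to $-m_1,-m_2$, and fixes $m_3,m_4$, whose components lie in the $xy$-plane.
\end{itemize}
Since $L$ is natural under isometries, $\hat\rho^*Lh=Lh$. Each entry of $P_1Lh$ at $y$ has exactly one index in $\{1,2\}$, so it equals its own negative. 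Thus $r(h)=0$ and $z(h)=-H^{-1}r(h)=0$.

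Compared with the paper's machine check, this gives a hand-verifiable, coefficient-independent explanation. It also accounts for the even/odd split between $h^{(1)}_a+h^{(1)}_b+\lambda_ch^{(1)}_c$ and $h^{(1)}_d$ that the later argument relies on.

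Two smaller corrections:
\begin{itemize}
\item The swap $p_1\leftrightarrow p_2$ moves points of $\Sigma$, so on its own it cannot give pointwise vanishing.
\item The map $p_1\mapsto-p_1$ exchanges $q_1$ and $q_2$ but fixes $e_x$ and $e_y$. Your passage from $h^{(1)}_a$ to $h^{(1)}_b$ therefore also needs a quarter-turn about $e_z$, though this becomes unnecessary once the reflection is used.
\end{itemize}
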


\begin{proposition}
\label{third.order.change.in.sectional.curvature.integral.version}
There is an open dense subset of $\mathbb{R}^2$ with the property that $\int_{\Sigma \cap M_{\text{\rm nondeg}}} V^{(3)} \neq 0$ whenever $(\lambda_c,\lambda_d)$ belongs to this subset. 
\end{proposition}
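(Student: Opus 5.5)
The integral $I(\lambda_c,\lambda_d)=\int_{\Sigma \cap M_{\text{\rm nondeg}}} V^{(3)}$ should be a polynomial in $(\lambda_c,\lambda_d)$. The plan is to show it is a nonzero polynomial; its complement-of-zero-set is then the required open dense subset of $\mathbb{R}^2$.

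\emph{Polynomial structure.} By (\ref{def.h1}) and (\ref{def.h2}), $h^{(1)}$ is affine in $(\lambda_c,\lambda_d)$ and $h^{(2)}$ is quadratic in them. In Lemma \ref{Vk.formula}, $V^{(3)}$ is built from three ingredients:
\begin{itemize}
\item the bilinear form $Q$, evaluated on $h^{(1)}$ and $h^{(2)}$;
\item the trilinear form $C$, evaluated on three copies of $h^{(1)}$;
\item the linear operators $r$, $z$, $P_2L$, together with the cubic form built from $v^{(0)}_{z^\alpha z^\beta z^\gamma}$ applied to $z(h^{(1)})$.
\end{itemize}
Consequently $V^{(3)}$ is pointwise a polynomial of degree at most $3$ in $(\lambda_c,\lambda_d)$. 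Its coefficients are smooth functions on $M_{\text{\rm nondeg}}$.

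\emph{Integrability.} By Proposition \ref{V3.extends.smoothly}, applied for each fixed $(\lambda_c,\lambda_d)$, the restriction to $\Sigma\cap M_{\text{\rm nondeg}}$ extends smoothly to the compact torus $\Sigma$. Hence each coefficient restricted to $\Sigma$ is bounded, and can be recovered from finitely many values of $(\lambda_c,\lambda_d)$ by interpolation. So $I$ is a well-defined polynomial of degree at most $3$. A nonzero polynomial on $\mathbb{R}^2$ has a closed zero set with empty interior, so its complement is open and dense. It therefore suffices to exhibit a single coefficient of $I$ that is nonzero.

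\emph{Simplifications along $\Sigma$.} Lemma \ref{za.zb.zc.vanish} gives $z(h^{(1)}_a)=z(h^{(1)}_b)=z(h^{(1)}_c)=0$ along $\Sigma$, so $z(h^{(1)})=\lambda_d\, z(h^{(1)}_d)$ there. As a result:
\begin{itemize}
\item the cubic $v^{(0)}_{z z z}$ term is $\lambda_d^3$ times a fixed function;
\item the $P_2L$ term is $\lambda_d^2$ times a linear expression in $h^{(1)}$;
\item $r(h^{(2)})\cdot z(h^{(1)})$ and $P_1Q(h^{(1)},h^{(1)})\cdot z(h^{(1)})$ each carry a factor $\lambda_d$.
\end{itemize}
The cleanest target is a coefficient that avoids the complicated terms $h^{(2)}_{aa}$, $h^{(2)}_{bb}$ and $h^{(1)}_a$, $h^{(1)}_b$. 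The first candidate is the coefficient of $\lambda_c^3$. Setting $\lambda_d=0$, it equals
\[
\int_\Sigma \Big(2P_0Q(h^{(1)}_c,h^{(2)}_{cc})+P_0C(h^{(1)}_c,h^{(1)}_c,h^{(1)}_c)\Big).
\]
If this integral vanishes, for instance because $K^\flat_{e_z}$ restricted to $\Sigma$ makes these terms degenerate, I would next try the coefficient of $\lambda_c^2\lambda_d$ or $\lambda_d^3$. On $\Sigma$ the vectors $q_1,q_2$ lie in the $xy$-plane and $q_3=\pm e_z$. This makes many inner products such as $\langle q_3,e_x\rangle$ vanish, so each coefficient reduces to an explicit trigonometric function of $\theta$ and of the angle parametrizing $q_1$. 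Integrating it over the torus should give a number that one checks is nonzero.

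\emph{Main obstacle.} The hard step is the explicit evaluation of $Q$, $C$, $T$ and the third derivatives of $v^{(0)}$ in the frame $m_i$ along $\Sigma$. This needs the Christoffel symbols of $g$ in that frame, which makes the computation long. I would delegate the symbolic computation of one chosen coefficient of $I$ to \textsc{Mathematica}, as the paper does elsewhere. I would also double-check that the chosen coefficient is genuinely nonzero, and not cancelled by the symmetry $\theta\mapsto \frac{\pi}{2}-\theta$ that exchanges the $a$ and $b$ pieces.
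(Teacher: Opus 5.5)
\textbf{There is a genuine gap: you never identify a nonzero coefficient, and every candidate you name is identically zero.} Your overall framework matches the paper's: $\mathcal{P}(\lambda_c,\lambda_d)=\int_{\Sigma\cap M_{\text{\rm nondeg}}}V^{(3)}$ is a polynomial of degree at most $3$, so it suffices to exhibit one nonzero coefficient. The three candidates vanish for symmetry reasons.

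\emph{The $\lambda_c^3$ coefficient.} Let $\rho$ be the diagonal reflection across the $xy$-plane. The metric $g$ is $O(3)$-invariant, and $d\rho\, K_{e_z}=K_{e_z}$, so the metrics $g+s\,h^{(1)}_c+s^2h^{(2)}_{cc}$ are $\rho$-invariant. The fixed set of $\rho$ is $\Sigma$, so $\Sigma$ stays totally geodesic. On $\Sigma$ we have $K_{e_z}=\pm m_3$ with $|K_{e_z}|_g^2=\frac13$, so the induced metric on $\Sigma$ keeps constant coefficients and stays flat. Hence $R(m_3,m_4,m_3,m_4)=0$ on $\Sigma$ for all $s$. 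In particular its $s^3$-coefficient $2P_0Q(h^{(1)}_c,h^{(2)}_{cc})+P_0C(h^{(1)}_c,h^{(1)}_c,h^{(1)}_c)$ vanishes pointwise.

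\emph{The $\lambda_c^2\lambda_d$ and $\lambda_d^3$ coefficients.} The same $\rho$ negates $h^{(1)}_d$, $h^{(2)}_{ad}$, $h^{(2)}_{bd}$, $h^{(2)}_{cd}$ and fixes all other pieces. Therefore $\rho^*g_s^{(\lambda_c,\lambda_d)}=g_s^{(\lambda_c,-\lambda_d)}$. Since $\rho$ fixes $\Sigma$ pointwise, fixes $m_3,m_4$ there and negates $m_1,m_2$, this gives $V^{(3)}_{(\lambda_c,\lambda_d)}=V^{(3)}_{(\lambda_c,-\lambda_d)}$ pointwise on $\Sigma$. So every monomial with an odd power of $\lambda_d$ drops out.

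Similarly, the diagonal rotation by $\pi$ about $e_z$ gives $g_s^{(\lambda_c,\lambda_d)}\cong g_{-s}^{(-\lambda_c,\lambda_d)}$, so $\mathcal{P}$ is odd in $\lambda_c$. Altogether $\mathcal{P}=\lambda_c(\alpha+\beta\lambda_d^2)$. Following your plan literally, you would compute three zeros and have no remaining candidate.

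\emph{The missing step} is the paper's choice of target, the $\lambda_c\lambda_d^2$ coefficient, i.e.\ $\beta$. Using $z(h^{(1)})=\lambda_d\, z(h^{(1)}_d)$ on $\Sigma$, this coefficient is an explicit sum, which the paper evaluates with \textsc{Mathematica}:
\begin{itemize}
\item $2P_0Q(h^{(1)}_c,h^{(2)}_{dd})$ and the three $P_0C$ terms vanish pointwise on $\Sigma$;
\item the terms $4P_0Q(h^{(2)}_{cd},h^{(1)}_d)+2r(h^{(2)}_{cd})\cdot z(h^{(1)}_d)$ integrate to zero;
\item $2P_1Q(h^{(1)}_c,h^{(1)}_d)\cdot z(h^{(1)}_d)+\frac12 z(h^{(1)}_d)^T P_2Lh^{(1)}_c\, z(h^{(1)}_d)$ integrates to $\frac{\pi^2}{18\sqrt{3}}\neq 0$.
\end{itemize}
Without this identification and evaluation, or an evaluation of $\alpha$, your proposal does not show $\mathcal{P}\not\equiv 0$, which is the entire content of the statement.
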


\textbf{Proof.} 
From \eqref{def.h1}, \eqref{def.h2}, and Lemma~\ref{Vk.formula}, $\int_{\Sigma \cap M_{\text{\rm nondeg}}} V^{(3)}$ is a polynomial in $\lambda_c$ and $\lambda_d$, denoted by $\mathcal{P}(\lambda_c,\lambda_d)$.
Therefore, it suffices to show that the coefficient of $\lambda_c\lambda_d^2$ in $\mathcal{P}(\lambda_c,\lambda_d)$ is non-zero. 

From Lemma~\ref{za.zb.zc.vanish}, $z(h^{(1)})=\lambda_d z(h^{(1)}_d)$ along $\Sigma$.
It follows from Lemma~\ref{Vk.formula} that the coefficient function of $\lambda_c\lambda_d^2$ in $V^{(3)}|_{\Sigma}$ is given by
\begin{align*}
&2 P_0Q(h^{(1)}_c,h^{(2)}_{dd}) \\
+&P_0C( h^{(1)}_c, h^{(1)}_d, h^{(1)}_d )+P_0C( h^{(1)}_d, h^{(1)}_c, h^{(1)}_d )+P_0C( h^{(1)}_d, h^{(1)}_d, h^{(1)}_c )\\
+&4 P_0Q(h^{(1)}_d,h^{(2)}_{cd})+2 r(h^{(2)}_{cd})\cdot z(h^{(1)}_d) \\
+&2 P_1Q(h^{(1)}_c, h^{(1)}_d)\cdot z(h^{(1)}_d)+ \frac{1}{2} z(h^{(1)}_d)^T \cdot P_2 Lh^{(1)}_c \cdot z(h^{(1)}_d).
\end{align*}
In the companion \textsc{Mathematica} notebook, we show that
\[2 P_0Q(h^{(1)}_c,h^{(2)}_{dd})=0\] 
and 
\[P_0C( h^{(1)}_c, h^{(1)}_d, h^{(1)}_d )+P_0C( h^{(1)}_d, h^{(1)}_c, h^{(1)}_d )+P_0C( h^{(1)}_d, h^{(1)}_d, h^{(1)}_c)=0\] 
at each point on $\Sigma$. In the companion \textsc{Mathematica} notebook, we further show that
\[\int_{\Sigma \cap M_{\text{\rm nondeg}}} \big( 4 P_0Q( h^{(2)}_{cd}, h^{(1)}_d )+2 r(h^{(2)}_{cd})\cdot z(h^{(1)}_d) \big)=0\] 
and 
\[\int_{\Sigma \cap M_{\text{\rm nondeg}}} \Big ( 2 P_1Q(h^{(1)}_c, h^{(1)}_d)\cdot z(h^{(1)}_d)+\frac{1}{2} z(h^{(1)}_d)^T \cdot P_2 Lh^{(1)}_c \cdot z(h^{(1)}_d) \Big ) =\frac{\pi^2}{18\sqrt{3}}.\] 
Combining the above, the coefficient of $\lambda_c\lambda_d^2$ in $\mathcal{P}(\lambda_c,\lambda_d)$ is $\frac{\pi^2}{18\sqrt{3}}$. 
This completes the proof of Proposition~\ref{third.order.change.in.sectional.curvature.integral.version}.\\

\section{A third order deformation of the Cheeger-M\"uter metric}

\label{third.order.deformation}

From now on, we assume that $\lambda_c$ and $\lambda_d$ are chosen so that the conclusions of Proposition \ref{second.order.change.in.sectional.curvature} and Proposition \ref{third.order.change.in.sectional.curvature.integral.version} hold. It follows from Proposition \ref{V3.extends.smoothly} that $V^{(3)}|_{\Sigma \cap M_{\text{\rm nondeg}}}$ extends to a smooth function on $\Sigma$. Moreover, $\int_{\Sigma \cap M_{\text{\rm nondeg}}} V^{(3)} \neq 0$ by Proposition \ref{third.order.change.in.sectional.curvature.integral.version}. Consequently, we can find a real number $\mu \neq 0$ and a smooth function $\chi: \Sigma \to \mathbb{R}$ such that 
\begin{equation} 
\label{pde.for.chi}
V^{(3)} = \mu + \Delta_\Sigma \chi 
\end{equation}
at each point on $\Sigma \cap M_{\text{\rm nondeg}}$. Here, $\Sigma$ is equipped with the metric induced by the metric $g = \frac{1}{2} \, g_{\text{\rm CM},1}$ on $M$ and 
\[\Delta_\Sigma \chi = 3 \, m_3(m_3(\chi)) + m_4(m_4(\chi))\] 
denotes the Laplacian of the function $\chi: \Sigma \to \mathbb{R}$ with respect to the induced metric on $\Sigma$. We extend $\chi$ to a smooth function $\chi: M \to \mathbb{R}$. We define 
\[h^{(3)} = 6\chi \, g\] 
and 
\[\tilde{g}_s = g_s + s^3 \, h^{(3)} = g + s \, h^{(1)} + s^2 \, h^{(2)} + s^3 \, h^{(3)}.\]
Given a point $y \in M_{\text{\rm nondeg}}$ and a vector $z \in \mathbb{R}^4$, we consider the two-plane $\text{\rm span}\{m_3(z),m_4(z)\} \subset T_y M$, where 
\[ m_3(z)=m_3+z^1 m_1+z^3 m_2,\ m_4(z)=m_4+z^4 m_1+z^2 m_2. \] 
We define a function $\tilde{v}: M_{\text{\rm nondeg}} \times \mathbb{R}^4 \times [-s_0,s_0] \to \mathbb{R}$ by 
\[\tilde{v}(y,z,s) = R_{\tilde{g}_s}(m_3(z),m_4(z),m_3(z),m_4(z))\] 
for $y \in M_{\text{\rm nondeg}}$, $z \in \mathbb{R}^4$, and $s \in [-s_0,s_0]$. We may write 
\[\tilde{v}(y,z,s) = v^{(0)}(y,z) + s \, v^{(1)}(y,z) + s^2 \, v^{(2)}(y,z) + s^3 \, \tilde{v}^{(3)}(y,z) + O(s^4),\] 
where $v^{(0)},v^{(1)},v^{(2)}: M_{\text{\rm nondeg}} \times \mathbb{R}^4 \to \mathbb{R}$ are defined as in (\ref{Taylor.expansion.of.v}) and $\tilde{v}^{(3)}: M_{\text{\rm nondeg}} \times \mathbb{R}^4 \to \mathbb{R}$ is a smooth function. As in Section \ref{framework}, we define 
\begin{align*} 
\tilde{V}^{(3)}(y) 
&= \tilde{v}^{(3)}(y,0) + \sum_{\alpha=1}^4 v^{(2)}_{z^\alpha}(y,0) \, \bar{z}^\alpha(y) + \frac{1}{2} \sum_{\alpha,\beta=1}^4 v^{(1)}_{z^\alpha z^\beta}(y,0) \, \bar{z}^\alpha(y) \, \bar{z}^\beta(y) \\ 
&+ \frac{1}{6} \sum_{\alpha,\beta,\gamma=1}^4 v^{(0)}_{z^\alpha z^\beta z^\gamma}(y,0) \, \bar{z}^\alpha(y) \, \bar{z}^\beta(y) \, \bar{z}^\gamma(y) 
\end{align*}
for $y \in M_{\text{\rm nondeg}}$. Note that $\tilde{V}^{(3)}$ is a smooth function defined on $M_{\text{\rm nondeg}}$. 

\begin{proposition}
\label{third.order.change.in.sectional.curvature.pointwise.version}
We have $\tilde{V}^{(3)} = \mu$ at each point on $\Sigma \cap M_{\text{\rm nondeg}}$.
\end{proposition}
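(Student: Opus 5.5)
Comparing the definitions of $\tilde V^{(3)}$ and $V^{(3)}$ term by term, only the first term differs: $v^{(0)},v^{(1)},v^{(2)}$ and $\bar z$ are unchanged because $h^{(3)}$ enters only at order $s^3$. Hence
\[\tilde V^{(3)}(y) - V^{(3)}(y) = \tilde v^{(3)}(y,0) - v^{(3)}(y,0).\]
The order $s^3$ coefficient of the curvature of $\tilde g_s$ is $R^{(3)} + Lh^{(3)}$, since to third order the new term contributes only linearly, through the linearized curvature operator $L$ at $g$. Therefore $\tilde v^{(3)}(y,0) - v^{(3)}(y,0) = P_0 L h^{(3)} = (Lh^{(3)})(m_3,m_4,m_3,m_4)$. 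Given \eqref{pde.for.chi}, it suffices to show that
\[(Lh^{(3)})(m_3,m_4,m_3,m_4) = -\Delta_\Sigma \chi \quad\text{on } \Sigma\cap M_{\text{\rm nondeg}}.\]

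To compute this, I use the conformal structure of $h^{(3)} = 6\chi g$. The first variation of the sectional curvature numerator under $g\mapsto e^{2f}g$ is standard. Writing $h = 2f g$ with $f = 3\chi$, we get
\[(Lh)(X,Y,X,Y) = 2f\,R(X,Y,X,Y) - |X|^2 D^2f(Y,Y) - |Y|^2 D^2 f(X,X) + 2\langle X,Y\rangle D^2 f(X,Y).\]
Alternatively, one can check this directly from the definition of $L$ by inserting $h=2fg$: the Hessian terms give exactly the displayed combination, and the curvature terms give $2fR$. At a point of $\Sigma\cap M_{\text{\rm nondeg}}$, the plane $\text{span}\{m_3,m_4\}$ has zero curvature, $\langle m_3,m_4\rangle_g=0$, $|m_3|_g^2=\tfrac13$ and $|m_4|_g^2=1$. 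The expression therefore reduces to $-D^2f(m_4,m_4)-\tfrac13 D^2f(m_3,m_3)$.

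Next I convert ambient Hessians into iterated derivatives along $\Sigma$. The key geometric input, already used in the proof of Proposition \ref{first.order.change.in.sectional.curvature}, is that $\Sigma=\Gamma\times\Gamma$ with $\Gamma$ the equator $\{\langle\cdot,e_z\rangle=0\}$. The torus $\Sigma$ is totally geodesic, and $m_3,m_4$ are tangent to it and parallel along it. Hence $D_{m_i}m_i=0$ on $\Sigma$, and $D^2f(m_i,m_i)=m_i(m_i(f))$, which involves only $f|_\Sigma=3\chi|_\Sigma$. Consequently
\[P_0Lh^{(3)} = -3\,m_4(m_4(\chi)) - m_3(m_3(\chi)) = -\Delta_\Sigma\chi,\]
using the stated formula $\Delta_\Sigma\chi = 3m_3m_3\chi+m_4m_4\chi$. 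Combined with \eqref{pde.for.chi}, this gives $\tilde V^{(3)}=V^{(3)}-\Delta_\Sigma\chi=\mu$.

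I expect the main obstacle to be the bookkeeping in the first step. One must confirm that Lemma \ref{curvature.tensor.expansion}, stated for a two-term perturbation, extends to give $R^{(3)}+Lh^{(3)}$ for $\tilde g_s$; this holds because any product involving $h^{(3)}$ is $O(s^4)$. One must also confirm that the normalization $6\chi$, with factor $2f$, matches the sign conventions of $L$. Beyond that, the totally geodesic and parallel property of $m_3,m_4$ along $\Sigma$ is what makes the identity pointwise rather than only integral.
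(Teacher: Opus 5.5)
Your route is the same as the paper's. The paper likewise notes that only $\tilde v^{(3)}(\cdot,0)$ changes, then asserts $\tilde v^{(3)}(\cdot,0)=v^{(3)}(\cdot,0)-\Delta_\Sigma\chi$ on $\Sigma\cap M_{\text{\rm nondeg}}$ from $|m_3|_g^2=\frac13$, $|m_4|_g^2=1$, $\langle m_3,m_4\rangle_g=0$, and concludes with \eqref{pde.for.chi}. Several of your steps are correct and fill in what the paper leaves as ``we compute'':
\begin{itemize}
\item the $s^3$-coefficient of the curvature is $R^{(3)}+Lh^{(3)}$;
\item your general formula for $(L(2fg))(X,Y,X,Y)$ is right;
\item on the totally geodesic flat torus $\Sigma$, where $m_3,m_4$ are parallel, $D^2f(m_i,m_i)=m_i(m_i(f))$ holds.
\end{itemize}

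However, your reduction step is wrong, and the error sits exactly in the normalization the proposition depends on. In your own formula the Hessian in direction $Y$ carries the weight $|X|^2$, and the Hessian in direction $X$ carries $|Y|^2$. With $X=m_3$, $Y=m_4$ this gives
\[(L(2fg))(m_3,m_4,m_3,m_4) = -|m_4|_g^2\,D^2f(m_3,m_3)-|m_3|_g^2\,D^2f(m_4,m_4) = -D^2f(m_3,m_3)-\tfrac13\,D^2f(m_4,m_4).\]
You wrote $-D^2f(m_4,m_4)-\frac13 D^2f(m_3,m_3)$ instead. As a result, your final equality $-3\,m_4(m_4(\chi))-m_3(m_3(\chi))=-\Delta_\Sigma\chi$ is false for the stated $\Delta_\Sigma\chi=3\,m_3(m_3(\chi))+m_4(m_4(\chi))$. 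The two sides differ by $2\,m_3(m_3(\chi))-2\,m_4(m_4(\chi))$.

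With the weights corrected, $f=3\chi$ gives $-3\,m_3(m_3(\chi))-m_4(m_4(\chi))=-\Delta_\Sigma\chi$, as required. Equivalently, $P_0L(2fg)=-\Upsilon(y,0)\,\Delta_\Sigma f=-\frac13\,\Delta_\Sigma f$ on $\Sigma\cap M_{\text{\rm nondeg}}$. This is why the factor $6=2\cdot 3$ appears in $h^{(3)}=6\chi\,g$. Once this step is fixed, the rest of your argument goes through unchanged.
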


\textbf{Proof.}
Note that $\tilde{g}_s = g_s + 6s^3\chi \, g$ by definition of the metric $\tilde{g}_s$. Using the identities $|m_3|_g^2 = \frac{1}{3}$, $|m_4|_g^2 = 1$, and $\langle m_3,m_4 \rangle_g = 0$, we compute 
\[\tilde{v}^{(3)}(\cdot,0) = v^{(3)}(\cdot,0) - \Delta_\Sigma \chi\] 
at each point on $\Sigma \cap M_{\text{\rm nondeg}}$. This implies 
\[\tilde{V}^{(3)} = V^{(3)} - \Delta_\Sigma \chi\] 
at each point on $\Sigma \cap M_{\text{\rm nondeg}}$. The assertion follows now from (\ref{pde.for.chi}). This completes the proof of Proposition \ref{third.order.change.in.sectional.curvature.pointwise.version}. \\

\section{Existence of a real number $s$ with the property that $\tilde{g}_s$ has positive sectional curvature}

In this section, we show that there exists a real number $s$ with the property that the metric $\tilde{g}_s$ has positive sectional curvature. In the following, we assume that $s_0 > 0$ is chosen sufficiently small. 

\begin{proposition}
\label{positivity.of.sectional.curvature}
If $\mu > 0$, then $\tilde{g}_s$ has positive sectional curvature if $s \in (0,s_0]$ is sufficiently small. If $\mu < 0$, then $\tilde{g}_s$ has positive sectional curvature if $-s \in (0,s_0]$ is sufficiently small. 
\end{proposition}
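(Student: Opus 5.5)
We apply Theorem \ref{positivity.manifold.version} to the function $u: \mathcal{N} \times [-s_0,s_0] \to \mathbb{R}$ defined as in (\ref{definition.of.u}), with $g_s$ replaced by $\tilde{g}_s$. It suffices to treat the case $\mu > 0$. If $\mu < 0$, we replace $s$ by $-s$. This flips the signs of $u^{(1)}$ and $u^{(3)}$ and leaves $u^{(0)}$ and $u^{(2)}$ unchanged. The hypotheses involving $u^{(0)}$, $u^{(1)}$, $V^{(2)}$ and the vanishing set are therefore unaffected, while $\tilde{V}^{(3)}$ changes to $-\tilde{V}^{(3)} = -\mu > 0$ on $\Sigma$. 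To check this sign claim, note that $\bar z$ changes sign, so each term in the definition of $\tilde V^{(3)}$ flips sign.

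Since the denominator in (\ref{definition.of.u}) is positive, positivity of the sectional curvature of $\tilde{g}_s$ is equivalent to $\inf_{\mathcal{N}} u(\cdot,s) > 0$. The expansion of $u$ has the same $u^{(0)},u^{(1)},u^{(2)}$ as in the proof of Proposition \ref{V3.extends.smoothly}, because $h^{(3)}$ enters only at order $s^3$. Hence the structural hypotheses are already established there. Namely, $u^{(0)}|_{\mathcal{Z}} = 0$, $u^{(0)} \geq \sigma \, d(\cdot,\mathcal{Z})^2$ (from the Cheeger--M\"uter lower bound), and $u^{(1)}|_{\mathcal{Z}} = 0$ (from Proposition \ref{first.order.change.in.sectional.curvature} and density).

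We take $\Omega = \mathcal{Z}_{\text{nondeg}}$, identified with $M_{\text{nondeg}}$, and use $\Upsilon$ and $\Phi$ from (\ref{definition.of.Upsilon}) and (\ref{definition.of.Phi}). Then $\Upsilon(y,0) = \frac13$, and $D\Phi_{(y,0)}$ is invertible: the $z$-directions move the plane $\mathrm{span}\{m_3,m_4\}$ through all of $\mathrm{Hom}(\pi,\pi^\perp)$, since $m_1,m_2$ span a complement. The functions $v^{(k)}$ built from $u$ coincide with the $v^{(k)}$, $\tilde v^{(3)}$ from before. Consequently the quantities in Section \ref{framework} are $V^{(2)}$ and $\tilde{V}^{(3)}$.

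To construct $\rho$, we need a nonnegative continuous function on $\mathcal{Z}$ whose restriction to $\Omega$ is comparable to $1 - \langle y_1\wedge y_2, e_z\rangle^2/|y_1\wedge y_2|^2$. On $\mathcal{Z}$, a plane in $\mathcal{Z}_p$ determines the unit vector $b$ up to sign, and on $\mathcal{Z}_{\text{nondeg}}$ we have $b = \pm q_3$. So we set $\rho(\pi) = 1 - \langle b, e_z\rangle^2$. This is continuous on all of $\mathcal{Z}$, including over $\Delta_\pm$, where the choice of $b$ is an extra degree of freedom.

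We then verify the two conditions with $\kappa = \min\{3\delta, 3\mu\}$. Proposition \ref{second.order.change.in.sectional.curvature} gives $V^{(2)} \ge \delta \rho = 3\delta\,\Upsilon(y,0)\,\rho$. On $\Omega \cap \{\rho = 0\}$ we have $q_3 = \pm e_z$, so $y \in \Sigma \cap M_{\text{nondeg}}$. There $V^{(2)} = 0$ by Proposition \ref{second.order.change.in.sectional.curvature}, and $\tilde V^{(3)} = \mu = 3\mu\,\Upsilon(y,0)$ by Proposition \ref{third.order.change.in.sectional.curvature.pointwise.version}.

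The main obstacle is the density hypotheses. Density of $\Omega$ in $\mathcal{Z}$ was already noted. We also need $\Omega\cap\{\rho=0\}$ to be dense in $\{\rho=0\}$. The set $\{\rho = 0\}$ consists of planes at points $p$ with $p_1,p_2 \perp e_z$, $b = \pm e_z$, which is exactly the lift of $\Sigma$ (including points of $\Delta_\pm\cap\Sigma$). Since $\Sigma \cap M_{\text{nondeg}}$ is dense in the torus $\Sigma$ (its complement consists of two circles), and the lift is continuous, the density follows. Theorem \ref{positivity.manifold.version} then gives $\inf_{\mathcal N} u(\cdot,s) > 0$ for small $s\in(0,s_0]$, which is the claim.
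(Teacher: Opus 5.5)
Your proposal is correct and follows the paper's proof essentially step for step. You apply Theorem \ref{positivity.manifold.version} to the curvature quotient built from $\tilde{g}_s$, with $\Omega = \mathcal{Z}_{\text{\rm nondeg}}$, the same $\Upsilon$ and $\Phi$, and $\rho = 1 - \langle b,e_z\rangle^2$, and you check density of $\Omega \cap \{\rho=0\}$ through the tangent planes of $\Sigma$. Your reduction of the case $\mu<0$ via $s \mapsto -s$ (so $\bar{z} \mapsto -\bar{z}$, $V^{(2)}$ is unchanged, and $\tilde{V}^{(3)} \mapsto -\tilde{V}^{(3)}$) correctly makes precise what the paper leaves as ``analogous arguments.''
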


\textbf{Proof.} 
We only consider the case $\mu > 0$. The case $\mu < 0$ follows from analogous arguments. We consider the function $\tilde{u}: \mathcal{N} \times [-s_0,s_0] \to \mathbb{R}$ which assigns to a two-plane $\pi = \text{\rm span}\{\xi,\eta\} \in \mathcal{N}$ and a real number $s$ the quantity 
\begin{equation} 
\label{definition.of.tilde.u}
\frac{R_{\tilde{g}_s}(\xi,\eta,\xi,\eta)}{|\xi|_g^2 \, |\eta|_g^2 - \langle \xi,\eta \rangle_g^2}. 
\end{equation}
Note that the numerator in (\ref{definition.of.tilde.u}) is computed with respect to the metric $\tilde{g}_s$, whereas the denominator in (\ref{definition.of.tilde.u}) is computed with respect to the Cheeger-M\"uter metric $g$. It is easy to see that the quantity (\ref{definition.of.tilde.u}) depends only on the two-plane $\pi$, but is independent of the choice of the basis $\{\xi,\eta\}$. 

We may write 
\[\tilde{u}(\cdot,s) = u^{(0)} + s \, u^{(1)} + s^2 \, u^{(2)} + s^3 \, \tilde{u}^{(3)} + O(s^4),\]
where $u^{(0)},u^{(1)},u^{(2)}: \mathcal{N} \to \mathbb{R}$ are defined as in (\ref{Taylor.expansion.of.u}) and $\tilde{u}^{(3)}: \mathcal{N} \to \mathbb{R}$ is a smooth function. In particular, $u^{(0)}|_{\mathcal{Z}} = 0$ and $u^{(0)} \geq \sigma \, d(\cdot,\mathcal{Z})^2$ at each point in $\mathcal{N}$, where $\sigma$ is a positive constant. Moreover, $u^{(1)}|_{\mathcal{Z}} = 0$.

We define a positive smooth function $\Upsilon: M_{\text{\rm nondeg}} \times \mathbb{R}^4 \to \mathbb{R}$ by (\ref{definition.of.Upsilon}), and we define a smooth map $\Phi: M_{\text{\rm nondeg}} \times \mathbb{R}^4 \to \mathcal{N}$ by (\ref{definition.of.Phi}). Recall that 
\[\Upsilon(y,0) = |m_3|_g^2 \, |m_4|_g^2 - \langle m_3,m_4 \rangle_g^2 = \frac{1}{3}\] 
and 
\[\Phi(y,0) = \text{\rm span}\{m_3,m_4\} \in \mathcal{Z}_y\] 
for each $y \in M_{\text{\rm nondeg}}$. As in Section \ref{third.order.deformation}, we define a smooth function $\tilde{v}: M_{\text{\rm nondeg}} \times \mathbb{R}^4 \times [-s_0,s_0] \to \mathbb{R}$ by 
\[\tilde{v}(y,z,s) = R_{\tilde{g}_s}(m_3(z),m_4(z),m_3(z),m_4(z))\] 
for $y \in M_{\text{\rm nondeg}}$, $z \in \mathbb{R}^4$, and $s \in [-s_0,s_0]$. Clearly, 
\[\tilde{v}(y,z,s) = \Upsilon(y,z) \, \tilde{u}(\Phi(y,z),s)\]
for $y \in M_{\text{\rm nondeg}}$, $z \in \mathbb{R}^4$, and $s \in [-s_0,s_0]$. As in Section \ref{third.order.deformation}, we write 
\[\tilde{v}(y,z,s) = v^{(0)}(y,z) + s \, v^{(1)}(y,z) + s^2 \, v^{(2)}(y,z) + s^3 \, \tilde{v}^{(3)}(y,z) + O(s^4),\] 
where $v^{(0)},v^{(1)},v^{(2)},\tilde{v}^{(3)}: M_{\text{\rm nondeg}} \times \mathbb{R}^4 \to \mathbb{R}$ are smooth functions. With this understood,  
\[\tilde{v}^{(3)}(y,z) = \Upsilon(y,z) \, \tilde{u}^{(3)}(\Phi(y,z))\] 
for $y \in M_{\text{\rm nondeg}}$ and $z \in \mathbb{R}^4$.

Since $\mu > 0$, it follows from Proposition \ref{second.order.change.in.sectional.curvature} and Proposition \ref{third.order.change.in.sectional.curvature.pointwise.version} that we can find a small positive constant $\kappa$ with the following properties:
\begin{itemize} 
\item $V^{(2)}(y) \geq \kappa \, \Upsilon(y,0) \, \big ( 1 - \frac{\langle y_1 \wedge y_2,e_z \rangle^2}{|y_1 \wedge y_2|^2} \big )$ for each point $y = (y_1,y_2) \in M_{\text{\rm nondeg}}$. 
\item $V^{(2)}(y) = 0$ and $\tilde{V}^{(3)}(y) \geq \kappa \, \Upsilon(y,0)$ for each point $y \in \Sigma \cap M_{\text{\rm nondeg}}$. 
\end{itemize}

In the next step, we define a function $\rho: \mathcal{Z} \to \mathbb{R}$ as follows. Suppose that $\pi \subset T_{(p_1,p_2)} M$. For each two-plane $\pi \in \mathcal{Z}$, we can find a unit vector $b \in S^2$ such that $\langle p_1,b \rangle = \langle p_2,b \rangle = 0$ and $\pi = \text{\rm span}\{(b,0),(0,b)\}^\perp$, where the orthogonal complement is taken with respect to $g_{\text{\rm std}}$. Moreover, for a given two-plane $\pi \in \mathcal{Z}$, the vector $b \in S^2$ is uniquely determined up to sign. We define the value of the function $\rho: \mathcal{Z} \to \mathbb{R}$ at the point $\pi \in \mathcal{Z}$ to be $1-\langle b,e_z \rangle^2$. Clearly, $\rho$ is a nonnegative continuous function on $\mathcal{Z}$. The set $\{\rho=0\} \subset \mathcal{Z}$ consists of all two-planes $\pi \subset T_{(p_1,p_2)} M$ such that $\langle p_1,e_z \rangle = \langle p_2,e_z \rangle = 0$ and $\pi = \text{\rm span}\{(e_z,0),(0,e_z)\}^\perp$, where the orthogonal complement is taken with respect to $g_{\text{\rm std}}$. In other words, the set $\{\rho=0\} \subset \mathcal{Z}$ consists of all the tangent planes to the 2D torus $\Sigma$. It is easy to see that the set $\mathcal{Z}_{\text{\rm nondeg}} \cap \{\rho=0\}$ is dense in $\{\rho=0\}$.

After these preparations, we apply Theorem \ref{positivity.manifold.version}. After identifying $\mathcal{Z}_{\rm nondeg}$ with $M_{\text{\rm nondeg}}$, we may view $\Upsilon$ as a positive smooth function on $\mathcal{Z}_{\text{\rm nondeg}} \times \mathbb{R}^4$, and we may view $\Phi$ as a smooth map from $\mathcal{Z}_{\text{\rm nondeg}} \times \mathbb{R}^4$ to $\mathcal{N}$. Moreover, we may view $V^{(2)}$ and $\tilde{V}^{(3)}$ as smooth functions on $\mathcal{Z}_{\text{\rm nondeg}}$. Applying Theorem \ref{positivity.manifold.version} with $\Omega = \mathcal{Z}_{\text{\rm nondeg}}$, we conclude that $\inf_{\mathcal{N}} \tilde{u}(\cdot,s) > 0$ if $s \in (0,s_0]$ is sufficiently small. Thus, $\tilde{g}_s$ has positive sectional curvature if $s \in (0,s_0]$ is sufficiently small. This completes the proof of Proposition \ref{positivity.of.sectional.curvature}. \\

\appendix

\section{A formula for the Riemann curvature tensor of a perturbed metric}

\label{curvature.of.perturbed.metric}

In this section, we derive a formula for the Riemann curvature tensor of a perturbed metric.

\begin{proposition}
\label{curvature.tensor.of.perturbed.metric}
Let $M$ be a manifold, and let $\bar{g}$ and $g$ be two Riemannian metrics on $M$. Let $\bar{D}$ denote the Levi-Civita connection associated with $\bar{g}$, and let $\bar{R}$ denote the Riemann curvature tensor of $\bar{g}$. Then the Riemann curvature tensor of $g$ is given by 
\begin{align*} 
R_{ijkl} 
&= \frac{1}{2} \, \bar{g}^{pq} \, \bar{R}_{ijkp} \, g_{ql} + \frac{1}{2} \, \bar{g}^{pq} \, \bar{R}_{ijpl} \, g_{kq} \\ 
&+ \frac{1}{2} \, \big [ -(\bar{D}_{i,k}^2 g)_{jl} + (\bar{D}_{i,l}^2 g)_{jk} + (\bar{D}_{j,k}^2 g)_{il} - (\bar{D}_{j,l}^2 g)_{ik} \big ] \\ 
&+ \frac{1}{4} \, g^{pq} \, [(\bar{D}_j g)_{kp} + (\bar{D}_k g)_{jp} - (\bar{D}_p g)_{jk}] \, [(\bar{D}_i g)_{lq} + (\bar{D}_l g)_{iq} - (\bar{D}_q g)_{il}] \\ 
&- \frac{1}{4} \, g^{pq} \, [(\bar{D}_i g)_{kp} + (\bar{D}_k g)_{ip} - (\bar{D}_p g)_{ik}] \, [(\bar{D}_j g)_{lq} + (\bar{D}_l g)_{jq} - (\bar{D}_q g)_{jl}]. 
\end{align*} 
\end{proposition}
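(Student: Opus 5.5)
The plan is to derive the formula from the difference tensor between the two Levi-Civita connections. Write $D = \bar{D} + A$, where $A^k_{ij}$ is a symmetric $(1,2)$-tensor. Koszul's formula, applied in $\bar{D}$-normal coordinates (or tensorially), gives
\[
g_{kq} A^k_{ij} = \frac{1}{2} \big[ (\bar{D}_i g)_{jq} + (\bar{D}_j g)_{iq} - (\bar{D}_q g)_{ij} \big] =: \Gamma_{ijq}.
\]
Hence $A^p_{ij} = g^{pq} \Gamma_{ijq}$.

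Next I will use the standard identity relating the curvatures of two connections differing by $A$:
\[
R(X,Y)Z = \bar{R}(X,Y)Z + (\bar{D}_X A)(Y,Z) - (\bar{D}_Y A)(X,Z) + A(X,A(Y,Z)) - A(Y,A(X,Z)).
\]
I lower the last index with $g$, keeping track of the paper's sign convention. Judging from the formula for $Lh$ and from $R_{ijkl}$ being paired as $(i,j)$, $(k,l)$, the convention is $R_{ijkl} = g(R(e_i,e_j)e_l, e_k)$ up to sign. I will fix the convention by matching the linear term $Lh$ in Lemma \ref{curvature.tensor.expansion}.

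The curvature term is $g_{kq}\bar{R}^q{}_{\dots}$, which in $\bar{g}$-lowered form reads $\bar{g}^{pq}\bar{R}_{ij?p}\, g_{q?}$. Since $\bar{R}$ is antisymmetric only with respect to $\bar{g}$ and not $g$, I will write it as the antisymmetrized average $\frac{1}{2}\bar{g}^{pq}\bar{R}_{ijkp}g_{ql} + \frac{1}{2}\bar{g}^{pq}\bar{R}_{ijpl}g_{kq}$. This is legitimate because $R_{ijkl}$ is antisymmetric in $k,l$: I can take the antisymmetrization of the whole expression in $(k,l)$ and then treat each piece separately.

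For the derivative terms, I expand
\[
g_{lp}\,\bar{D}_i A^p_{jk} = \bar{D}_i \Gamma_{jkl} - (\bar{D}_i g)_{lp} A^p_{jk}.
\]
Here $\bar{D}_i\Gamma_{jkl}$ produces second derivatives of $g$. After antisymmetrizing in $i,j$, the terms $\bar{D}_i\bar{D}_j g - \bar{D}_j\bar{D}_i g$ become Ricci-identity curvature terms. These must be absorbed into the $\bar{R}$ contributions above, which is exactly how the symmetric "half–half" split arises. What remains are the four Hessian terms $\frac{1}{2}[-\bar{D}^2_{i,k}g_{jl} + \dots]$.

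The quadratic terms are of two kinds. The first comes from $-(\bar{D}_i g)_{lp}A^p_{jk}$. The second comes from the $g$-lowered products $g^{pq}\Gamma\Gamma$. Using $(\bar{D}_i g)_{lp} = \Gamma_{ilp} + \Gamma_{ipl}$, I expect these to combine into exactly $\frac{1}{4}g^{pq}[\dots][\dots] - \frac{1}{4}g^{pq}[\dots][\dots]$ as stated, with a net sign flip relative to the naive $AA$ term.

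The main obstacle is bookkeeping: getting the index placement and the sign convention consistent, and checking that the Ricci-identity commutator terms reproduce the $\bar{R}$ terms with coefficient $\frac{1}{2}$ each and no leftover. I will first verify the linearization in the case $g = \bar{g} + sh$ against $Lh$, which pins down the conventions. As a sanity check, I will then confirm that setting $g = \bar{g}$ returns $\bar{R}$.
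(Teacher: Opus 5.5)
Your plan is correct and is essentially the paper's proof: you write $D=\bar D+A$ with $A$ given by the Koszul-type formula, differentiate that formula to express $\bar D A$ through Hessians of $g$ and $(\bar D g)(A,\cdot)$, use the Ricci identity on the commutator $\bar D^2_{i,j}g-\bar D^2_{j,i}g$ to turn the full $\bar R$ term into the half--half form, and use $(\bar D_i g)_{lp}=\Gamma_{ilp}+\Gamma_{ipl}$ to convert $(\bar D g)(A,\cdot)$ minus the $AA$ terms into $g(A(i,l),A(j,k))-g(A(j,l),A(i,k))$. One caution: your two explanations of the half--half split are alternatives, not complementary steps. Antisymmetrizing everything in $(k,l)$ kills the commutator term outright, since it is symmetric in $k,l$ while the Hessian and quadratic terms are already antisymmetric; without antisymmetrizing, the commutator is instead absorbed via the Ricci identity. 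Doing both would double count.
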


\textbf{Proof.} 
Let $D$ denote the Levi-Civita connection with respect to the metric $g$. We may write 
\begin{equation} 
\label{connection}
D_X Y = \bar{D}_X Y + A(X,Y), 
\end{equation}
where the tensor $A$ is given by 
\begin{equation} 
\label{definition.of.A} 
2 \, g(A(X,Y),Z) = (\bar{D}_X g)(Y,Z) + (\bar{D}_Y g)(X,Z) - (\bar{D}_Z g)(X,Y). 
\end{equation}
The identity (\ref{definition.of.A}) implies that $A(X,Y) = A(Y,X)$ and 
\begin{equation} 
\label{covariant.derivative.of.g}
(\bar{D}_X g)(Y,Z) = g(A(X,Y),Z) + g(Y,A(X,Z)). 
\end{equation}
Using (\ref{connection}), we obtain 
\begin{equation} 
\label{term.1}
D_X D_Y Z = \bar{D}_X \bar{D}_Y Z + \bar{D}_X(A(Y,Z)) + A(X,\bar{D}_Y Z) + A(X,A(Y,Z)) 
\end{equation}
and 
\begin{equation} 
\label{term.2}
D_{D_X Y} Z = \bar{D}_{\bar{D}_X Y} Z + \bar{D}_{A(X,Y)} Z + A(\bar{D}_X Y,Z) + A(A(X,Y),Z). 
\end{equation}
In the next step, we subtract (\ref{term.2}) from (\ref{term.1}). Using the identity 
\[\bar{D}_X(A(Y,Z)) = (\bar{D}_X A)(Y,Z) + A(\bar{D}_X Y,Z) + A(Y,\bar{D}_X Z),\] 
we deduce that 
\begin{align} 
\label{second.covariant.derivative.X.Y}
D_{X,Y}^2 Z 
&= \bar{D}_{X,Y}^2 Z + (\bar{D}_X A)(Y,Z) + A(X,A(Y,Z)) - A(A(X,Y),Z) \notag \\ 
&+ A(X,\bar{D}_Y Z) + A(Y,\bar{D}_X Z) - \bar{D}_{A(X,Y)} Z. 
\end{align}
Interchanging $X$ and $Y$ gives 
\begin{align} 
\label{second.covariant.derivative.Y.X}
D_{Y,X}^2 Z 
&= \bar{D}_{Y,X}^2 Z + (\bar{D}_Y A)(X,Z) + A(Y,A(X,Z)) - A(A(Y,X),Z) \notag \\ 
&+ A(Y,\bar{D}_X Z) + A(X,\bar{D}_Y Z) - \bar{D}_{A(Y,X)} Z. 
\end{align}
Subtracting (\ref{second.covariant.derivative.X.Y}) from (\ref{second.covariant.derivative.Y.X}) yields 
\begin{align*} 
-D_{X,Y}^2 Z + D_{Y,X}^2 Z 
&= -\bar{D}_{X,Y}^2 Z + \bar{D}_{Y,X}^2 Z \\ 
&- (\bar{D}_X A)(Y,Z) + (\bar{D}_Y A)(X,Z) \\ 
&- A(X,A(Y,Z)) + A(Y,A(X,Z)). 
\end{align*}
Using the identities 
\[-D_{X,Y}^2 Z + D_{Y,X}^2 Z = g^{pq} \, R(X,Y,Z,\partial_p) \, \partial_q\] 
and 
\[-\bar{D}_{X,Y}^2 Z + \bar{D}_{Y,X}^2 Z = \bar{g}^{pq} \, \bar{R}(X,Y,Z,\partial_p) \, \partial_q,\] 
we conclude that 
\begin{align*} 
R(X,Y,Z,W) 
&= \bar{g}^{pq} \, \bar{R}(X,Y,Z,\partial_p) \, g(W,\partial_q) \\ 
&- g \big ( (\bar{D}_X A)(Y,Z),W \big ) + g \big ( (\bar{D}_Y A)(X,Z),W \big ) \\ 
&- g \big ( A(X,A(Y,Z)),W \big ) + g \big ( A(Y,A(X,Z)),W \big ). 
\end{align*}
Differentiating the identity (\ref{definition.of.A}), we obtain 
\begin{align} 
\label{derivative.of.A.X.Y}
&2 \, g((\bar{D}_X A)(Y,Z),W) + 2 \, (\bar{D}_X g)(A(Y,Z),W) \notag \\ 
&= (\bar{D}_{X,Y}^2 g)(Z,W) + (\bar{D}_{X,Z}^2 g)(Y,W) - (\bar{D}_{X,W}^2 g)(Y,Z). 
\end{align}
Interchanging $X$ and $Y$ gives
\begin{align}
\label{derivative.of.A.Y.X}
&2 \, g((\bar{D}_Y A)(X,Z),W) + 2 \, (\bar{D}_Y g)(A(X,Z),W) \notag \\ 
&= (\bar{D}_{Y,X}^2 g)(Z,W) + (\bar{D}_{Y,Z}^2 g)(X,W) - (\bar{D}_{Y,W}^2 g)(X,Z). 
\end{align}
Subtracting (\ref{derivative.of.A.X.Y}) from (\ref{derivative.of.A.Y.X}) yields 
\begin{align*} 
&-2 \, g \big ( (\bar{D}_X A)(Y,Z),W \big ) + 2 \, g \big ( (\bar{D}_Y A)(X,Z),W \big ) \\ 
&- 2 \, (\bar{D}_X g)(A(Y,Z),W) + 2 \, (\bar{D}_Y g)(A(X,Z),W) \\ 
&= -(\bar{D}_{X,Y}^2 g)(Z,W) - (\bar{D}_{X,Z}^2 g)(Y,W) + (\bar{D}_{X,W}^2 g)(Y,Z) \\ 
&+ (\bar{D}_{Y,X}^2 g)(Z,W) + (\bar{D}_{Y,Z}^2 g)(X,W) - (\bar{D}_{Y,W}^2 g)(X,Z) \\ 
&= -\bar{g}^{pq} \, \bar{R}(X,Y,Z,\partial_p) \, g(W,\partial_q) - \bar{g}^{pq} \, \bar{R}(X,Y,W,\partial_p) \, g(Z,\partial_q) \\ 
&- (\bar{D}_{X,Z}^2 g)(Y,W) + (\bar{D}_{X,W}^2 g)(Y,Z) \\ 
&+ (\bar{D}_{Y,Z}^2 g)(X,W) - (\bar{D}_{Y,W}^2 g)(X,Z). 
\end{align*}
Putting these facts together, we obtain 
\begin{align*} 
&R(X,Y,Z,W) \\ 
&= \frac{1}{2} \, \bar{g}^{pq} \, \bar{R}(X,Y,Z,\partial_p) \, g(W,\partial_q) - \frac{1}{2} \, \bar{g}^{pq} \, \bar{R}(X,Y,W,\partial_p) \, g(Z,\partial_q) \\ 
&- \frac{1}{2} \, (\bar{D}_{X,Z}^2 g)(Y,W) + \frac{1}{2} \, (\bar{D}_{X,W}^2 g)(Y,Z) \\ 
&+ \frac{1}{2} \, (\bar{D}_{Y,Z}^2 g)(X,W) - \frac{1}{2} \, (\bar{D}_{Y,W}^2 g)(X,Z) \\ 
&+ (\bar{D}_X g)(A(Y,Z),W) - (\bar{D}_Y g)(A(X,Z),W) \\ 
&- g \big ( A(X,A(Y,Z)),W \big ) + g \big ( A(Y,A(X,Z)),W \big ). 
\end{align*}
Finally, the identity (\ref{covariant.derivative.of.g}) implies 
\[(\bar{D}_X g)(A(Y,Z),W) = g \big ( A(X,A(Y,Z)),W \big ) + g \big ( A(Y,Z),A(X,W) \big )\] 
and 
\[(\bar{D}_Y g)(A(X,Z),W) = g \big ( A(Y,A(X,Z)),W \big ) + g \big ( A(X,Z),A(Y,W) \big ).\] 
Thus, we conclude that 
\begin{align*} 
&R(X,Y,Z,W) \\ 
&= \frac{1}{2} \, \bar{g}^{pq} \, \bar{R}(X,Y,Z,\partial_p) \, g(W,\partial_q) - \frac{1}{2} \, \bar{g}^{pq} \, \bar{R}(X,Y,W,\partial_p) \, g(Z,\partial_q) \\ 
&- \frac{1}{2} \, (\bar{D}_{X,Z}^2 g)(Y,W) + \frac{1}{2} \, (\bar{D}_{X,W}^2 g)(Y,Z) \\ 
&+ \frac{1}{2} \, (\bar{D}_{Y,Z}^2 g)(X,W) - \frac{1}{2} \, (\bar{D}_{Y,W}^2 g)(X,Z) \\ 
&+ g \big ( A(Y,Z),A(X,W) \big ) - g \big ( A(X,Z),A(Y,W) \big ). 
\end{align*} 
This completes the proof of Proposition \ref{curvature.tensor.of.perturbed.metric}. \\

\section{A lower bound for the sectional curvature of the Cheeger-M\"uter metric}

Throughout this section, we follow the setup in Ziller's paper \cite{Ziller}. Let $M = S^2 \times S^2$, and let $g_{\text{\rm std}}$ denote the standard metric on $S^2 \times S^2$. For each $t > 0$, we denote by $g_{\text{\rm CM},t}$ the Cheeger-M\"uter metric with parameter $t$. We denote by $R_{g_{\text{\rm CM},t}}$ the Riemann curvature tensor of $g_{\text{\rm CM},t}$. As in \cite{Ziller}, we write $g_{\text{\rm CM},t}(\xi,\eta) = g_{\text{\rm std}}(C_t(\xi),\eta)$ for all vector fields $\xi$ and $\eta$, where $C_t$ is a $(1,1)$-tensor. Proposition 1.3 in \cite{Ziller} gives  
\begin{align*} 
&R_{g_{\text{\rm std}}}(\xi,\eta,\xi,\eta) + \frac{t^3}{4} \, Q([P(\xi_{\mathfrak{m}}),P(\eta_{\mathfrak{m}})],[P(\xi_{\mathfrak{m}}),P(\eta_{\mathfrak{m}})]) \\ 
&\leq R_{g_{\text{\rm CM},t}}(C_t^{-1}(\xi),C_t^{-1}(\eta),C_t^{-1}(\xi),C_t^{-1}(\eta))
\end{align*}
for all vector fields $\xi$ and $\eta$. 

In the following, we focus on the Cheeger-M\"uter metric with parameter $t=1$.

Given a point $p = (p_1,p_2) \in M$, we denote by $\mathcal{Z}_p$ the set of all two-planes $\pi \subset T_p M$ with the property that there exists a unit vector $b \in S^2$ such that $\langle p_1,b \rangle = \langle p_2,b \rangle = 0$ and $\pi = \text{\rm span}\{(b,0),(0,b)\}^\perp$, where the orthogonal complement is taken with respect to $g_{\text{\rm std}}$. If we define $\mathcal{Z} = \bigcup_{p \in M} \mathcal{Z}_p$, then $\mathcal{Z}$ is a smooth manifold of dimension $4$. Note that $\mathcal{Z}$ is the set of all two-planes that have zero sectional curvature with respect to the Cheeger-M\"uter metric $g_{\text{\rm CM},1}$.

\begin{proposition} 
\label{lower.bound.for.sectional.curvature.of.Mueter.metrics}
Let $p = (p_1,p_2)$ be a point in $M$. Suppose that $\xi,\eta \in T_p M$ are $g_{\text{\rm std}}$-orthonormal vectors satisfying 
\[R_{g_{\text{\rm CM},1}}(C_1^{-1}(\xi),C_1^{-1}(\eta),C_1^{-1}(\xi),C_1^{-1}(\eta)) < \varepsilon^2.\] 
Then there exists a unit vector $b \in S^2$ such that $|\langle p_1,b \rangle| \leq C\varepsilon$, $|\langle p_2,b \rangle| \leq C\varepsilon$, $|\langle \xi,(b,0) \rangle_{g_{\text{\rm std}}}| \leq C\varepsilon$, $|\langle \eta,(b,0) \rangle_{g_{\text{\rm std}}}| \leq C\varepsilon$, $|\langle \xi,(0,b) \rangle_{g_{\text{\rm std}}}| \leq C\varepsilon$,  $|\langle \eta,(0,b) \rangle_{g_{\text{\rm std}}}| \leq C\varepsilon$. In other words, the two-plane $\text{\rm span}\{\xi,\eta\}$ has distance at most $C\varepsilon$ from the set $\mathcal{Z}$.
\end{proposition}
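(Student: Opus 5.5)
The plan is to combine the lower bound from Proposition 1.3 of \cite{Ziller}, quoted above with $t=1$, with a direct linear-algebra analysis of the two nonnegative terms on its left-hand side. By hypothesis,
\[R_{g_{\text{\rm std}}}(\xi,\eta,\xi,\eta) + \tfrac14 \, Q([P(\xi_{\mathfrak m}),P(\eta_{\mathfrak m})],[P(\xi_{\mathfrak m}),P(\eta_{\mathfrak m})]) < \varepsilon^2 .\]
Both terms are nonnegative, so each is less than $\varepsilon^2$. We may assume $\varepsilon$ is small, since for $\varepsilon \geq \varepsilon_0$ the conclusion is trivial once $C \geq 1/\varepsilon_0$.

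\emph{Step 1: the product curvature term.} Write $\xi=(\xi_1,\xi_2)$ and $\eta=(\eta_1,\eta_2)$ with $\xi_i,\eta_i \perp p_i$. Then
\[R_{g_{\text{\rm std}}}(\xi,\eta,\xi,\eta) = |\xi_1\wedge\eta_1|^2 + |\xi_2\wedge\eta_2|^2 .\]
Hence $|\xi_i \wedge \eta_i| < \varepsilon$ for $i=1,2$. In the $2$-plane $p_i^\perp$ this means the following: there is a unit vector $e_i \perp p_i$ such that, with $f_i = p_i\wedge e_i$, the components $\langle \xi_i,f_i\rangle$ and $\langle\eta_i,f_i\rangle$ are $O(\varepsilon)$. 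This holds because $\xi_i,\eta_i$ have norm at most $1$ and span a parallelogram of area less than $\varepsilon$. So we can write $\xi_i = x_i e_i + O(\varepsilon)$ and $\eta_i = y_i e_i + O(\varepsilon)$.

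\emph{Step 2: the O'Neill/Cheeger term.} I will identify $P(\xi_{\mathfrak m}) \in \mathfrak{so}(3)\cong\mathbb R^3$ with the element $\omega$ determined by $\langle \omega,\omega'\rangle = g_{\text{\rm std}}(\xi,K_{\omega'})$ for all $\omega'$. This gives
\[P(\xi_{\mathfrak m}) = p_1\wedge\xi_1 + p_2\wedge\xi_2,\]
up to a sign convention that is irrelevant here; checking this against Ziller's normalization is routine. The bracket is the cross product. Using Step 1,
\[P(\xi_{\mathfrak m}) = x_1 f_1 + x_2 f_2 + O(\varepsilon), \qquad P(\eta_{\mathfrak m}) = y_1 f_1 + y_2 f_2 + O(\varepsilon),\]
so
\[[P(\xi_{\mathfrak m}),P(\eta_{\mathfrak m})] = (x_1y_2-x_2y_1)\, f_1\wedge f_2 + O(\varepsilon).\]
Orthonormality of $\xi,\eta$ gives $x_1^2+x_2^2 = 1+O(\varepsilon)$, $y_1^2+y_2^2 = 1+O(\varepsilon)$ and $x_1y_1+x_2y_2 = O(\varepsilon)$. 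Therefore $|x_1y_2-x_2y_1| = 1+O(\varepsilon)$, which is at least $\tfrac12$ for small $\varepsilon$. Since the bracket has norm less than $2\varepsilon$, we conclude $|f_1\wedge f_2| \leq C\varepsilon$. Replacing $e_2$ by $-e_2$ if necessary, this gives $|f_1 - f_2| \leq C\varepsilon$.

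\emph{Step 3: conclusion.} Set $b = f_1$, which is a unit vector. We check the required bounds in turn:
\begin{itemize}
\item $\langle p_1,b\rangle = 0$ exactly.
\item $\langle p_2,b\rangle = \langle p_2,f_2\rangle + O(\varepsilon) = O(\varepsilon)$.
\item $\langle\xi,(b,0)\rangle_{g_{\text{\rm std}}} = \langle \xi_1,f_1\rangle = O(\varepsilon)$ by Step 1.
\item $\langle\xi,(0,b)\rangle_{g_{\text{\rm std}}} = \langle\xi_2,f_2\rangle + O(\varepsilon) = O(\varepsilon)$.
\end{itemize}
The same bounds hold for $\eta$. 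Finally, perturbing $b$ by $O(\varepsilon)$ to make it exactly orthogonal to $p_1$ and $p_2$ yields a two-plane in $\mathcal Z_p$ within distance $C\varepsilon$ of $\mathrm{span}\{\xi,\eta\}$. This is possible when $p_1\neq\pm p_2$. Near the diagonal or antidiagonal $b$ may be taken to be any unit vector orthogonal to $p_1$ within $O(\varepsilon)$ of the given one; in both cases the "in other words" statement follows.

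The main obstacle is Step 2. One must get a bound that is \emph{linear} in $\varepsilon$ from the quadratic quantities, which requires the nondegeneracy $|x_1y_2-x_2y_1|\geq \tfrac12$, and one must correctly match Ziller's $P(\xi_{\mathfrak m})$ with $p_1\wedge\xi_1+p_2\wedge\xi_2$. I would verify the latter first, using the explicit description of $P_p$ and of $\mathcal V_p$ from Section \ref{sec:Mueter.metric}.
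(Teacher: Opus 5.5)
There is a genuine gap in Step 1, and it is exactly the linearity-in-$\varepsilon$ issue you flag only for Step 2. You cannot find a unit $f_i\in p_i^\perp$ with $\langle\xi_i,f_i\rangle,\langle\eta_i,f_i\rangle=O(\varepsilon)$ from $|\xi_i|,|\eta_i|\le 1$ and $|\xi_i\wedge\eta_i|<\varepsilon$ alone. For a counterexample, take $\xi_i=\sqrt{\varepsilon/2}\,a$ and $\eta_i=\sqrt{\varepsilon/2}\,a'$ with $a,a'$ orthonormal in $p_i^\perp$. Then $\langle\xi_i,f\rangle^2+\langle\eta_i,f\rangle^2=\varepsilon/2$ for every unit $f$. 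In general the smaller singular value of the pair $(\xi_i,\eta_i)$ equals $|\xi_i\wedge\eta_i|$ divided by the larger one, so your stated reasons give only $O(\sqrt{\varepsilon})$. Carried through Steps 2 and 3, this yields only $d(\mathrm{span}\{\xi,\eta\},\mathcal{Z})\le C\sqrt{\varepsilon}$, that is, $u^{(0)}\ge\sigma\,d(\cdot,\mathcal{Z})^4$. That is not enough for the quadratic lower bound $u^{(0)}\ge\sigma\,d(\cdot,\mathcal{Z})^2$ required in Section~\ref{framework}.

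What is missing is a lower bound $\max(|\xi_i|,|\eta_i|)\ge c>0$ in each factor. It must come from the orthonormality of $\xi,\eta$ together with the small area in the \emph{other} factor. One way to get it:
\begin{itemize}
\item The Gram matrices $G_i$ of $(\xi_i,\eta_i)$ satisfy $G_1+G_2=I$ and $\det G_i<\varepsilon^2$.
\item If $\mathrm{tr}\,G_1<\tfrac12$, then $G_2>\tfrac12 I$ and $\det G_2>\tfrac14$, a contradiction for small $\varepsilon$. By symmetry the same holds for $G_2$.
\item Hence the larger eigenvalue of each $G_i$ is at least $\tfrac14$ and the smaller is less than $4\varepsilon^2$.
\item Take $f_i$ to be an eigenvector for the small eigenvalue of $v\mapsto\langle\xi_i,v\rangle\xi_i+\langle\eta_i,v\rangle\eta_i$ on $p_i^\perp$. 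This gives Step 1 with genuine $O(\varepsilon)$ errors.
\end{itemize}
The paper gets the same effect differently. It rotates $\xi,\eta$ so that $\langle\xi_2,\eta_2\rangle=\langle\xi_1,\eta_1\rangle=0$ and $|\xi_2|\le|\eta_2|$, then bootstraps: $|\xi_2|^2\le C\varepsilon$, so $|\xi_1|\ge\tfrac12$, so $|\eta_1|\le C\varepsilon$, so $|\eta_2|\ge\tfrac12$, so $|\xi_2|\le C\varepsilon$. It then compares $p_1\wedge\xi_1$ with $p_2\wedge\eta_2$ directly. With this repair your Steps 2 and 3 are correct. They give a slightly more frame-free variant of the paper's argument, with $|x_1y_2-x_2y_1|\ge\tfrac12$ replacing the paper's normalization of the frame.

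Separately, your closing claim fails when $|p_1\wedge p_2|\ll\varepsilon$. You say $b$ can be moved by $O(\varepsilon)$ to be exactly orthogonal to $p_1$ and $p_2$ whenever $p_1\neq\pm p_2$. But the common normal is then forced to be $\pm p_1\wedge p_2/|p_1\wedge p_2|$, which can be far from $b$. For the distance statement in $\mathcal{N}$, move the base point instead. For example, replace $p_2$ by the normalized projection of $p_2$ onto $b^\perp$, which is an $O(\varepsilon)$ change.
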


\textbf{Proof.} 
It suffices to consider the case when $p_1 \neq \pm p_2$. (The general case follows by approximation.) We write $\xi = (\xi_1,\xi_2)$ and $\eta = (\eta_1,\eta_2)$, where $\xi_1,\eta_1 \in T_{p_1} S^2$ and $\xi_2,\eta_2 \in T_{p_2} S^2$. Note that 
\[R_{g_{\text{\rm std}}}(\xi,\eta,\xi,\eta) \leq R_{g_{\text{\rm CM},1}}(C_1^{-1}(\xi),C_1^{-1}(\eta),C_1^{-1}(\xi),C_1^{-1}(\eta)) \leq \varepsilon^2.\] 
This implies 
\begin{equation}
\label{a}
|\xi_1 \wedge \eta_1|^2 + |\xi_2 \wedge \eta_2|^2 \leq C\varepsilon^2. 
\end{equation}
We next observe that 
\begin{align*} 
&\frac{1}{4} \, Q([P(\xi_{\mathfrak{m}}),P(\eta_{\mathfrak{m}})],[P(\xi_{\mathfrak{m}}),P(\eta_{\mathfrak{m}})]) \\ 
&\leq R_{g_{\text{\rm CM},1}}(C_1^{-1}(\xi),C_1^{-1}(\eta),C_1^{-1}(\xi),C_1^{-1}(\eta)) \leq \varepsilon^2 
\end{align*}
at the point $(p_1,p_2)$. This implies 
\[Q(P(\xi_{\mathfrak{m}}),P(\xi_{\mathfrak{m}})) \, Q(P(\eta_{\mathfrak{m}}),P(\eta_{\mathfrak{m}})) - Q(P(\xi_{\mathfrak{m}}),P(\eta_{\mathfrak{m}}))^2 \leq C\varepsilon^2.\] 
Let $\{a_1,a_2,a_3\}$ denote the standard basis of $\mathbb{R}^3$. Let $\{\alpha_1,\alpha_2,\alpha_3\}$ denote the corresponding basis of $\mathfrak{so}(3)$, so that $\alpha_k^*|_{(p_1,p_2)} = (a_k \wedge p_1,a_k \wedge p_2) \in T_{(p_1,p_2)} M$. Then 
\begin{align*} 
&\Big ( \sum_{k=1}^3 Q(P(\xi_{\mathfrak{m}}),\alpha_k)^2 \Big ) \, \Big ( \sum_{k=1}^3 Q(P(\eta_{\mathfrak{m}}),\alpha_k)^2 \Big ) \\ 
&- \Big ( \sum_{k=1}^3 Q(P(\xi_{\mathfrak{m}}),\alpha_k) \, Q(P(\eta_{\mathfrak{m}}),\alpha_k) \Big )^2 \leq C\varepsilon^2. 
\end{align*}
Using the definition of $P$ in \cite{Ziller}, we obtain 
\begin{align*} 
Q(P(\xi_{\mathfrak{m}}),\alpha_k) 
&= \langle \xi_{\mathfrak{m}}^*,\alpha_k^* \rangle_{g_{\text{\rm std}}} \\ 
&= \langle \xi,\alpha_k^* \rangle_{g_{\text{\rm std}}} \\ 
&= \langle \xi,(a_k \wedge p_1,a_k \wedge p_2) \rangle_{g_{\text{\rm std}}} \\ 
&= \langle \xi_1,a_k \wedge p_1 \rangle + \langle \xi_2,a_k \wedge p_2 \rangle \\ 
&= \langle p_1 \wedge \xi_1 + p_2 \wedge \xi_2,a_k \rangle 
\end{align*}
and 
\begin{align*} 
Q(P(\eta_{\mathfrak{m}}),\alpha_k) 
&= \langle \eta_{\mathfrak{m}}^*,\alpha_k^* \rangle_{g_{\text{\rm std}}} \\ 
&= \langle \eta,\alpha_k^* \rangle_{g_{\text{\rm std}}} \\ 
&= \langle \eta,(a_k \wedge p_1,a_k \wedge p_2) \rangle_{g_{\text{\rm std}}} \\ 
&= \langle \eta_1,a_k \wedge p_1 \rangle + \langle \eta_2,a_k \wedge p_2 \rangle \\ 
&= \langle p_1 \wedge \eta_1 + p_2 \wedge \eta_2,a_k \rangle. 
\end{align*} 
Thus, 
\begin{align*} 
&|p_1 \wedge \xi_1 + p_2 \wedge \xi_2|^2 \, |p_1 \wedge \eta_1 + p_2 \wedge \eta_2|^2 \\ 
&- \langle p_1 \wedge \xi_1 + p_2 \wedge \xi_2,p_1 \wedge \eta_1 + p_2 \wedge \eta_2 \rangle^2 \leq C\varepsilon^2. 
\end{align*}
This inequality can be rewritten as 
\begin{equation} 
\label{b}
|(p_1 \wedge \xi_1 + p_2 \wedge \xi_2) \wedge (p_1 \wedge \eta_1 + p_2 \wedge \eta_2)| \leq C\varepsilon. 
\end{equation} 
Recall that $\xi$ and $\eta$ are $g_{\text{\rm std}}$-orthonormal. Without loss of generality, we may assume that $\langle \xi_2,\eta_2 \rangle = 0$. (Otherwise, we replace $\xi$ by $\tilde{\xi} = \cos \tau \, \xi + \sin \tau \, \eta$ and $\eta$ by $\tilde{\eta} = -\sin \tau \, \xi + \cos \tau \, \eta$ for a suitable real number $\tau$.) Since $\langle \xi,\eta \rangle_{g_{\text{\rm std}}} = 0$, it follows that $\langle \xi_1,\eta_1 \rangle = 0$. Moreover, we may assume that $|\xi_2| \leq |\eta_2|$. (Otherwise, we switch the roles of $\xi$ and $\eta$.)

Using (\ref{a}), we obtain $|\xi_1| \, |\eta_1| = |\xi_1 \wedge \eta_1| \leq C\varepsilon$ and $|\xi_2| \, |\eta_2| = |\xi_2 \wedge \eta_2| \leq C\varepsilon$. Since $|\xi_2| \leq |\eta_2|$, it follows that $|\xi_2|^2 \leq C\varepsilon$. Since $|\xi|_{g_{\text{\rm std}}} = 1$, we conclude that $|\xi_1| \geq \frac{1}{2}$, provided that $\varepsilon$ is sufficiently small. This implies $|\eta_1| \leq C\varepsilon$.  Since $|\eta|_{g_{\text{\rm std}}} = 1$, we deduce that $|\eta_2| \geq \frac{1}{2}$, provided that $\varepsilon$ is sufficiently small. This finally implies $|\xi_2| \leq C\varepsilon$. 

Using (\ref{b}) together with the estimates $|\xi_2| \leq C\varepsilon$ and $|\eta_1| \leq C\varepsilon$, we obtain 
\begin{equation}
\label{d}
|(p_1 \wedge \xi_1) \wedge (p_2 \wedge \eta_2)| \leq C\varepsilon. 
\end{equation}
Since $\langle p_1,\xi_1 \rangle = \langle p_2,\xi_2 \rangle = 0$, we have 
\[|p_1 \wedge \xi_1|^2 = |\xi_1|^2 \geq \frac{1}{4}\] 
and 
\[|p_2 \wedge \eta_2|^2 = |\eta_2|^2 \geq \frac{1}{4}.\]
Using (\ref{d}), we conclude that 
\[\min \Big \{ \Big | \frac{p_1 \wedge \xi_1}{|p_1 \wedge \xi_1|} + \frac{p_2 \wedge \eta_2}{|p_2 \wedge \eta_2|} \Big |,\Big | \frac{p_1 \wedge \xi_1}{|p_1 \wedge \xi_1|} - \frac{p_2 \wedge \eta_2}{|p_2 \wedge \eta_2|} \Big | \Big \} \leq C\varepsilon.\] 
Let 
\[b = \frac{p_1 \wedge \xi_1}{|p_1 \wedge \xi_1|}.\] 
Then $\langle b,p_1 \rangle = \langle b,\xi_1 \rangle = 0$. Moreover, since 
\[\min \Big \{ \Big | b + \frac{p_2 \wedge \eta_2}{|p_2 \wedge \eta_2|} \Big |,\Big | b - \frac{p_2 \wedge \eta_2}{|p_2 \wedge \eta_2|} \Big | \Big \} \leq C\varepsilon,\] 
we obtain $|\langle b,p_2 \rangle| \leq C\varepsilon$ and $|\langle b,\eta_2 \rangle| \leq C\varepsilon$. Finally, $|\langle b,\xi_2 \rangle| \leq |\xi_2| \leq C\varepsilon$ and $|\langle b,\eta_1 \rangle| \leq |\eta_1| \leq C\varepsilon$. This completes the proof of Proposition \ref{lower.bound.for.sectional.curvature.of.Mueter.metrics}. \\

\begin{corollary} 
\label{lower.bound.for.sectional.curvature.of.Mueter.metrics.2}
Let $p = (p_1,p_2)$ be a point in $M$. Suppose that $\xi,\eta \in T_p M$ are two vectors with the property that $C_1\xi,C_1\eta$ are $g_{\text{\rm std}}$-orthonormal and 
\[R_{g_{\text{\rm CM},1}}(\xi,\eta,\xi,\eta) < \varepsilon^2.\] 
Then the two-plane $\text{\rm span}\{\xi,\eta\}$ has distance at most $C\varepsilon$ from the set $\mathcal{Z}$.
\end{corollary}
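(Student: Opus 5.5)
We will reduce Corollary \ref{lower.bound.for.sectional.curvature.of.Mueter.metrics.2} to Proposition \ref{lower.bound.for.sectional.curvature.of.Mueter.metrics} by the substitution $\xi' = C_1\xi$, $\eta' = C_1\eta$. By hypothesis $\xi',\eta'$ are $g_{\text{\rm std}}$-orthonormal vectors in $T_pM$. Moreover $C_1^{-1}(\xi') = \xi$ and $C_1^{-1}(\eta') = \eta$, so
\[R_{g_{\text{\rm CM},1}}(C_1^{-1}(\xi'),C_1^{-1}(\eta'),C_1^{-1}(\xi'),C_1^{-1}(\eta')) = R_{g_{\text{\rm CM},1}}(\xi,\eta,\xi,\eta) < \varepsilon^2.\]
Proposition \ref{lower.bound.for.sectional.curvature.of.Mueter.metrics} then shows that the two-plane $\text{\rm span}\{C_1\xi,C_1\eta\}$ lies within distance $C\varepsilon$ of $\mathcal{Z}$.

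It remains to pass from $\text{\rm span}\{C_1\xi,C_1\eta\}$ back to $\text{\rm span}\{\xi,\eta\}$; this is the only genuine step. The map $\pi \mapsto C_1^{-1}(\pi)$ is a smooth diffeomorphism of $\mathcal{N}$. It is induced by the smooth bundle endomorphism $C_1 = (I+P_p)^{-1}$ on $\mathcal{V}$ and the identity on $\mathcal{H}$, which is invertible everywhere with uniformly bounded inverse, and since $M$ is compact the induced map is uniformly Lipschitz. Hence it suffices to show that $C_1^{-1}$ maps $\mathcal{Z}$ into $\mathcal{Z}$.

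To see this, take $\pi = \text{\rm span}\{(b,0),(0,b)\}^\perp \in \mathcal{Z}_p$ with $\langle p_1,b\rangle = \langle p_2,b\rangle = 0$. The elements of $\mathcal{Z}$ are exactly the two-planes of zero $g_{\text{\rm CM},1}$-curvature, and $\pi$ has zero $g_{\text{\rm std}}$-curvature. If $\pi$ is preserved by $C_1$, we are done. This holds in the nondegenerate case: there $b = \pm q_3$ and $\pi = \text{\rm span}\{E_3,\partial_\theta\}$. Here $E_3$ is an eigenvector of $P_p$, so $C_1 E_3 = \frac{1}{3}E_3$, and $\partial_\theta$ is $g_{\text{\rm std}}$-orthogonal to $E_1,E_2,E_3$, hence horizontal, so $C_1\partial_\theta = \partial_\theta$. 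The degenerate case follows by density and continuity, since $\mathcal{Z}_{\text{\rm nondeg}}$ is dense in $\mathcal{Z}$.

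The main point of care is this last invariance claim. If one prefers to avoid it, the alternative is to run the argument directly: choose $z$ close to $\mathcal{Z}$ with $d(\text{\rm span}\{C_1\xi,C_1\eta\},z)\le C\varepsilon$, and note that $d(\text{\rm span}\{\xi,\eta\},C_1^{-1}z)\le L\,C\varepsilon$ with $C_1^{-1}z\in\mathcal{Z}$ as above. For $\varepsilon$ not small the statement is trivial by compactness of $\mathcal{N}$, after enlarging $C$.
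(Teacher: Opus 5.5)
Your proposal is correct and follows the paper's proof: you apply Proposition \ref{lower.bound.for.sectional.curvature.of.Mueter.metrics} to $C_1\xi, C_1\eta$ and then use that $C_1$ maps each two-plane in $\mathcal{Z}$ to itself. Where the paper simply asserts these facts, you spell out the invariance check ($C_1 E_3 = \frac{1}{3}E_3$, $\partial_\theta$ horizontal, density of $\mathcal{Z}_{\text{\rm nondeg}}$ for the degenerate points) and the uniform Lipschitz bound for the induced map $\pi \mapsto C_1^{-1}(\pi)$ on the compact manifold $\mathcal{N}$.
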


\textbf{Proof.}
By Proposition \ref{lower.bound.for.sectional.curvature.of.Mueter.metrics}, the two-plane $\text{\rm span}\{C_1 \xi,C_1 \eta\}$ has distance at most $C\varepsilon$ from the set $\mathcal{Z}$. Moreover, $C_1$ maps each two-plane $\pi \in \mathcal{Z}$ to itself. Putting these facts together, the assertion follows. This completes the proof of Corollary \ref{lower.bound.for.sectional.curvature.of.Mueter.metrics.2}. \\


\begin{thebibliography}{9}
\bibitem{Bourguignon}
J.P.~Bourguignon, \textit{Some constructions related to H.~Hopf's conjecture on product manifolds,} Proc. Sympos. Pure Math. 27, 33--37 (1975)

\bibitem{Bourguignon-Deschamps-Sentenac}
J.P.~Bourguignon, A.~Deschamps, P.~Sentenac, \textit{Conjecture de H.~Hopf sur les produits de vari\'et\'es,} Ann. Sci. \'Ecole Norm. Sup. 5, 277--302 (1972)

\bibitem{Cheeger}
J.~Cheeger, \textit{Some examples of manifolds with nonnegative curvature,} J. Diff. Geom. 8, 623--628 (1973)

\bibitem{Hamilton}
R.~Hamilton, \textit{Three-manifolds with positive Ricci curvature,} J. Diff. Geom. 17, 255--306 (1982)

\bibitem{Hsiang-Kleiner}
W.-Y.~Hsiang and B.~Kleiner, \textit{On the topology of positively curved $4$-manifolds with symmetry,} J. Diff. Geom. 29, 615--621 (1989)

\bibitem{Mueter}
M.~M\"uter, \textit{Kr\"ummungserh\"ohende Deformationen mittels Gruppenaktionen,} Dissertation, Universit\"at M\"unster (1987)

\bibitem{Ziller}
W.~Ziller, \textit{On M.~Mueter's Ph.D. Thesis on Cheeger deformations,} arXiv:0909.0161
\end{thebibliography}
\end{document}